\documentclass[12pt,reqno]{amsart}
\usepackage{mathtools}
\usepackage{amssymb, mathabx, enumitem}
\usepackage{color}
\usepackage[colorlinks=true,linkcolor=blue,urlcolor=blue,citecolor=blue, pagebackref]{hyperref}
\usepackage{tikz-cd}
\usepackage{tikz}
\usepackage{tikz-3dplot}
\usepackage{graphics}
\usepackage{arydshln}
\usepackage{graphicx}%
\usepackage{multirow}%
\usepackage{amsmath,amssymb,amsfonts}%
\usepackage{amsthm}%
\usepackage{mathrsfs}%
\usepackage[title]{appendix}%
\usepackage{xcolor}%
\usepackage{textcomp}%
\usepackage{manyfoot}%
\usepackage{booktabs}%
\usepackage{algorithm}%
\usepackage{algorithmicx}%
\usepackage{algpseudocode}%
\usepackage{listings}%
\usepackage{tikz-cd}
\usepackage{mathtools}
\usepackage[numbers]{natbib}

\usepackage[margin=1in]{geometry}

\newcommand{\C}{\mathbb{C}}

\newcommand{\Z}{\mathbb{Z}}

\newcommand{\mf[1]}{\mathfrak{#1}}

\newcommand{\Gr}{\mathrm{Gr}}
\newcommand{\Gm}{\mathbb{G}_m}
\newcommand{\bT}{\mathbb{T}}
\newcommand{\Gmrot}{\Gm^{\mathrm{rot}}}
\newcommand{\cW}{W}
\newcommand{\cL}{\mathcal{L}}
\newcommand{\Spec}{\operatorname{Spec}}

\newcommand{\coh}{\mathrm{coh}}
\newcommand{\qcoh}{\mathrm{qcoh}}

\numberwithin{equation}{section}

\newtheorem{theorem}{Theorem}[section]

\newtheorem{remark}[theorem]{Remark}

\newtheorem{corollary}[theorem]{Corollary}

\newtheorem{proposition}[theorem]{Proposition}

\newtheorem{lemma}[theorem]{Lemma}

\newtheorem{definition}[theorem]{Definition}
\newtheorem{definition/lemma}[theorem]{Definition/Lemma}

\newtheorem{example}[theorem]{Example}
\newtheorem{notation}[theorem]{Notation}

\title{Derived Enhancements of T-Fixed Subschemes}
\author{Marc Besson}
\address{YMSC, Tsinghua University, Beijing, P.R. China, 100871}
\email{bessonm@tsinghua.edu.cn}
\author{Shiyixin Liang}
\address{YMSC, Tsinghua University, Beijing, P.R. China, 100871}
\email{liangsyx21@mails.tsinghua.edu.cn}

\begin{document}
\maketitle
\begin{abstract} 
For $X$ a conical affine symplectic singularity with $\mathbb{T}=T \times \mathbb{G}_m$-action, the fixed scheme $X^T$ and the map $X^T \rightarrow X$ carry much information about the geometry of $X$. In general, $X^T \rightarrow X$ fails to be a complete intersection. Thus, we study a derived intersection whose classical locus is the $T$-fixed subscheme $X^T$. We show that the structure of the symplectic singularity on $X$ produces a duality theorem for the structure sheaf of the derived intersection. The duality theorem allows us to study the structure of such derived intersections; in particular we describe their cohomological amplitude. 

An important source of symplectic singularities with $\mathbb{T}$-action are affine Grassmannian slices $\overline{W}^{\lambda}_{\mu}$. We pay particular attention to these slices when $G=\mathrm{SL}_{n+1}$, and we use the previously developed theory to characterize when $(\overline{W}^{\lambda}_{\mu})^T \rightarrow \overline{W}^{\lambda}_{\mu}$ is a complete intersection.
\end{abstract}

\section{Introduction}\label{Sec1}

\subsection{Conical symplectic singularities}
Conical symplectic singularities are emerging as an important class of spaces in geometric representation theory and mathematical physics. Let $X=\Spec(A)$ be an affine scheme with symplectic singularity. We always assume that $X$ carries an action of an algebraic torus $T \times \mathbb{G}_{m}=\mathbb{T}$, such that the $\mathbb{G}_m$ action repels $X$ from a point which will be denoted by $0$. We write $X^{\mathbb{T}}$ for the $\mathbb{T}$-fixed subscheme of $X$, and we  further assume that the $\mathbb{T}$-fixed subscheme $X^{\mathbb{T}}$ is $0$ with the reduced scheme structure. We write $X^T$ for the fixed subscheme of $X$ under the smaller torus $T$. In this work, we study the situation where $X^T$ is a possibly nonreduced scheme supported over the point $0=X^{\mathbb{T}}$. The nonreduced structure on $X^T$, as well as the embedding $X^T \rightarrow X$, carry important information about singularities of $X$; moreover the singularities of $X$ often have representation-theoretic significance.
In many cases of interest, 
\[X^T \rightarrow X\]
is not a complete intersection, or in other words, the ideal $I^T(X)$ cutting out $X^T$ in $X$ is not generated by a regular sequence. From the point of view of derived geometry, when a closed subscheme $Y \rightarrow X$ fails to be a complete intersection, there is a refined object one may consider, namely the derived intersection. This derived scheme has structure complex which can be represented by the Koszul complex of generators of $I^T(X)$.

In our setting of $X^T \rightarrow X$, we study $T$-homogeneous minimal generating sets $\{x_1, \dots x_n\}$ of $I^T(X)$, and study the attendent propertes of the Koszul complexes $K_A^{\bullet}(x_1, \dots x_n)$. In other words, we study the geometry of derived intersections $\mathbb{V}^{\bullet}(x_1, \dots x_n)$ whose classical truncation is $X^T$. 
Our main results are as follows. In section \ref{Sec3}, we first describe the minimal number of homogeneous generators. 
\begin{proposition}(Proposition \ref{prop:basis} in the text)
Any homogeneous basis of $(T^*_0X)^{\neq 0}$ may be lifted to a minimal homogeneous generating set of $I^T(X)$.
\end{proposition}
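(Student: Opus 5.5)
Throughout, $A=\bigoplus_{k\ge 0}A_k$ is graded by the repelling $\mathbb{G}_m$, with $A_0=\C$ because $X^{\mathbb{T}}=\{0\}$ is reduced. Let $\mf{m}=A_{>0}$, so that $T^*_0X=\mf{m}/\mf{m}^2$. Since $T$ commutes with $\mathbb{G}_m$, we also have a $T$-weight decomposition $A=\bigoplus_{\chi}A^{\chi}$, and $I^T(X)$ is the ideal generated by $\bigoplus_{\chi\neq 0}A^{\chi}$. The plan is to reduce the statement to a graded Nakayama argument for the $\mathbb{T}$-homogeneous ideal $I=I^T(X)$. The key identification is
\[
I/\mf{m}I \;\cong\; (\mf{m}/\mf{m}^2)^{\neq 0}=(T^*_0X)^{\neq 0}.
\]
Once this holds, graded Nakayama says that homogeneous elements of $I$ generate $I$ minimally exactly when their images form a basis of $I/\mf{m}I$. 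Since this isomorphism is $\mathbb{T}$-equivariant, any homogeneous basis of $(T^*_0X)^{\neq0}$ lifts to homogeneous elements of $I$ with these images, and these lifts are a minimal homogeneous generating set.

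\textbf{Step 1: $I\cap \mf{m}^2 = \mf{m}I$.} The inclusion $\mf{m}I\subseteq \mf{m}^2$ holds because $I\subseteq\mf{m}$. For the reverse, take $f\in I\cap\mf{m}^2$ of a fixed nonzero $T$-weight $\chi$. Write $f=\sum a_ib_i$ with $a_i,b_i\in\mf{m}$ homogeneous, and keep only the terms with $\mathrm{wt}(a_i)+\mathrm{wt}(b_i)=\chi$. In each surviving term at least one of $\mathrm{wt}(a_i),\mathrm{wt}(b_i)$ is nonzero, so that factor lies in $I$ and the term lies in $\mf{m}I$.

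Weight-zero elements of $I$ need separate treatment. A weight-zero element of $I$ is a sum of products $gh$ with $h$ of weight $\chi\neq0$, hence $h\in I\cap\mf{m}$. If $g\in\mf{m}$, then $gh\in\mf{m}I$. Otherwise, after subtracting the constant part of $g$, we reduce to $h$ itself, which has weight $\chi\neq0$ and so is not weight zero, a contradiction. Hence every weight-zero element of $I$ already lies in $\mf{m}I$. I expect this bookkeeping of weights inside products to be the main obstacle, mostly notational. The conclusion is $(I/\mf{m}I)^{0}=0$ and $(I/\mf{m}I)^{\chi}\cong (I\cap\mf{m})^{\chi}/(\mf{m}^2)^{\chi}$ for $\chi\neq 0$.

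\textbf{Step 2: the image of $I\to \mf{m}/\mf{m}^2$ is exactly the nonzero-weight part.} First, every element of $\mf{m}^{\chi}$ with $\chi\ne0$ lies in $I$ by definition of $I$, so the image contains $(\mf{m}/\mf{m}^2)^{\neq0}$. Second, by Step 1 the weight-zero part of $I$ lies in $\mf{m}I\subseteq\mf{m}^2$, so the image has nothing in weight zero. Combined with Step 1, this gives $I/\mf{m}I\cong (\mf{m}/\mf{m}^2)^{\neq0}$.

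\textbf{Step 3: Nakayama.} Apply graded Nakayama for the non-negatively graded ring $A$ with $A_0=\C$ and the finitely generated graded module $I$, which is legitimate since the $\mathbb{G}_m$-grading is positive on $\mf{m}$. A set of homogeneous elements of $I$ generates $I$ if and only if its image spans $I/\mf{m}I$, and it is minimal if and only if the image is a basis. Finally, given a homogeneous basis $\{v_j\}$ of $(T^*_0X)^{\neq0}$, choose for each $j$ a homogeneous lift $x_j\in\mf{m}^{\chi_j}$ of the same bidegree. Each $x_j$ lies in $I$ because $\chi_j\neq0$, and by Step 2 the $x_j$ map to the basis $\{v_j\}$ of $I/\mf{m}I$. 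By Nakayama, the $x_j$ form a minimal homogeneous generating set of $I^T(X)$.
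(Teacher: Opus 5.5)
Your proposal is correct and follows the paper's route. Your Steps 1--2 reprove Proposition \ref{prop:generator}, the isomorphism $I^T(A)/\mathfrak{m}_A I^T(A)\simeq(\mathfrak{m}_A/\mathfrak{m}_A^2)^{\neq 0}$, by the same weight-zero/nonzero-weight bookkeeping, and Step 3 is the positively graded Nakayama argument the paper cites for Proposition \ref{prop:basis}.

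Two cosmetic points. First, $A_0=\mathbb{C}$ is a standing hypothesis, not a consequence of $X^{\mathbb{T}}$ being reduced. Second, the weight-zero case is cleaner if you take each $g$ homogeneous and project onto weight zero; then every surviving $g$ has weight $-\chi\neq 0$ and so lies in $\mathfrak{m}$, and no ``contradiction'' is needed.
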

Here, since $0$ is fixed by $T$, the cotangent space $T^*_0X$ affords a representation of $T$ and $(T^*_0X)^{\neq 0}$ is the direct sum of the nontrivial weight subspaces. In light of this proposition, the cardinality $n$ of a minimal homogeneous generating set of $I^T(X)$ is equal to the dimension $\dim (T_0^*X)^{\neq 0}$.

 The symplectic singularities of $X$ impose a very strong relationship between the structure complex $\mathscr{O}_{\mathbb{V}^{\bullet}}$ and the dualizing complex $\omega_{\mathbb{V}^{\bullet}}$ of the derived intersection $\mathbb{V}^{\bullet}(x_1, \dots x_n)$. This leads to our second main result, which is a bigraded (homologically graded and $\mathbb{T}$-graded) Poincar\'e duality. We use the shorthand $\underline{x}=\{x_1, \dots x_n\}$ for a minimal homogeneous generating set, and we write $K_A^{\bullet}( \underline{x})$ for the Koszul complex on the chosen minimal generating set. Further, we write $d_X$ for $\dim X$ and write $\varpi_r$ for the standard character of $\mathbb{G}_m$; given an object $\mathscr{F} \in D_{\coh}^{\mathbb{T}}(X)$, we write $\mathscr{F}(\lambda+k\varpi_{r}):=\mathscr{F} \otimes_{\mathbb{C}} \mathbb{C}_{\lambda+k \varpi_r}$ for twisting by the $\mathbb{T}$-character $\lambda+k\varpi_r$. In section \ref{Sec5}, we prove the following duality result.

\begin{theorem} (Theorem \ref{thm2} and Corollary \ref{pcare} and in the text)

 We have $\mathbb{T}$-equivariant isomorphisms \[H^{-i}(K_A^{\bullet}(\underline{x}))^* \simeq H^{d_X -n+i}(K_A^{\bullet}(\underline{x})) \left(\mathrm{wt}(\mathrm{det}(T_0X)^{\neq 0})+ \frac{d_X}{2} \varpi_r \right)\] for all $i \in \mathbb{Z}$.
\end{theorem}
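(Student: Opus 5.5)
The approach is Grothendieck duality for the finite map $\mathbb{V}^\bullet \to X$, combined with the fact that $A$ is Gorenstein with an explicitly computable equivariant canonical module. A symplectic singularity is normal with rational singularities, so it is Cohen--Macaulay. The symplectic form $\omega$ on the smooth locus, of $\mathbb{G}_m$-weight $\ell$, gives a nowhere-vanishing top form $\omega^{d_X/2}$. Hence $\omega_A \simeq A$, equivariantly up to a twist by the weight of $\omega^{d_X/2}$, and $\omega_A^\bullet \simeq A[d_X](\text{twist})$. I will normalize the $\mathbb{G}_m$-action, as the paper's $\varpi_r$ convention suggests, so that this twist reads $\frac{d_X}{2}\varpi_r$.

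The Koszul complex $K=K_A^\bullet(\underline{x})$ is a perfect $A$-complex, and it is self-dual:
\[
\mathrm{Hom}_A(K,A) \simeq K[-n]\otimes \det(V)^{-1}, \qquad V=\bigoplus_i A(-\mathrm{wt}\,x_i).
\]
By Proposition \ref{prop:basis}, the weights of the $x_i$ are exactly the $T$-weights of $(T_0^*X)^{\neq 0}$. Hence $\det V$ is the character $\mathrm{wt}\det(T_0^*X)^{\neq0}$, and its inverse is $\mathrm{wt}(\det(T_0X)^{\neq 0})$. Note that the $x_i$ can be taken $\mathbb{G}_m$-homogeneous too, and their $\mathbb{G}_m$-weights must be accounted for as well. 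I expect these to cancel or to be absorbed: $\mathbb{T}$-weights of $T_0X$ include $\mathbb{G}_m$ parts, and the statement's $\det(T_0X)^{\neq0}$ should be read $\mathbb{T}$-equivariantly. Combining the two facts gives
\[
R\mathrm{Hom}_A(K,\omega_A^\bullet) \simeq K[d_X-n]\bigl(\mathrm{wt}\det(T_0X)^{\neq0}+\tfrac{d_X}{2}\varpi_r\bigr).
\]

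Next we pass to cohomology. The cohomology modules $H^j(K)$ are finitely generated modules supported on $X^T$, which is set-theoretically the point $0$, so they are finite-dimensional. Local duality on $A$, which is graded with $A_0=\mathbb{C}$ by conicality and Cohen--Macaulay of dimension $d_X$, gives a spectral sequence
\[
\mathrm{Ext}^p_A(H^{-q}(K),\omega_A^\bullet) \Rightarrow H^{p+q}R\mathrm{Hom}_A(K,\omega_A^\bullet).
\]
For a finite-length module $M$, $\mathrm{Ext}^p_A(M,\omega_A^\bullet)$ vanishes unless $p=0$, and in degree $0$ it equals the graded Matlis dual $M^*$. Here we use that $\omega^\bullet_A$ is normalized so that finite-length modules are concentrated in degree $0$; with $\omega_A^\bullet = \omega_A[d_X]$ this is $\mathrm{Ext}^{d_X}(M,\omega_A)=M^\vee$. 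The spectral sequence therefore collapses, giving $H^{-i}(K)^*\simeq H^{i}R\mathrm{Hom}(K,\omega_A^\bullet)$. Comparing with the self-duality and reindexing gives $H^{-i}(K)^*\simeq H^{d_X-n+i}(K)(\text{twist})$. The indexing needs care, but the shift $d_X-n$ is forced.

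The main obstacle is pinning down the equivariant twist exactly. Three contributions have to be tracked with correct signs:
\begin{enumerate}
\item the weight of the volume form $\omega^{d_X/2}$ under $\mathbb{T}$: the $T$-invariance of $\omega$ must be checked, and $T$ preserves $\omega$ since it acts hamiltonianly;
\item the Koszul determinant, identified through Proposition \ref{prop:basis};
\item the sign convention in the graded Matlis dual.
\end{enumerate}
I would verify the combined formula on a toy case such as $X=\mathbb{C}^2$ with its standard $\mathbb{T}$-action, where $K$ is a regular Koszul complex and both sides are explicit.
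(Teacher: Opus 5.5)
Your proposal is correct and follows the paper's proof step for step. The paper likewise trivializes $\omega_X\simeq\mathscr{O}_X(\frac{d_X}{2}\varpi_r)[d_X]$ using $\sigma^{d_X/2}$; this needs normality and Hartogs' lemma, which your ``hence $\omega_A\simeq A$'' uses tacitly. It then computes $\omega_{\mathbb{V}^\bullet}=i^!\omega_X$ by base change from $i_0^!$ on $\mathbb{A}^n\simeq(T_0X)^{\neq 0}$, which is the geometric form of your Koszul self-duality since $i_*i^!\omega_X=R\mathrm{Hom}_A(K,\omega_X)$. Finally it uses $g'_*\mathbb{D}_{\mathbb{V}^\bullet}\simeq\mathbb{D}_{pt}g'_*$ for the finite map $g'$, which is what your collapsing local-duality spectral sequence establishes by hand.

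Your item 3 causes no trouble. The graded canonical module normalized so that local duality is twist-free is exactly $H^{-d_X}$ of $\omega_X=\pi^!\mathbb{C}$, and this restricts $\mathbb{T}$-equivariantly to $\Omega^{d_X}_{X_{reg}}$. Also, the $\mathbb{G}_m^{\mathrm{rot}}$-weights of the $x_i$ do not cancel; they are simply part of the $\mathbb{T}$-character $\mathrm{wt}\det(T_0X)^{\neq 0}$, and your indexing then gives exactly the stated isomorphism.
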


\subsection{Slices in Type A}
In the second part, we apply these results to produce a number of examples in a more specific setting. We consider $T \rightarrow G$ as the maximal torus of a reductive group, and we consider (spherical) affine Schubert varieties $\overline{\mathrm{Gr}}^{\lambda}_G$ associated to a cocharacter $\lambda$ of $T$. The $T$-fixed scheme $(\overline{\mathrm{Gr}}^{\lambda}_G )^T$ is of dimension $0$ and the components are indexed by a finite subset of cocharacters $\mu$. For each such $\mu$, one can define the slice $\overline{W}_{\mu}^{\lambda} \rightarrow \overline{\mathrm{Gr}}^{\lambda}_G$, transverse to $\overline{\mathrm{Gr}}^{\mu}$. These slices $\overline{W}^{\lambda}_{\mu}$ are conical affine symplectic singularities, carrying an action of $\mathbb{T}=T \times \mathbb{G}_m$. When $G=SL_{n+1}$, the descriptions of minimal number of generators and the structure of the derived intersections $\mathbb{V}^{\bullet}(I^T(X))$ are rather explicit and combinatorial. After some background material in Sections \ref{Sec6} and \ref{genminors}, we produce explicit formulas for minimal numbers of generators of $I^T(\overline{W}^{\lambda}_{0})$ as well as the cohomological amplitude of $\mathbb{V}^{\bullet}(I^T(\overline{W}^{\lambda}_0))$, in the setting where $G= SL_{n+1}$. In section \ref{examples} we present many examples. In section \ref{globaltheorems} we prove our last main result.
Let $\varpi_i^{\vee}$ be the fundamental coweights; so $\langle \varpi^{\vee}_i, \alpha_j \rangle = \delta_{ij}$.
\begin{theorem}(Theorem \ref{lcicriterion} in the text)
Let $\overline{\mathrm{Gr}}^{\lambda}$ be an affine Schubert variety for $G=SL_{n+1}$, with $\lambda$ a dominant coweight. Then 
\[(\overline{\mathrm{Gr}}^{\lambda})^T \rightarrow \overline{\mathrm{Gr}}^{\lambda}\] is a local complete intersection if and only if $\lambda = (n+1)  k \varpi_1^{\vee}$ or $ \lambda = (n+1)k \varpi_n^{\vee}$ for $k \in \mathbb{N}$.
\end{theorem}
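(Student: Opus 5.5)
We reduce the global statement to a local one at each $T$-fixed point, and then use the Koszul/duality machinery of Sections \ref{Sec3} and \ref{Sec5}. The $T$-fixed points of $\overline{\mathrm{Gr}}^{\lambda}$ are the points $t^{\mu}$ with $\mu$ in the Weyl orbits of dominant $\mu \le \lambda$. Being a local complete intersection is a local property. Using the $G(\mathcal{O})$-action, which normalizes $T$ up to the Weyl group, together with the product decomposition near $t^{w\mu}$,
\[\text{(open cell of } \mathrm{Gr}^{\mu}) \times \overline{W}^{\lambda}_{\mu},\]
the problem becomes: $(\overline{\mathrm{Gr}}^{\lambda})^T \rightarrow \overline{\mathrm{Gr}}^{\lambda}$ is lci if and only if $(\overline{W}^{\lambda}_{\mu})^T \rightarrow \overline{W}^{\lambda}_{\mu}$ is lci for every dominant $\mu \le \lambda$. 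The smooth factor $\mathrm{Gr}^{\mu}$ contributes only nonzero $T$-weights and is itself lci at its isolated fixed point. In the slice, lci at $0$ means the Koszul complex $K^{\bullet}_A(\underline{x})$ on a minimal homogeneous generating set is a resolution. By Proposition \ref{prop:basis}, $n=\dim (T_0^*X)^{\neq 0}$. So lci holds exactly when $H^{-i}(K^\bullet)=0$ for $i>0$. Since $X^T$ is zero-dimensional, this is equivalent to $n = d_X$, i.e. $(T_0^*X)^{0}=0$ together with the embedding dimension equalling $\dim X$.

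Next we use Theorem \ref{thm2}. It gives $H^{-i}(K)^*\simeq H^{d_X-n+i}(K)$, so the cohomological amplitude of $K$ is exactly $[-(n-d_X),0]$. Since $H^0(K)=\mathscr{O}_{X^T}\neq 0$, Koszul acyclicity is equivalent to the numerical condition $n=d_X$. Now $n \ge d_X$ always holds, because $X^T$ is $0$-dimensional and the $x_i$ cut it out. Hence lci holds if and only if $\overline{W}^{\lambda}_{\mu}$ is smooth at $0$, meaning the tangent space has dimension $d_X$, and $T$ acts on $T_0\overline{W}^\lambda_\mu$ with no zero weights. The second condition is automatic once the space is smooth, since smoothness makes $X$ equivariantly an affine space and $X^{\mathbb{T}}$ is reduced. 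One check is needed here: that $n\ge d_X$, with equality forcing $X$ regular. This is the standard fact that if $d_X$ elements generate an $\mathfrak{m}$-primary ideal, they do so with embedding dimension at least $n$.

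It then remains to find for which $\lambda$ every slice $\overline{W}^{\lambda}_{\mu}$ with dominant $\mu\le\lambda$ is smooth, or more weakly has its Koszul complex acyclic. Equivalently, we need to know when $\overline{\mathrm{Gr}}^\lambda$ is smooth away from being ``fixed-point lci''. Here we use the explicit type-A description from Sections \ref{Sec6}–\ref{genminors}: lattice models and generalized minors. The minimal-generator formula for $I^T(\overline{W}^{\lambda}_0)$ computes $n$, to be compared with $d_X=\langle 2\rho,\lambda\rangle$. It is known that $\overline{\mathrm{Gr}}^\lambda$ is singular unless $\lambda$ is minuscule, so the condition cannot literally be smoothness. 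The failure of the heuristic must therefore sit in the assumption $X^{\mathbb{T}}$-reducedness/weights at non-$0$ fixed points. In that case I would rely directly on the amplitude formula: compute $n(\lambda,\mu)$ combinatorially and require $n=d_X$ slice by slice.

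The main obstacle is this combinatorial step. It requires showing that $n(\lambda,0)=\dim \overline{W}^\lambda_0$ exactly when $\lambda=(n+1)k\varpi_1^\vee$ or $(n+1)k\varpi_n^\vee$. In those cases, $\overline{W}^\lambda_0$ should be identified with a nilpotent-type variety $\{A\in \mathfrak{gl}_{n+1}[t]\text{-matrix}: \ldots\}$ cut out by coefficients of the characteristic polynomial, which form a regular sequence. For all other $\lambda$, I would exhibit an extra zero-weight or excess generator, for example a $2\times 2$ generalized minor. This rules out lci at $0$, and hence globally.
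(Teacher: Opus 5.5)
There is a genuine gap: the numerical criterion is misread as smoothness, and the combinatorial computation that carries the theorem is left undone.

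\textbf{The reduction to smoothness is false.} Your reduction to the slices $\overline{W}^\lambda_\mu$ is fine, and so is your first criterion. Since $\overline{W}^\lambda_\mu$ is Cohen--Macaulay and $(\overline{W}^\lambda_\mu)^T$ is zero-dimensional, lci at $t^\mu$ holds iff $n=\dim(T^*_\mu\overline{W}^\lambda_\mu)^{\neq 0}$ equals $d=\dim\overline{W}^\lambda_\mu$. The error comes when you rewrite $n=d_X$ as ``$(T_0^*X)^0=0$ and embedding dimension $d_X$'', i.e.\ as smoothness. The integer $n$ counts only the nonzero-weight part of the cotangent space. The zero-weight part (containing, e.g., images of diagonal coefficients $\Delta^{(l)}_{i,i}$) is typically nonzero and plays no role in $I^T$. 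Krull gives $n\ge d_X$, but equality does not force regularity. For $SL_2$, $\overline{W}^{\alpha^\vee}_0\cong\{a^2+bc=0\}$ is singular, yet $I^T=(b,c)$ is generated by a regular sequence, with $\mathbb{C}[X^T]=\mathbb{C}[a]/(a^2)$. The contradiction you hit with the singularity of non-minuscule $\overline{\mathrm{Gr}}^\lambda$ comes from this conflation, not from reducedness of $X^{\mathbb{T}}$. Likewise, an ``extra zero-weight generator'' can never obstruct lci, because homogeneous minimal generators of $I^T$ all have nonzero $T$-weight (Proposition \ref{prop:generator}).

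\textbf{The missing combinatorics.} With the criterion stated correctly, the content of the theorem is computing $\dim(T_\mu\overline{\mathrm{Gr}}^\lambda)^{\neq 0}$, which you leave as ``the main obstacle''. The missing input is that in type A the nonzero-weight tangent space is spanned by root tangent vectors. Its dimension is therefore the total length $\sum_{\beta^\vee}RL^\lambda(\mu,\beta^\vee)$ of coroot strings from $\mu$ inside the weight polytope $P_\lambda$. At $\mu=0$ this equals $n(n+1)\min(a_1,a_n)$ for $\lambda=\sum a_i\alpha_i^\vee$ (Proposition \ref{prop:min ai}).

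For ``only if'' it suffices to look at $\mu=0$. Dominance, $2a_k\ge a_{k-1}+a_{k+1}$ with $a_0=a_{n+1}=0$, gives $a_k\le k\,a_1$ and $a_k\le (n+1-k)\,a_n$. Hence $\langle 2\rho,\lambda\rangle=2\sum a_k\le n(n+1)\min(a_1,a_n)$, with equality exactly for $\lambda=(n+1)k\varpi_1^\vee$ or $(n+1)k\varpi_n^\vee$.

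For ``if'' you must check \emph{every} fixed point $t^\mu$, and your plan, which concentrates on $\mu=0$, does not do this. For $\lambda=(n+1)k\varpi_1^\vee$ the polytope is the simplex $\{\mu_i\ge -k\}$, and the string in direction $\epsilon_j-\epsilon_i$ has length $\mu_i+k$. The total $n\sum_i(\mu_i+k)=n(n+1)k=\langle 2\rho,\lambda\rangle$ is then independent of $\mu$.

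Your nilpotent-cone model, even if made precise (e.g.\ via Mirkovi\'c--Vybornov), would only exhibit $\overline{W}^\lambda_0$ as a complete intersection in some ambient space. That is a different statement from $(\overline{W}^\lambda_0)^T\subset\overline{W}^\lambda_0$ being one, and it says nothing about $\mu\neq 0$.
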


\subsection{Symplectic Duality} 
One perspective on the $T$-fixed subschemes comes from symplectic duality. Symplectic singularities and symplectic resolutions play important roles in the study of geometric representation theory. A fascinating phenomenon in the theory of conical symplectic singularities is that they tend to come in dual pairs. 

One example is the Hikita conjecture. Let $X$ be a conical symplectic singularity in the sense of \cite{BeauSS} with a $\mathbb{T}=T \times \mathbb{G}_m$-action subject to some assumptions (see Section \ref{Sec2} for our precise list). Let $X^{!}$ denote its symplectic dual and suppose it admits a symplectic resolution $\widetilde{X}^!\to X^{!}$. Consider the $T$-fixed subscheme $X^T$ with respect to the Hamiltonian $T$-action on $X$. Hikita \cite{Hik15} conjectures an isomorphism of graded rings\footnote{To match the gradings of two sides, one need to use the twice of $\Gmrot$-grading on the left hand side.}
\[\C[X^T]\cong \mathsf{H}^\bullet_{\mathrm{sing}}(\widetilde{X}^!,\mathbb{C}),\]
where $\mathsf{H}_{\mathrm{sing}}^\bullet(-, \mathbb{C})$ denotes the singular cohomology of a topological space with coefficients in $\mathbb{C}$.
There are sevaral generalizations of this conjecture, for example the equivariant version \cite[Conjecture 8.9]{KTWWY} due to Nakajima, the quantum version \cite{KMP} due to Kamnitzer, McBreen and Proudfoot, and some refinements due to Hoang, Kyrlov and Matvieievskyi \cite{HKM}.

Our initial goal in beginning this project was to describe features on the symplectic dual side related to our derived intersection $\mathbb{V}^{\bullet}(I^T(X))$. While we were not successful in this regard, we have included some remarks in section \ref{remarks} in this direction.

\subsection{Acknowledgements} The major part of this work was done when both authors were at Tsinghua University, and this work is supported by National Key R\&D Program of China (No. 2025YFA1017400) and NSFC Grant 12225108 through the authors' advisor Peng Shan. The work was completed while the second author was in residence at SLMath for the Fall 2026 semester, supported by the National Science Foundation under Grant No. DMS-2424139.
We thank Peng Shan for her interest in this project, and for her advice and comments. The first author wishes to thank Federico Bongiorno, Sam Jeralds and Josh Kiers for comments and discussion.
The second author wishes to thank Vasily Krylov and Lin Chen for comments and discussion.

\section{Notation for Symplectic Singularities}\label{Sec2}

\subsection{Tori}
Let $T$ be an algebraic torus over $\mathbb{C}$. We have the coweight and weight lattices \[X_*(T):=\mathrm{Hom}(\mathbb{G}_m, T)\] and \[X^*(T):= \mathrm{Hom}(T, \mathbb{G}_m).\] These are dual lattices equipped with the perfect pairing \[\langle, \rangle: X_*(T) \times X^*(T) \rightarrow \mathrm{Hom}( \mathbb{G}_m, \mathbb{G}_m) = \mathbb{Z}\] 
\[(\mu, \lambda) \mapsto \lambda \circ \mu.\]
We will make much use of a larger torus \[\mathbb{T}:=T \times \Gmrot.\] We write $\varpi_r$ for the generator of $X^*(\Gmrot)$, corresponding to the standard representation of $\Gmrot$ on $\mathbb{A}^1$. Thus in general we write \[\lambda +s \varpi_r \in X^*(\mathbb{T})\]  for a weight of $\mathbb{T}$, where $\lambda \in X^*(T)$ and $s \in \mathbb{Z}$.
 
 \subsection{$H$-fixed subschemes}
The main reference for this subsection is \cite{DrinGm}. The original reference is \cite{FogFPS}. Let $H$ be an algebraic group over $\mathbb{C}$. Let $X$ be a $\mathbb{C}$-scheme with $H$-action. Let $h_X$ be the presheaf on $\mathbb{C}$-algebras associated to $X$, i.e.
\[h_X : \mathbb{C}\text{-}\mathrm{Alg} \rightarrow \mathrm{Set}\] 
\[R \mapsto X(R).\]
The definition of $H$-fixed subsheaf is as follows.

\begin{definition}
	For any $R\in \mathbb{C}\text{-}\mathrm{Alg}$, we view $\Spec (R)$ as an $H$-scheme with trivial $H$-action. Let $\mathrm{Hom}^H(\Spec R,X)$ be the set of $H$-equivariant morphisms from $\Spec R$ to $X$. This defines a presheaf
	\[h_X^H:\mathbb{C}\text{-}\mathrm{Alg}\to \mathrm{Set}\]
	by sending $R$ to $\mathrm{Hom}^H(\Spec (R),X)$. We have a natural map $h_X^H\to h_X$.
\end{definition}

We will focus on the case when $H$ is a torus. We first assume $X$ is affine, i.e.$: X=\Spec B$ for some $\mathbb{C}$-algebra $B$. An $H$-action on $X$ is equivalent to an $X^*(H)$-grading: 
$$B=\bigoplus_{\chi \in X^*(H)}B_{\chi}$$
on $B$, such that $B_{\chi_1}\cdot B_{\chi_2}\subset B_{\chi_1+\chi_2}$. Let $I$ be the ideal of $B$ generated by the homogeneous elements of nonzero degrees.
\begin{proposition}\label{representability}
	The presheaf $h_X^H$ is representable by $\Spec  B/I$.
\end{proposition}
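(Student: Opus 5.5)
The plan is to use the bijection between $H$-equivariant maps $\Spec R \to X$ (with $\Spec R$ carrying the trivial action) and graded ring maps $B \to R$, where $R$ is concentrated in degree $0$. Once this is set up, the statement reduces to the universal property of the quotient $B/I$. I would prove the natural bijection
\[\mathrm{Hom}^H(\Spec R, X) \simeq \mathrm{Hom}_{\mathbb{C}\text{-}\mathrm{Alg}}(B/I, R),\]
functorially in $R$.

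\textbf{Step 1: translate equivariance into coactions.} Write $H=\Spec \mathbb{C}[X^*(H)]$. The action on $X$ is the coaction $\Delta: B \to B \otimes \mathbb{C}[X^*(H)]$, given by $b \mapsto \sum_\chi b_\chi \otimes \chi$ for $b=\sum_\chi b_\chi$. The trivial action on $\Spec R$ is the coaction $r \mapsto r \otimes 1$.

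A morphism $f: \Spec R \to X$, corresponding to $\varphi: B \to R$, is $H$-equivariant if and only if the two composites $H \times \Spec R \to X$ agree. This is the identity
\[(\varphi \otimes \mathrm{id})\circ \Delta = (b \mapsto \varphi(b)\otimes 1)\]
of maps $B \to R \otimes \mathbb{C}[X^*(H)]$. Evaluated on a homogeneous $b \in B_\chi$, it reads $\varphi(b)\otimes \chi = \varphi(b)\otimes 1$.

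\textbf{Step 2: extract the vanishing condition.} The characters form a $\mathbb{C}$-basis of $\mathbb{C}[X^*(H)]$, so $R\otimes \mathbb{C}[X^*(H)]$ is a free $R$-module with basis $X^*(H)$. Hence for $\chi \neq 0$ the equality $\varphi(b)\otimes \chi = \varphi(b)\otimes 1$ forces $\varphi(b)=0$. Conversely, if $\varphi$ kills every homogeneous element of nonzero degree, the identity holds on each homogeneous component, and by linearity on all of $B$. So $\varphi$ is equivariant if and only if $\varphi(I)=0$, that is, if and only if $\varphi$ factors uniquely through $B/I$.

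\textbf{Step 3: naturality.} Both bijections are compatible with maps $R \to R'$, since each is just composition of ring maps. This gives an isomorphism of presheaves $h_X^H \simeq h_{\Spec B/I}$. Under it, the natural map $h_X^H \to h_X$ corresponds to the closed immersion $\Spec B/I \to X$.

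I do not expect a serious obstacle. The one point needing care is Step 2: equivariance must be checked as an identity of maps of schemes $H \times \Spec R \to X$, not just on $\mathbb{C}$-points, because $R$ may be nonreduced. Working with coactions and the freeness of $\mathbb{C}[X^*(H)]$ handles this.
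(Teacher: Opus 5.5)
Your proof is correct and follows the same route as the paper: $H$-equivariant maps $\Spec R\to \Spec B$ are exactly the ring maps $B\to R$ that kill every $B_\chi$ with $\chi\neq 0$, which are the same as ring maps $B/I\to R$. Your coaction argument, using freeness of $R\otimes\mathbb{C}[X^*(H)]$ over $R$, spells out the step the paper asserts in one line. That step is the equivalence between compatibility with the $H$-action and vanishing on the nonzero-weight pieces, and your argument makes it valid for nonreduced $R$ as well.
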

\begin{proof}
	The set $\mathrm{Hom}^H(\Spec R,\Spec B)$ consists of ring homomorphisms $\phi:B\to R$ compatible with $H$-actions, which is equivalent to requiring those $B_{\chi}$ are sent to zero if $\chi\neq 0$. Thus $\mathrm{Hom}^H(\Spec R,\Spec B)=\mathrm{Hom}(\Spec  R, \Spec B/I)$.
\end{proof}

The following proposition assures the representability of $h_X^H$ for general $\mathbb{C}$-schemes $X$ of finite type. The proof is an easy generalization of \cite{DrinGm} (replacing $\mathbb{G}_m$ with the algebraic torus $H$).

\begin{proposition}[Proposition 1.2.2, \cite{DrinGm}]
	If $X$ is a $\mathbb{C}$-scheme $X$ of finite type, then $h_X^H$ is representable by a $\mathbb{C}$-scheme $X^H$ of finite type, and the natural morphism $X^H\rightarrow X$ is a closed embedding. 
\end{proposition}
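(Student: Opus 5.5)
The plan is to reduce to the affine case, where Proposition \ref{representability} already gives the answer, and then glue. The key input is a covering of $X$ by $H$-stable affine opens.

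\textbf{Step 1 (invariant affine cover).} Since $H$ is a torus acting on a scheme of finite type over $\mathbb{C}$, every point lies in an $H$-stable open subscheme. To get \emph{affine} $H$-stable opens I would impose a normality or quasi-projectivity hypothesis, as is standard. With such a hypothesis, Sumihiro's theorem (for normal $X$) gives an $H$-stable affine open cover $\{U_\alpha\}$. This is the step I expect to be the main obstacle, since for general finite type $X$ (for example non-normal, non-separated examples) such covers need not exist.

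If I want to avoid that hypothesis, I would instead follow Drinfeld's argument for $\mathbb{G}_m$ and apply it one factor at a time. Write $H\simeq \mathbb{G}_m^r$. Then $h_X^H$ is the iterated fixed-point functor: first take fixed points for the first factor $\mathbb{G}_m$, then for the second factor acting on the result, and so on. This works because a morphism from a trivially-acted $\Spec R$ is $H$-equivariant if and only if it is equivariant for each factor. The remaining factors act on the fixed locus of the first because $H$ is commutative. Applying \cite{DrinGm} Proposition 1.2.2 $r$ times then gives representability by a closed subscheme. Finite type is preserved, because closed subschemes of finite type schemes are of finite type.

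\textbf{Step 2 (local representability and compatibility).} In the affine case, Proposition \ref{representability} shows that $h_{U}^H$ is represented by the closed subscheme $\Spec B/I \subset U$. I then need to check that formation commutes with restriction to smaller $H$-stable opens. Suppose $V\subset U$ is $H$-stable, with $V$ also affine or covered by such opens. I would show that $h_V^H = h_U^H\times_{h_U} h_V$: a map $\Spec R\to U$ factoring through $V$ is equivariant as a map to $V$ if and only if it is equivariant as a map to $U$, because $V\to U$ is an equivariant monomorphism. Hence $U^H\cap V$ represents $h_V^H$.

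\textbf{Step 3 (gluing).} The closed subschemes $U_\alpha^H\subset U_\alpha$ agree on overlaps by Step 2, so they glue to a closed subscheme $X^H\subset X$. It remains to check that $X^H$ represents $h_X^H$. Given an $H$-equivariant map $f\colon \Spec R\to X$, the preimages $f^{-1}(U_\alpha)$ cover $\Spec R$ and are $H$-stable, which is automatic because the action on $\Spec R$ is trivial. Covering these preimages by affine opens $\Spec R_i$, each restriction factors uniquely through $U_\alpha^H$. The factorizations glue by uniqueness, since $h_X^H$ is a Zariski sheaf: equivariance is a local condition. Conversely, a map into $X^H$ is locally equivariant and therefore equivariant.

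Finally, $X^H\to X$ is a closed embedding because it is so locally on the cover. $X^H$ is of finite type because it is closed in $X$.
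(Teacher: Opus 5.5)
Your argument is correct, but only through the second route in Step 1 (the iteration), and that route differs from the paper's. The paper simply asserts that Drinfeld's $\mathbb{G}_m$ argument goes through with $\mathbb{G}_m$ replaced by the torus $H$, with Proposition \ref{representability} as the affine input. You instead treat Drinfeld's rank-one result as a black box and induct on the rank. The steps are these:
\begin{itemize}
\item Over $\mathbb{C}$ the torus is split, so $H\simeq\mathbb{G}_m^r$.
\item An $R$-point is $H$-fixed if and only if it is fixed by each factor.
\item Commutativity lets the remaining factors act on each intermediate closed fixed locus.
\item Each inclusion is a monomorphism, so the fixed points of the next factor on $X^{\mathbb{G}_m^{(1)}}$ are exactly the points of $X$ fixed by both factors.
\item Composing $r$ closed embeddings gives the claim.
\end{itemize}
What this buys you is that you never have to re-check the internals of Drinfeld's proof in higher rank. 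In particular you avoid whatever he does for non-normal or non-separated $X$, where stable affine covers fail. The paper's route would give a one-step construction, but it requires exactly that re-verification, which it leaves to the reader.

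Once you iterate, your Steps 2 and 3 are not needed at all. They are a separate proof that works only under an extra hypothesis guaranteeing an $H$-stable affine open cover. On their own they prove a weaker statement, not the one given.

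Also, ``quasi-projective'' by itself is not such a hypothesis. Take the nodal plane cubic, obtained from $\mathbb{P}^1$ by gluing $0$ and $\infty$, with the $\mathbb{G}_m$-action descended from scaling on $\mathbb{P}^1$. It is projective, yet the only $\mathbb{G}_m$-stable open containing the node is the whole curve, so the node has no stable affine neighbourhood. You would need normality (Sumihiro's theorem) or an $H$-equivariant embedding into a projective space with a linear $H$-action.
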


\subsection{Symplectic Singularities}\label{symplecticsingassump}
We work in the following framework. Let $X$ be an affine symplectic singularity in the sense of Beauville \cite{BeauSS}: that is, the regular locus $X_{reg}$ is equipped with an algebraic symplectic 2-form $\sigma$ such that given any resolution of singularities \[\pi:\tilde{X} \rightarrow X,\] $\pi^*\sigma$ extends to an algebraic 2-form on $\tilde{X}$. We make the following additional assumptions.

\begin{itemize}
\item $X$ is a $\mathbb{T}$-variety
\item $\mathbb{C}[X]$ is positively graded under the $\Gmrot$ action with $\mathbb{C}[X]_0=\mathbb{C}$, so that $\Gmrot$ repels $X$ from the $\Gmrot-$fixed subscheme $0$. \footnote{One could consider the opposite action of $\Gmrot$ which contracts $X$ to $0$; under this action $A$ is negatively graded.}
\item $\sigma$ is $T$-invariant and $\Gmrot$ acts on $\sigma$ with weight 1.
\end{itemize}

\section{Ideals and Cotangent spaces}\label{Sec3}

Let $A$ be a Noetherian $\C$-algebra with a $\mathbb{T}$-action such that $A$ is positively graded with respect to $\Gmrot$ and such that $A^{\Gmrot}=A_0=\mathbb{C}$. Thus we can write \[A = \bigoplus_{\lambda + k \varpi_r} A_{\lambda+k \varpi_r} \] with $k \geq 0$ and $\lambda \in X^*(T)$; we have $A_{\lambda}=0$ for $\lambda \neq 0$. We have the maximal ideal \[ \mathfrak{m}_A:= \bigoplus_{\lambda \in X^*(T), k >0} A_{\lambda+k \varpi_r}.\]

Let $I^T(A)$ denote the ideal generated by the non-trivial $T$-weight spaces. This is a finitely generated ideal by our Noetherian assumptions. We have $I^T(A) \subset \mathfrak{m}_A$. We wish to compare a generating set of the ideal $I^T(A)$ to the cotangent space $\mathfrak{m}_A/\mathfrak{m}_A^2$. We write $(\mathfrak{m}_A/\mathfrak{m}_A^2)^T$ for the $T$-invariants and $(\mathfrak{m}_A/\mathfrak{m}_A^2)^{\neq 0}$ for the direct sum of the nontrivial $T$-weight spaces.

\begin{proposition}\label{prop:generator}
	The natural map \[ I^T(A)/\mathfrak{m}_A \cdot I^T(A) \rightarrow \mathfrak{m}_A / \mathfrak{m}_A^2\] yields an isomorphism \[I^T(A)/\mathfrak{m}_A \cdot I^T(A) \simeq (\mathfrak{m}_A / \mathfrak{m}_A^2)^{\neq 0}.\]
\end{proposition}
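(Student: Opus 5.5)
The map in question sends $I^T(A)/\mathfrak{m}_A I^T(A)$ to $\mathfrak{m}_A/\mathfrak{m}_A^2$ by $x \mapsto x \bmod \mathfrak{m}_A^2$. It is well defined because $I^T(A)\subset \mathfrak{m}_A$ implies $\mathfrak{m}_A I^T(A)\subset \mathfrak{m}_A^2$. Everything in sight is $\mathbb{T}$-graded and the map is $\mathbb{T}$-equivariant, so the plan is to split both sides into $T$-weight spaces and argue weight by weight. There are three steps: the image lies in $(\mathfrak{m}_A/\mathfrak{m}_A^2)^{\neq 0}$, it is all of that subspace, and the map is injective.

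\emph{Image.} First check that $I^T(A)/\mathfrak{m}_A I^T(A)$ has no $T$-weight $0$ part. A $T$-homogeneous element of $I^T(A)$ can be written as $\sum a_j y_j$, where each $y_j$ is homogeneous of nonzero $T$-weight $\chi_j$ and each $a_j$ is homogeneous. Any summand with $a_j\in A_0=\mathbb{C}$ has nonzero weight $\chi_j$. So if the total weight is $0$, every surviving summand has $a_j$ of weight $-\chi_j\neq 0$. Such an $a_j$ has positive $\Gmrot$-degree, since $A_\lambda=0$ for $\lambda\neq 0$ in pure $T$-degree, hence $a_j\in\mathfrak{m}_A$. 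Therefore $I^T(A)_{T\text{-wt }0}\subset \mathfrak{m}_A I^T(A)$, and the image lands in the nonzero weight part.

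\emph{Surjectivity.} Any homogeneous element of $\mathfrak{m}_A$ with nonzero $T$-weight lies in $I^T(A)$ by definition. Hence every class in $(\mathfrak{m}_A/\mathfrak{m}_A^2)^{\neq 0}$ lifts to $I^T(A)$.

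\emph{Injectivity.} This is the main step. Suppose $x\in I^T(A)$ is homogeneous of nonzero $T$-weight $\chi$ and $x\in\mathfrak{m}_A^2$. Write $x=\sum b_j c_j$ with $b_j,c_j\in\mathfrak{m}_A$ homogeneous and $\mathrm{wt}_T(b_j)+\mathrm{wt}_T(c_j)=\chi$; this is possible because homogeneous components of an element of $\mathfrak{m}_A^2$ lie in $\mathfrak{m}_A^2$. Since $\chi\neq0$, for each $j$ at least one factor, say $b_j$, has nonzero $T$-weight, so $b_j\in I^T(A)$. The other factor $c_j$ lies in $\mathfrak{m}_A$. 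Hence $b_jc_j\in \mathfrak{m}_A I^T(A)$, and so $x\in \mathfrak{m}_A I^T(A)$. Since the map respects the weight grading, this gives injectivity on each nonzero weight space. Combined with the vanishing of the weight-$0$ part of the source from the first step, this yields the isomorphism.

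The only subtle points are bookkeeping. One must reduce to homogeneous components, which works because all ideals involved are $\mathbb{T}$-homogeneous. One must also use the hypothesis $A_\lambda=0$ for $\lambda\neq0$, which is exactly what forces coefficients of nonzero $T$-weight into $\mathfrak{m}_A$ in the first step.
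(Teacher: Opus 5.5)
Your proof is correct and uses the same three ingredients as the paper. These are the weight-zero computation showing that a weight-zero element of $I^T(A)$ lies in $\mathfrak{m}_A \cdot I^T(A)$, the argument that in a nonzero-weight product from $\mathfrak{m}_A^2$ one factor has nonzero $T$-weight, and surjectivity from $\mathfrak{m}_A^{\neq 0}\subset I^T(A)$. The only difference is placement: the paper uses the weight-zero computation as the case $\lambda=0$ of its injectivity step, while you use it to show the image lands in $(\mathfrak{m}_A/\mathfrak{m}_A^2)^{\neq 0}$, a point the paper leaves implicit.
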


\begin{proof}
	We first show injectivity. Let $\overline{x} \in I^T(A)/\mathfrak{m}_A \cdot I^T(A)$ be a homogeneous element of $T$-weight $\lambda$ such that the image of $\overline{x}$ in $\mathfrak{m}_A/\mathfrak{m}_A^2$ is zero. Then choosing a homogeneous representative $x \in I^T(A)$ we have $x \in \mathfrak{m}^2_A$. If $\lambda =0$, then since $x \in I^T(A)$, we may write $x= \sum y_i b_i$ for $y_i, b_i$ homogeneous; here $b_i$ has nontrivial $T$-weight and $\operatorname{wt}(y_i b_i)=\operatorname{wt}(x)=0$. Then $\operatorname{wt}(y_i)=-\operatorname{wt}(b_i)\neq 0$, so $y_i\in \mathfrak{m}_A$ by our assumptions. Thus $x \in \mathfrak{m}_A \cdot I^T(A)$. If $\lambda \neq 0$ then since $x \in \mathfrak{m}_A^2$, we may write $x = \sum_i y_{i} x_{i}$ for homogeneous $y_i,x_i\in \mathfrak{m}_A$ such that $\operatorname{wt}{(y_ix_i)}=\operatorname{wt}(x)$. Then either $y_{i}$ or $x_i$ has a nontrivial $T$-weight for each $i$, so $x \in \mathfrak{m}_A \cdot  I^T(A)$. 
	
	For surjectivity we observe that the inclusion \[I^T(A) \rightarrow \mathfrak{m}_A \] induces a surjection of graded $\mathbb{C}$-vector spaces \[I^T(A) \rightarrow \mathfrak{m}_{A}^{\neq 0} \rightarrow 0\] and thus a surjection \[ I^T(A) \rightarrow (\mathfrak{m}_A/\mathfrak{m}_A^2)^{\neq 0} \rightarrow 0.\] This latter map factors through $I^T(A) / \mathfrak{m}_A \cdot I^T(A)$ and we have our surjectivity.
\end{proof}

\begin{proposition}\label{prop:basis}
	A homogeneous basis $\{\overline{x}_{\lambda_{1}+j_1\varpi_r}, \dots \overline{x}_{\lambda_{n}+j_n \varpi_r }\}$ for $(\mathfrak{m}_A/\mathfrak{m}_A^2)^{\neq 0}$ may be lifted to a homogeneous minimal generating set $\{x_{\lambda_1+j_1 \varpi_r}, \dots x_{\lambda_n +j_n \varpi_r} \} $ for $I^T(A)$. In particular, the minimal number of generators of $I^T(A)$ is $\dim_{\mathbb{C}} (\mathfrak{m}_A/\mathfrak{m}_A^2)^{\neq 0}$.
\end{proposition}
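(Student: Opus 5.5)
The plan is to combine Proposition \ref{prop:generator} with a graded Nakayama lemma. Via the isomorphism $I^T(A)/\mathfrak{m}_A I^T(A) \simeq (\mathfrak{m}_A/\mathfrak{m}_A^2)^{\neq 0}$ of Proposition \ref{prop:generator}, which is $\mathbb{T}$-equivariant because the map is induced by the inclusion $I^T(A)\subset \mathfrak{m}_A$, each basis vector $\overline{x}_{\lambda_i+j_i\varpi_r}$ corresponds to a class in $I^T(A)/\mathfrak{m}_A I^T(A)$ of weight $\lambda_i+j_i\varpi_r$. Since $I^T(A)$ is a $\mathbb{T}$-graded ideal (generated by homogeneous elements) and $\mathfrak{m}_A I^T(A)$ is a graded submodule, the quotient map $I^T(A)\to I^T(A)/\mathfrak{m}_AI^T(A)$ is surjective in each weight. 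So I will pick a homogeneous preimage $x_{\lambda_i+j_i\varpi_r}\in I^T(A)$ of the same weight. These are the lifts.

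Next I will show that the lifts generate $I^T(A)$. Let $J$ be the ideal they generate. By construction, $J + \mathfrak{m}_A I^T(A) = I^T(A)$. To conclude $J = I^T(A)$, I will apply the graded Nakayama lemma to the finitely generated graded module $M = I^T(A)/J$, which satisfies $M=\mathfrak{m}_A M$. The point needing care is that $A$ is not local. However, $A$ is positively graded by $\Gmrot$ with $A_0=\mathbb{C}$ and $\mathfrak{m}_A = A_{>0}$ in the $\Gmrot$-grading, so the usual argument applies. If $M\neq 0$, take a nonzero homogeneous element of minimal $\Gmrot$-degree among a finite set of homogeneous generators. This element would have to lie in $\mathfrak{m}_A M$, whose elements all have strictly larger minimal degree, which is a contradiction. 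Here the $\Gmrot$-degree of $M$ is bounded below because $M$ is finitely generated and $A$ is nonnegatively graded. I expect this step to be the main obstacle, mild as it is: making sure $\mathfrak{m}_A$ is exactly the positive $\Gmrot$-degree part, so that Nakayama holds in this non-local setting.

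Finally, for minimality: any generating set of $I^T(A)$ surjects onto $I^T(A)/\mathfrak{m}_AI^T(A)$, which by Proposition \ref{prop:generator} has dimension $n=\dim_{\mathbb{C}}(\mathfrak{m}_A/\mathfrak{m}_A^2)^{\neq 0}$. Hence every generating set has at least $n$ elements. Our lifted set has exactly $n$ elements, so it is minimal, and the minimal number of generators equals $n$.
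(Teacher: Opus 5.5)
Your proof is correct and follows the same route as the paper, whose proof consists only of the remark that this is Nakayama's lemma in the positively graded case. You supply the details the paper leaves implicit: homogeneous lifting through the equivariant isomorphism of Proposition \ref{prop:generator}, graded Nakayama using $\mathfrak{m}_A = A_{>0}$ for the $\Gmrot$-grading with $A_0=\mathbb{C}$, and the lower bound $\dim_{\mathbb{C}} I^T(A)/\mathfrak{m}_A I^T(A)$ on the size of any generating set.
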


\begin{proof}
	This is an application of Nakayama's lemma in the positively graded case.
\end{proof}

Now we discuss the relations between two minimal generating sets.
\begin{lemma}\label{changeofbasis}
	Let $\{x_1, \dots x_n\}$ and $\{y_1, \dots y_n\}$ be two homogeneous minimal generating sets of $I^T(A)$. Then there exists an invertible matrix $\phi$ with coefficients in $A$ such that $\phi(\{x_1, \dots x_n\})=\{y_1, \dots y_n\}.$
\end{lemma}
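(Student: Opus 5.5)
Since both sets generate $I^T(A)$, we can write each $y_i=\sum_j \phi_{ij}x_j$ and each $x_j=\sum_k \psi_{jk}y_k$ with $\phi_{ij},\psi_{jk}\in A$. Taking graded components and discarding the wrong-degree parts, we may assume each $\phi_{ij}$ is homogeneous of $\mathbb{T}$-weight $\mathrm{wt}(y_i)-\mathrm{wt}(x_j)$, or zero. The claim is that the matrix $\phi=(\phi_{ij})$ is invertible over $A$. The plan is to reduce modulo $\mathfrak{m}_A$ and use the graded Nakayama lemma, as in Proposition \ref{prop:basis}.

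First reduce the relations modulo $\mathfrak{m}_A\cdot I^T(A)$. By Proposition \ref{prop:generator}, $I^T(A)/\mathfrak{m}_A I^T(A)$ is identified with $(\mathfrak{m}_A/\mathfrak{m}_A^2)^{\neq 0}$, which has dimension $n$. By minimality and Proposition \ref{prop:basis}, both $\{\overline{x}_j\}$ and $\{\overline{y}_i\}$ are bases of this vector space. Indeed, a minimal generating set of a positively graded module has $n=\dim$ elements, and these span the quotient, so they form a basis.

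Let $\overline{\phi}\in M_n(\mathbb{C})$ be the image of $\phi$ under $A\to A/\mathfrak{m}_A=\mathbb{C}$. This is the reduction using $A_0=\mathbb{C}$, i.e.\ taking the degree-zero parts of the entries. Then $\overline{y}_i=\sum_j\overline{\phi}_{ij}\overline{x}_j$ in the quotient. So $\overline{\phi}$ is a change of basis matrix and $\det\overline{\phi}\neq 0$.

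It follows that $\det\phi\in A$ has image $\det\overline{\phi}\neq 0$ in $A/\mathfrak{m}_A$. Since $A$ is positively graded with $A_0=\mathbb{C}$, we need an element not in $\mathfrak{m}_A$ to be a unit. This is the main obstacle: $A$ is graded but not local, so an element $c+m$ with $c\in\mathbb{C}^\times$ and $m\in\mathfrak{m}_A$ need not be invertible in $A$ itself.

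To get around this, I would use the homogeneity of the entries. Order the generators by $\Gmrot$-degree and consider $\det\phi$. Because $\phi_{ij}$ has $\mathbb{T}$-weight $\mathrm{wt}(y_i)-\mathrm{wt}(x_j)$, every monomial $\prod_i\phi_{i\sigma(i)}$ in $\det\phi$ has total weight $\sum\mathrm{wt}(y_i)-\sum\mathrm{wt}(x_j)$. So $\det\phi$ is homogeneous. Its degree-zero part $\det\overline{\phi}$ is nonzero, so this common weight is $0$. Hence $\det\phi\in A_0=\mathbb{C}^\times$ is a unit, and $\phi$ is invertible by the adjugate formula.

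If one prefers to avoid this argument, an alternative is to choose $\phi$ block-triangular with respect to the $\Gmrot$-degree filtration. The diagonal blocks are the scalar matrices $\overline{\phi}$ restricted to equal degrees, and the determinant again lies in $\mathbb{C}^\times$. Either way, $\phi(\{x_1,\dots,x_n\})=\{y_1,\dots,y_n\}$ with $\phi\in GL_n(A)$.
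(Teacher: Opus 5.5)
Your proof is correct and follows the paper's argument. You choose the $\phi_{ij}$ homogeneous of weight $\mathrm{wt}(y_i)-\mathrm{wt}(x_j)$, which makes $\det\phi$ homogeneous, hence it lies in $A_0=\mathbb{C}$, and it is nonzero because $\overline{\phi}$ is a change of basis on $I^T(A)/\mathfrak{m}_A I^T(A)$. The only (harmless) difference is that you deduce that $\det\phi$ has weight zero from $\det\overline{\phi}\neq 0$, whereas the paper gets it from the two generating sets having the same multiset of weights.
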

\begin{proof}
	Since $\{x_1,\dots,x_n\}$ is a generating set of $I^T(A)$, we can find homogeneous elements $a_{ij}\in A$ for $i,j=1,2,\dots,n$ such that 
	\[y_i=\sum_{i=1}^n a_{ij}x_j.\]
	We can choose $a_{ij}$ such that $\operatorname{wt}(a_{ij}x_j)=\operatorname{wt}(y_i)$, so $\operatorname{wt}(a_{ij})=\operatorname{wt}(y_i)-\operatorname{wt}(x_j)$. Define $\phi:=(a_{ij})$. The determinant of $\phi$ is
	\[\det\left(\phi\right)=\sum_{\sigma\in \mathfrak{S}_n}\operatorname{sgn}(\sigma) \prod_{i=1}^n a_{i\sigma(i)},\]
	and
	\[\operatorname{wt}\left(\prod_{i=1}^n a_{i\sigma(i)}\right)=\sum_{i=1}^n \operatorname{wt}(a_{i\sigma(i)})=\sum_{i=1}^n\big(\operatorname{wt}(y_i)-\operatorname{wt}(x_{\sigma(i)})\big)=0\]
	because $\{x_1,\dots,x_n\}$ and $\{y_1,\dots,y_n\}$ have the same set of weights. Then we have
	\[\operatorname{wt}(\det\left(\phi\right))=0,\]
	so by our assumptions, $\det\left(\phi\right)\in A_0=\C$. This is a nonzero constant because modulo $\mathfrak{m}_A$, $(\overline{a_{ij}})$ determines an automorphism of the vector space $I^T(A)/\mathfrak{m}_A\cdot I^T(A)$ sending $\{\overline{x_1},\dots,\overline{x_n}\}$ to $\{\overline{y_1},\dots,\overline{y_n}\}$. In conclusion, $\det\left(\phi\right)$ is an invertible element in $A$, thus $\phi$ is an invertible matrix with coefficients in $A$.
\end{proof}

\section{Derived Enhancements of T-fixed Subschemes}\label{Sec4}
Going forward, $A$ will refer to a $\mathbb{C}$-algebra such that $\Spec (A)$ is a conical affine symplectic singularity; in contrast, $R$ will denote an arbitrary $\mathbb{C}$-algebra. Given an arbitrary sequence of elements $(x_1, \dots x_n)$ in a $\mathbb{C}$-algebra of finite type $R$, one way to construct a derived scheme whose classical truncation is $\Spec (R/(x_1, \dots x_n))$ is to consider the Koszul complex $K_R^{\bullet}(x_1, \dots x_n)$, viewed as the commutative differential graded algebra (CDGA) of derived functions on the derived intersection $\mathbb{V}^{\bullet}(x_1 \dots x_n)$. This derived scheme is the pullback in the ($\infty-$)category of derived schemes 

\begin{center}
\begin{tikzcd}
\mathbb{V}^{\bullet}(x_1, \dots x_n) \arrow[r] \arrow[d] &\Spec  R \arrow[d] \\
\Spec  \mathbb{C} \arrow["0",r] & \mathbb{A}^n\\
\end{tikzcd}
\end{center}
where the map $\Spec (R) \rightarrow \mathbb{A}^n$ is determined by the elements $x_1, \dots x_n$.
We wish to investigate the geometry of the derived intersections defined by a generating set of $I^T(A)$.

Let $X$ be an affine symplectic singularity with $\mathbb{T}$-action satisfying the conditions of section \ref{symplecticsingassump}. We write $A$ for $\mathbb{C}[X]$. We fix a homogeneous generating set $\{x_1, \dots x_n\}$ of $I^T(A)$, and write $\underline{x}$ for this sequence. As our focus going forward is more geometric, we often write $I^T(X)$ for $I^T(A)$.
\begin{definition}

Let
\[ X \rightarrow \mathbb{A}^{n}\] be the map of schemes induced by 
\[\mathbb{C}[T_1, \dots T_n] \rightarrow A \]
\[T_i \mapsto x_i.\]

Then $\mathbb{V}^{\bullet}(I^T(X),\underline{x})$ is defined to be the following fiber product in the $(\infty-)$category of derived schemes:

\begin{equation}
\begin{tikzcd}
\mathbb{V}^{\bullet}(I^T(X), \underline{x})\arrow[r] \arrow[d] & X \arrow[d] \\
\Spec  \mathbb{C} \arrow["0",r] & \mathbb{A}^{n} \\
\end{tikzcd}
\end{equation}
\end{definition}

Having fixed $X$ and a homogeneous minimal generating set $\underline{x}$ of $I^T(X)$ as above, we sometimes write $\mathbb{V}^{\bullet}(I^T(X))$ or even $\mathbb{V}^{\bullet}$ for $\mathbb{V}^{\bullet}(I^T(X), \underline{x})$.
This is a derived intersection of the vanishing loci of the $x_i$. The structure complex $\mathscr{O}_{\mathbb{V}^{\bullet}}$ is represented by the Koszul complex $K^{\bullet}_A(x_1, \dots x_n)$. In particular, $H^0(K^{\bullet}_A(x_1, \dots x_n))=\mathbb{C}[X^T]$.

\begin{lemma}
Given any two minimal generating sets $\{x_1, \dots x_n\}$ and $\{y_1, \dots y_n\}$ of $I^T(X)$ there exists an isomorphism of derived intersections: 

\[\mathbb{V}^{\bullet}(I^T(X), \underline{x}) \simeq \mathbb{V}^{\bullet}(I^T(X), \underline{y}).\]
\end{lemma}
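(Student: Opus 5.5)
We will use Lemma \ref{changeofbasis}, which provides an invertible matrix $\phi=(a_{ij})$ with entries in $A$ such that $y_i=\sum_j a_{ij}x_j$. Its determinant is a nonzero constant, and each entry has weight $\operatorname{wt}(y_i)-\operatorname{wt}(x_j)$. The idea is that both derived intersections are pullbacks of $0\in\mathbb{A}^n$, and $\phi$ gives an automorphism of the trivial rank $n$ bundle over $X$ carrying one defining section to the other. Equivalently, on the algebraic side the Koszul complexes $K^\bullet_A(\underline{x})$ and $K^\bullet_A(\underline{y})$ are isomorphic as CDGAs over $A$.

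Concretely, let $F=\bigoplus_{j} A e_j$ and $F'=\bigoplus_i A f_i$ be free modules. Give $e_j$ the $\mathbb{T}$-weight $\operatorname{wt}(x_j)$ and $f_i$ the weight $\operatorname{wt}(y_i)$, so that the augmentations $s_x:F\to A$, $e_j\mapsto x_j$, and $s_y:F'\to A$, $f_i\mapsto y_i$, are equivariant. Then $K^\bullet_A(\underline{x})=\Lambda^\bullet_A F$ with differential the unique derivation extending $s_x$, and likewise for $\underline{y}$. Define $\psi:F'\to F$ by $f_i\mapsto\sum_j a_{ij}e_j$. The weight condition on the $a_{ij}$ makes $\psi$ $\mathbb{T}$-equivariant, and by construction $s_x\circ\psi=s_y$.

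The map $\psi$ is an isomorphism because its matrix $\phi$ has unit determinant. The induced map $\Lambda^\bullet\psi:\Lambda^\bullet_A F'\to\Lambda^\bullet_A F$ is therefore an isomorphism of graded-commutative $A$-algebras. It commutes with the Koszul differentials: both $d_x\circ\Lambda\psi$ and $\Lambda\psi\circ d_y$ are $\Lambda\psi$-derivations, so it suffices to check agreement on generators $f_i$, and there it is exactly the identity $s_x\psi=s_y$. This gives an equivariant CDGA isomorphism $K^\bullet_A(\underline{y})\simeq K^\bullet_A(\underline{x})$ over $A$. Since these CDGAs model the structure sheaves of the affine derived schemes $\mathbb{V}^\bullet(I^T(X),\underline{y})$ and $\mathbb{V}^\bullet(I^T(X),\underline{x})$, the derived schemes are isomorphic over $X$.

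If one wants the argument phrased directly with the fiber squares, note that $\phi$ defines an automorphism $\Phi$ of $X\times\mathbb{A}^n$ over $X$. This $\Phi$ preserves the zero section, and it carries the graph of $\underline{x}$ to the graph of $\underline{y}$. Each $\mathbb{V}^\bullet$ is the derived intersection of its graph with the zero section, so $\Phi$ induces the isomorphism.

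The only delicate point is the invertibility of $\phi$ over $A$, and not merely modulo $\mathfrak{m}_A$. This is precisely what Lemma \ref{changeofbasis} provides, via the weight argument that forces $\det\phi\in A_0=\mathbb{C}^\times$. Everything else is formal.
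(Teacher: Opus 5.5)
Your proof is correct and follows the same route as the paper: the paper also invokes Lemma \ref{changeofbasis} to get an invertible $\phi$ over $A$, then cites \cite[15.28.4]{stacks-project} for the fact that an invertible change of generators induces a CDGA isomorphism of Koszul complexes, which you instead prove directly via $\Lambda^\bullet\psi$ and the derivation argument. Your explicit check that $\psi$ is $\mathbb{T}$-equivariant is a useful addition, since the paper leaves it implicit even though the later equivariant duality statements need it.
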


\begin{proof}
We have an element $\phi$ of $GL(A)$ which sends the first generating set of $I^T(A)$ to the second by Lemma \ref{changeofbasis}. Such transformations induce isomorphisms (as differential graded algebras) of the Koszul complexes \[\phi: K^{\bullet}_A(x_1, \dots x_n) \simeq K^{\bullet}_A(y_1, \dots y_n),\] see \cite[15.28.4]{stacks-project}.
\end{proof}

\subsection{Relation between $X^T, \mathbb{V}^{\bullet}(I^T(X), \underline{x}), X^{T, \bullet}$}
We mention here that there are more sophisticated derived schemes that one may consider. The derived intersection $\mathbb{V}^{\bullet}(I^T(X),\underline{x})$ has $X^T$ as its classical truncation, but as a derived scheme it has a nontrivial $T$-action. For an easy example of this phenomenon, see Example \ref{sl3example} and Remark \ref{sl3remark}. Comparing $K^{\bullet}_A(x_1, \dots x_n)$ with its 0th cohomology $H^0(K_A^{\bullet}(x_1, \dots x_n))= \mathbb{C}[X^T]$, we see that we have killed the generators with nontrivial $T$-weight in degree 0 by ``attaching cells'' $e_i$ in degree $-1$ such that $de_i=x_i$. However there may be relations between the $e_i$ which have nontrivial $T$-weight; these will contribute to nontrivial $T$-actions on $H^{-1}$ of the Koszul complex. We could kill these by additional attachments of cells; this would be (a variant of) the process of Tate \cite{Tate}. Presumably, in the limit, we would produce a derived scheme $X^{T, \bullet}$ which would deserve to be called the derived $T$-fixed subscheme. It is not unreasonable to consider $\mathbb{V}^{\bullet}(I^T(X), \underline{x})$ as a first derived approximation to $X^{T, \bullet}$. This latter object is certainly worthy of study, but in this paper we content ourselves with the derived intersection $\mathbb{V}^{\bullet}$, which already appears in character formulas, see Section \ref{K-Theory}.

\section{Bigraded Poincare duality}\label{Sec5}
\subsection{Reminders on dualizing complexes}

For a derived scheme $Y$ with a $\mathbb{T}$-action, we will write $D^{\mathbb{T}}_{\qcoh}(Y)$ for the stable $\infty$-category of quasi-coherent sheaves on the quotient derived stack $[Y/\mathbb{T}]$; we write $D^{\mathbb{T}}_{\coh}(Y)$ for the sub-$\infty$-category consisting of bounded objects with coherent cohomologies. When $Y$ is a classical scheme, these categories can be described using the usual equivariant derived categories. 

We will recall some knowledge on $!$-pullback and dualizing complexes as established in \cite[Sec. 6.4]{LurieSAG}. Let $Y,Z$ be derived schemes of finite type over $\Spec \C$ with $\mathbb{T}$-actions, and let $f:Y \to Z$ be a $\mathbb{T}$-equivariant proper
and locally of finite Tor-amplitude morphism; by abuse of notation, we also denote the induced morphism between the quotient derived stacks by $f$. Then one can define the $!$-pull back functor, which serves as a right adjoint of $f_*$:
\[f^!:D^\mathbb{T}_{\qcoh}(Z) \to D^\mathbb{T}_{\qcoh}(Y),\]
see \cite[Prop. 6.4.2.1]{LurieSAG}. We also need the following compatibility between $!$-pullback and $*$-pullback.
\begin{proposition}[{\cite[Prop. 6.4.2.1]{LurieSAG}}]\label{luriesquare}
	Given a morphism $f:Y \to Z$ satisfying above conditions, and a Cartesian diagram
	\begin{center}
		\begin{tikzcd}
			Y' \arrow["{g'}",r] \arrow["{f'}",d] & Y \arrow["f",d] \\
			Z' \arrow["g",r] & Z, \\
		\end{tikzcd}
	\end{center} 
	we have the associated diagram
	\begin{center}
		\begin{tikzcd}
			D^\mathbb{T}_{\qcoh}(Z) \arrow["{g^*}",r] \arrow["{f^!}",d] & D^\mathbb{T}_{\qcoh}(Z') \arrow["f'^!",d] \\
			D^\mathbb{T}_{\qcoh}(Y) \arrow["g'^*",r] & D^\mathbb{T}_{\qcoh}(Y') \\
		\end{tikzcd}
	\end{center} 
	and moreover the canonical map $g'^*f^!\to f'^!g^*$ is an isomorphism.
\end{proposition}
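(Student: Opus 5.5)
The plan is to deduce the equivariant statement from the non-equivariant one in \cite[Prop. 6.4.2.1]{LurieSAG}, which holds for proper, locally finite Tor-amplitude morphisms of derived schemes (indeed of spectral algebraic spaces). We use descent along the atlas $Z\to[Z/\mathbb{T}]$. The key point is that, since $\mathbb{T}$ acts on both $Y$ and $Z$ and $f$ is equivariant, the morphism of quotient stacks $f:[Y/\mathbb{T}]\to[Z/\mathbb{T}]$ is representable. Its pullback along the smooth, faithfully flat atlas $p:Z\to[Z/\mathbb{T}]$ is the original map $f:Y\to Z$. More generally, its pullback along each term $Z\times\mathbb{T}^k\to[Z/\mathbb{T}]$ of the Čech nerve is $f\times\mathrm{id}:Y\times\mathbb{T}^k\to Z\times\mathbb{T}^k$, and each of these is again proper and of finite Tor-amplitude.

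\textbf{Step 1: describe $f^!$ on the quotient stacks termwise.} We write
\[D^{\mathbb{T}}_{\qcoh}(Z)\simeq\lim_{[k]\in\Delta}D_{\qcoh}(Z\times\mathbb{T}^k),\]
with transition functors the $*$-pullbacks along the face maps, and similarly for $Y$. Under these identifications $f_*$ is computed termwise, because pushforward commutes with flat base change. We then claim that the termwise functors $(f\times\mathrm{id})^!$ assemble into a functor between the limits. For this we need that, for each face map $\delta$ (which is flat), the map $\delta^*(f\times\mathrm{id})^!\to(f\times\mathrm{id})^!\delta^*$ is an isomorphism; this is exactly Lurie's non-equivariant base change. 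Given this, the assembled functor is right adjoint to $f_*$, since adjunctions that are compatible with the transition maps pass to limits. By uniqueness of adjoints, it is therefore $f^!$. In particular $p'^*f^!\simeq f^!p^*$ for the atlases.

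\textbf{Step 2: verify the base change isomorphism.} Given the Cartesian square with an equivariant $g$, we pass to the atlases and obtain a Cartesian square of derived schemes $Y'\to Y$, $Z'\to Z$. By Step 1, applying the conservative functor $p_{Y'}^*$ to the canonical map $g'^*f^!\to f'^!g^*$ turns it into the corresponding non-equivariant canonical map. Here we must check that the base change transformation, which is defined via adjunction, is compatible with the identifications of Step 1; this is a diagram chase using the construction of the Beck--Chevalley map. The non-equivariant map is an isomorphism by \cite[Prop. 6.4.2.1]{LurieSAG}. Since $p_{Y'}^*$ is conservative, being pullback along a faithfully flat cover, the equivariant map is an isomorphism as well.

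The main obstacle is Step 1, namely identifying the right adjoint on the limit category with the termwise right adjoints. This requires the flat base change input to be natural enough (coherently, in the $\infty$-categorical sense) to define a map of cosimplicial diagrams. I would first try to handle this by invoking the general fact that a natural transformation of diagrams of categories which is termwise left adjointable has a limit admitting a right adjoint computed termwise (Lurie, \emph{Higher Algebra}, 4.7.4.19). This reduces the coherence to the termwise Beck--Chevalley condition already supplied by the non-equivariant statement.
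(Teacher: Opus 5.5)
Your argument is correct, and it does more than the paper does. The paper gives no proof here. It simply cites \cite[Prop. 6.4.2.1]{LurieSAG}, tacitly applying that result to the representable morphism of quotient stacks $[Y/\mathbb{T}]\to[Z/\mathbb{T}]$ without saying why the non-equivariant statement transfers.

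Your descent argument makes that transfer explicit, in three parts:
\begin{enumerate}
\item Apply the adjointability criterion for limits of $\infty$-categories (\emph{Higher Algebra}, 4.7.4.19) to $f^*$, using ordinary base change for pushforward. This shows $f_*$ is computed levelwise on the \v{C}ech nerve $Y\times\mathbb{T}^\bullet\to Z\times\mathbb{T}^\bullet$.
\item Apply the same criterion to $f_*$, using Lurie's base change for $f^!$. This shows $f^!$ exists on the quotient stacks and is computed levelwise.
\item Paste Beck--Chevalley squares (the atlas square for $Y'\to Z'$ composed with the equivariant square, versus the non-equivariant square composed with the atlas square for $Y\to Z$). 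Then use conservativity of $p_{Y'}^*$.
\end{enumerate}
The citation buys brevity. Your route shows that the only real input is the non-equivariant statement plus formal descent, which is essentially how such statements for relatively representable morphisms are proved in SAG.

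One small correction. Flatness of the face maps is not what your argument uses. The cosimplicial diagram also contains codegeneracies, which come from unit sections $Z\times\mathbb{T}^k\to Z\times\mathbb{T}^{k+1}$, and these are not flat. The criterion you invoke needs adjointability along every transition map. You can handle this in either of two ways. First, invoke base change along arbitrary pullbacks: it holds for $f_*$ by derived base change for quasi-compact quasi-separated maps, and for $f^!$ under Lurie's proper, finite-presentation, finite Tor-amplitude hypotheses. Alternatively, restrict to the semisimplicial indexing category, which computes the same limit.
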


When $Z=\mathrm{pt}=\Spec \mathbb{C}$, we write $\omega_Y:=f^!(\mathscr{O}_{\mathrm{pt}})$. More generally, if $Y\to \mathrm{pt}$ may not be proper but admits an open embedding into a proper derived scheme $\overline{Y}$ over $\mathrm{pt}$, we also define $\omega_Y:=j^*\omega_{\overline{Y}}$ for $j:Y \hookrightarrow \overline{Y}$; $\omega_Y$ doesn't depend on the compactification $j:Y \rightarrow \overline{Y}$. If $Y$ is a smooth scheme of finite type over $\Spec \C$, then $$\omega_Y\simeq \bigwedge^{\dim_Y}\Omega_Y^1[\dim_Y],$$ where $\Omega_Y^1$ is the sheaf of $1$-forms on $Y$.

According to \cite[Prop. 6.6.3.1]{LurieSAG}, $\omega_Y$ is a dualizing complex in the sense of \cite[Def. 6.6.1.1]{LurieSAG}; one can define the Grothendieck-Serre duality functor
\[\mathbb{D}_Y:=\underline{\mathrm{Hom}}(-,\omega_Y): D^\mathbb{T}_{\coh}(Y)^{\mathrm{op}} \to D^\mathbb{T}_{\coh}(Y).\]
For a proper and locally of finite Tor-amplitude morphism $f:Y \to Z$, we have natural isomorphisms
\[f_*\mathbb{D}_Y\simeq f_*\underline{\mathrm{Hom}}(-,f^!\omega_Z)\simeq \underline{\mathrm{Hom}}(f_*(-),\omega_Z)\simeq \mathbb{D}_Z f_*.\]

\subsection{Dualizing complexes of the derived enhancements}

We continue to work in the same framework as Section 2, so $X$ is a conical symplectic singularity with $\mathbb{T}$-action subject to our additional assumptions, and $\underline{x}$ is a fixed minimal homogeneous generating set of $I^T(X)$. In this section we study the dualizing complex $\omega_{\mathbb{V}^{\bullet}}$.

\begin{notation}\label{notation}
Given an object $\mathscr{F} \in D_{\coh}^{\mathbb{T}}(X)$ we may twist the $\mathbb{T}$-equivariant structure by a character $\lambda + k \varpi_r \in X^*(\mathbb{T})$; we write $\mathscr{F}(\lambda+k\varpi_{r}):=\mathscr{F} \otimes_{\mathbb{C}} \mathbb{C}_{\lambda+k \varpi_r}$ for this object.
\end{notation}

\begin{proposition}\label{dualizing}
Let $X$ satisfy the assumptions above. We have an isomorphism 

\[\omega_X \simeq \mathscr{O}_X \Bigl(\frac{d_X}{2}\varpi_{r}\Bigr) [d_X]\] in $D^{\mathbb{T}}_{\coh}(X)$. 
\end{proposition}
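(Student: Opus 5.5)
Our plan is to prove Proposition \ref{dualizing} in two steps. First we show that $X$ is Gorenstein with trivial canonical sheaf. Then we pin down the $\mathbb{T}$-equivariant structure of that sheaf by means of the symplectic form.

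\emph{Step 1: non-equivariant statement.} By Beauville \cite{BeauSS}, a symplectic singularity has rational singularities, so $X$ is normal and Cohen--Macaulay. The dualizing complex is therefore concentrated in one degree: $\omega_X \simeq \omega_X^{0}[d_X]$, where $\omega_X^0 = j_*\Omega^{d_X}_{X_{reg}}$ and $j\colon X_{reg}\hookrightarrow X$ is the inclusion. Since $X$ is normal, the singular locus has codimension at least $2$, and hence $\omega^0_X$ is a reflexive rank one sheaf determined by its restriction to $X_{reg}$. On $X_{reg}$ the top form $\sigma^{\wedge d_X/2}$ is nowhere vanishing, because $\sigma$ is nondegenerate. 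It therefore trivializes $\Omega^{d_X}_{X_{reg}}$, and by reflexivity it extends to a generator of $\omega^0_X$. So $\omega_X^0\simeq \mathscr{O}_X$ as a sheaf, generated by the section $\sigma^{\wedge d_X/2}$.

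\emph{Step 2: equivariance.} The space $j_*\Omega^{d_X}_{X_{reg}}$ is canonically $\mathbb{T}$-equivariant, because $\mathbb{T}$ acts on $X_{reg}$. The identification of Step 1 is the map $\mathscr{O}_X\to \omega^0_X$, $f\mapsto f\,\sigma^{\wedge d_X/2}$. This map is equivariant once the source is twisted by the weight of $\sigma^{\wedge d_X/2}$. By the assumptions of section \ref{symplecticsingassump}, $\sigma$ is $T$-invariant and has $\Gmrot$-weight $1$, so $\sigma^{\wedge d_X/2}$ has weight $\frac{d_X}{2}\varpi_r$. This gives $\omega^0_X\simeq \mathscr{O}_X(\frac{d_X}{2}\varpi_r)$ in the heart. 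We must still check that the equivariant dualizing complex of Lurie, defined via $f^!$ for a compactification, agrees equivariantly with $j_*\Omega^{d_X}_{X_{reg}}[d_X]$ and carries no extra character. For this we would restrict to the smooth open locus $X_{reg}$, where the text records $\omega_{X_{reg}}\simeq \Omega^{d_X}[d_X]$. This identification is canonical, and hence $\mathbb{T}$-equivariant, since it is natural in automorphisms. We then extend across codimension at least $2$ using that $\omega_X$ is a Cohen--Macaulay module in a single degree, hence reflexive (S2).

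\emph{Main obstacle.} The delicate point is the sign and normalization convention in the twist. One must decide whether the weight of $\sigma$ under the action on functions or forms corresponds to $+\varpi_r$ or $-\varpi_r$ in Notation \ref{notation}, and whether $\omega_Y\simeq\Omega^{\dim Y}[\dim Y]$ holds equivariantly with no hidden character. We would settle this by checking the convention on the model case $X=\mathbb{A}^2$ with $\sigma = dx\wedge dy$ of weight $1$. There one computes $\omega_{\mathbb{A}^2}$ directly as $f^!\mathscr{O}_{\mathrm{pt}}$ through $\mathbb{P}^2$ or the Koszul resolution, and compares it with the claimed twist $\mathscr{O}(\varpi_r)[2]$.
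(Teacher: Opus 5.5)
Your proposal is correct and is essentially the paper's argument. Beauville's rationality/Gorenstein result puts $\omega_X$ in a single degree, where it agrees with $\Omega^{d_X}_{X_{reg}}$ on the regular locus; $\sigma^{d_X/2}$ trivializes this with weight $\frac{d_X}{2}\varpi_r$, and normality extends the equivariant isomorphism across the codimension-$\geq 2$ singular locus. The paper phrases that last step via an invertible $\mathscr{L}$, Hartogs and the projection formula; you phrase it via reflexivity of the Cohen--Macaulay canonical module, which recovers the Gorenstein property rather than quoting it. The sign issue you flag is not a real obstacle: under the paper's convention that twisting by $\mathbb{C}_\chi$ shifts weights of sections by $\chi$, a generator of weight $\frac{d_X}{2}\varpi_r$ directly gives $\mathscr{O}_X(\frac{d_X}{2}\varpi_r)$, so no model computation is needed.
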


\begin{proof}
By Prop 1.3 in \cite{BeauSS}, X is rational Gorenstein, so we have an isomorphism $\omega_X \simeq \mathscr{L}[d_X]$ in $D^{\mathbb{T}}_{\coh}(X)$ for some invertible sheaf $\mathscr{L}$. We write $j: X_{reg} \rightarrow X$; since $X_{reg}$ is regular we have an isomorphism

 \[j^*(\mathscr{L}) \simeq \bigwedge^{d_X} \Omega^1_{X_{reg}}.\] 
 
 Recall that $\sigma$ is the symplectic form over $X_{reg}$. Since $\sigma$ is nondegenerate with $\mathbb{T}$-weight $ \varpi_r$, the section $\sigma^{\frac{1}{2} d_X}$ is a nonvanishing form on $X_{reg}$ of top degree with $\mathbb{T}$ weight $ \frac{d_X}{2} \varpi_{r}$. We can use $\sigma^{\frac{1}{2}d_X}$ to trivialize $\bigwedge^{d_X} \Omega^1_{X_{reg}} :$ 
 
\[\sigma^{\frac{1}{2}d_X}: \bigwedge^{d_X} \Omega^1_{X_{reg}} \simeq  \mathscr{O}_{X_{reg}}\Bigl(\frac{d_X}{2}\varpi_{r}\Bigr) .\] 

Since $X$ is a symplectic singularity and thus normal, by algebraic Hartogs' lemma we have $R^0j_*\mathscr{O}_{X_{reg}} \simeq \mathscr{O}_X$. Lastly we use the projection formula for $j$; 

\[\mathscr{L} \simeq \mathscr{L} \otimes_{\mathscr{O}_X} R^0j_* \mathscr{O}_{X_{reg}} \simeq R^0j_* (j^* \mathscr{L} \otimes_{\mathscr{O}_{X_{reg}}} \mathscr{O}_{X_{reg}})=R^0j_* \Bigl(\mathscr{O}_{X_{reg}}\Bigl(\frac{d_X}{2}\varpi_{r}\Bigr)  \Bigr) \simeq \mathscr{O}_{X}\Bigl(\frac{d_X}{2}\varpi_{r}\Bigr) . \] 

We conclude that $\omega_X \simeq \mathscr{O}_X\bigl(\frac{d_X}{2}\varpi_{r}\bigr) [d_X]$.
\end{proof}

We now introduce our derived intersection into the picture. 
As above, $\mathbb{V}^{\bullet}(I^T(X), \underline{x})$ fits in the following Cartesian diagram:

\begin{center}
\begin{equation}\label{cartesiandiagram}
\begin{tikzcd}
\mathbb{V}^{\bullet}(I^T(X), \underline{x}) \arrow["i",r] \arrow["g'",d] & X \arrow["g",d] \\
\Spec  \mathbb{C} \arrow["i_0",r] & \mathbb{A}^n \\
\end{tikzcd}
\end{equation}
\end{center}
Note that $\mathbb{A}^n$ is $\mathbb{T}$-equivariantly isomorphic to $(T_0X)^{\neq 0}$ by Proposition \ref{prop:generator}.
The following result shows that the symplectic singularities of $X$ (as well as the torus action) have strong implications for the dualizing complex of the derived scheme $\mathbb{V}^{\bullet}(I^T(X), \underline{x})$.

\begin{proposition}\label{prop:dualizingcomplex}
We have an isomorphism in $D^{\mathbb{T}}_{\coh}\left(\mathbb{V}^{\bullet}(I^T(X), \underline{x})\right)$: \[\omega_{\mathbb{V}^{\bullet}} \simeq \mathscr{O}_{\mathbb{V}^{\bullet}}\Bigl(\mathrm{wt}\left(\mathrm{det} ((T_0X)^{\neq 0})\right) +\frac{d_X}{2}\varpi_r\Bigr)[d_X-n].\]
\end{proposition}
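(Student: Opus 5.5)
The plan is to compute $\omega_{\mathbb{V}^{\bullet}}$ as $i^!\omega_X$ and then use base change in the Cartesian square \eqref{cartesiandiagram} to reduce everything to the dualizing behaviour of the origin $i_0:\Spec\mathbb{C}\to\mathbb{A}^n$.

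\emph{Step 1: reduce to the closed embedding $i$.} The morphism $i:\mathbb{V}^{\bullet}\to X$ is the base change of $i_0$. The map $i_0$ is a closed immersion cut out by the regular sequence $T_1,\dots,T_n$, so it is proper and of finite Tor-amplitude, and hence so is $i$. I would justify $\omega_{\mathbb{V}^{\bullet}}\simeq i^!\omega_X$ as follows:
\begin{itemize}
\item choose a $\mathbb{T}$-equivariant compactification $j:X\hookrightarrow\overline{X}$, for instance the closure of $X$ in a weighted projective completion, available since $X$ is conical;
\item note that the closure of $\mathbb{V}^{\bullet}$ compactifies $\mathbb{V}^{\bullet}$ compatibly, and that $\mathbb{V}^{\bullet}$ is in fact already proper, its underlying space being the point $0$;
\item combine transitivity of $(-)^!$ with the base-change compatibility of $j^*$ and $(-)^!$ from Proposition \ref{luriesquare}.
\end{itemize}
Proposition \ref{dualizing} then gives
\[\omega_{\mathbb{V}^{\bullet}}\simeq i^!\Bigl(\mathscr{O}_X\bigl(\tfrac{d_X}{2}\varpi_r\bigr)[d_X]\Bigr)\simeq i^!(\mathscr{O}_X)\bigl(\tfrac{d_X}{2}\varpi_r\bigr)[d_X],\]
because $i^!$ is exact and commutes with twisting by a character.

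\emph{Step 2: base change.} Write $\mathscr{O}_X=g^*\mathscr{O}_{\mathbb{A}^n}$ and apply Proposition \ref{luriesquare} with $f=i_0$, $Z'=X$, $Y'=\mathbb{V}^{\bullet}$. This gives
\[i^!\mathscr{O}_X\simeq i^!g^*\mathscr{O}_{\mathbb{A}^n}\simeq g'^*\,i_0^!\mathscr{O}_{\mathbb{A}^n}.\]
We use the equivariant version of Lurie's statement, obtained by working on the quotient stacks $[-/\mathbb{T}]$; the square stays Cartesian after quotienting, since all maps are $\mathbb{T}$-equivariant.

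\emph{Step 3: the local computation.} We compute $i_0^!\mathscr{O}_{\mathbb{A}^n}$ as a $\mathbb{T}$-representation placed in a single degree. Let $V=(T_0X)^{\neq 0}$, so that $\mathbb{A}^n\simeq V$ equivariantly by Proposition \ref{prop:generator}, and $\mathbb{C}[\mathbb{A}^n]=\mathrm{Sym}(V^*)$. Then $i_0^!\mathscr{O}=R\mathrm{Hom}_{\mathrm{Sym}V^*}(\mathbb{C},\mathrm{Sym}V^*)$. Computing this with the Koszul resolution $\mathrm{Sym}V^*\otimes\bigwedge^\bullet V^*$ of $\mathbb{C}$, the only cohomology is $\mathrm{Ext}^n\simeq\det V$ with its natural $\mathbb{T}$-weight. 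So
\[i_0^!\mathscr{O}_{\mathbb{A}^n}\simeq\mathbb{C}\bigl(\mathrm{wt}\det V\bigr)[-n],\]
which is the determinant of the normal bundle shifted by $-n$.

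\emph{Step 4: assemble.} Since $g'^*$ of a twisted constant sheaf is $\mathscr{O}_{\mathbb{V}^{\bullet}}$ with the same twist, Steps 1–3 combine to
\[\omega_{\mathbb{V}^{\bullet}}\simeq\mathscr{O}_{\mathbb{V}^{\bullet}}\bigl(\mathrm{wt}\det V+\tfrac{d_X}{2}\varpi_r\bigr)[d_X-n],\]
which is the claimed isomorphism.

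The step I expect to be most delicate is Step 3, specifically fixing the correct sign of the weight. The issue is whether one gets $\det V$ or $\det V^*$, which depends on the convention in Notation \ref{notation} for how $\mathbb{T}$ acts on functions versus on points. I would settle this by checking the case $n=1$ directly: $\mathrm{Ext}^1_{\mathbb{C}[t]}(\mathbb{C},\mathbb{C}[t])=\mathbb{C}[t]/t\cdot t^{-1}$, which carries weight $-\mathrm{wt}(t)$, and this equals the weight of the tangent line. A secondary point needing care is the compactification argument in Step 1 for the non-proper $X$.
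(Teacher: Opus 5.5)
Your proposal is correct and follows essentially the same route as the paper: write $\omega_{\mathbb{V}^{\bullet}}\simeq i^!\omega_X$, apply Proposition \ref{dualizing}, then use base change (Proposition \ref{luriesquare}) along the Cartesian square to reduce to $i_0^!\mathscr{O}_{\mathbb{A}^n}$. The differences are minor. You compute $i_0^!\mathscr{O}_{\mathbb{A}^n}\simeq\mathbb{C}(\mathrm{wt}\det V)[-n]$ directly by a Koszul $\mathrm{Ext}$ calculation, whereas the paper writes $\mathscr{O}_{\mathbb{A}^n}$ as a twist of $\omega_{\mathbb{A}^n}\simeq\bigwedge^n\Omega^1_{\mathbb{A}^n}[n]$ and uses $i_0^!\omega_{\mathbb{A}^n}\simeq\omega_{\mathrm{pt}}$; your sign check agrees with the paper's weight. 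You also justify $\omega_{\mathbb{V}^{\bullet}}\simeq i^!\omega_X$ via a compactification, which the paper simply asserts.
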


\begin{proof}
The closed immersion $ \Spec  \mathbb{C} \rightarrow \mathbb{A}^n=(T_0X)^{\neq 0}$ is proper and has locally finite tor-dimension. Then by Proposition \ref{luriesquare}, the canonical map $g^{'*} i_0^{!} \rightarrow i^!g^*$ is an equivalence, where these maps come from diagram \eqref{cartesiandiagram}.

We have 

\begin{align*}
\omega_{\mathbb{V}^{\bullet}} & \simeq i^! \omega_{X} & \\
& \simeq i^! \mathscr{O}_X \Bigl(\frac{d_X}{2}\varpi_r\Bigr)[d_X] & \text{by Proposition \ref{dualizing}} \\
& \simeq i^! g^* \mathscr{O}_{\mathbb{A}^n}\Bigl(\frac{d_X}{2}\varpi_r\Bigr)[d_X] & \\
& \simeq (g')^* i_0^! \mathscr{O}_{\mathbb{A}^n} \Bigl(\frac{d_X}{2}\varpi_r\Bigr)[d_X]  & \text{by Proposition \ref{luriesquare}.} \\
\end{align*}
We have a $\mathbb{T}$-equivariant isomorphism of dualizing complexes 
\[\omega_{\mathbb{A}^n} \simeq \bigwedge^n \Omega^1_{\mathbb{A}^n}[d_{\mathbb{A}^n}] \simeq \mathscr{O}_{\mathbb{A}^n}\left(\mathrm{wt}(\mathrm{det} (\mathbb{A}^{n})^{*})\right)[d_{\mathbb{A}^n}].\]

Substituting this expression in the above, we obtain 

\begin{align*}
\omega_{\mathbb{V}^{\bullet}} & \simeq (g')^* i_0^! \mathscr{O}_{\mathbb{A}^n} \Bigl(\frac{d_X}{2}\varpi_r\Bigr)[d_X]  \\
& \simeq (g')^* i_0^! \omega_{\mathbb{A}^n}\Bigl(\mathrm{wt}(\det(\mathbb{A}^n))+\frac{d_X}{2}\varpi_r\Bigr)[d_X-d_{\mathbb{A}^n}] \\
& \simeq (g')^* \omega_{pt}\Bigl(\mathrm{wt}(\det(\mathbb{A}^n))+\frac{d_X}{2}\varpi_r\Bigr)[d_X-d_{\mathbb{A}^n}] \\
& \simeq (g')^* \mathscr{O}_{pt}\Bigl(\mathrm{wt}(\det(\mathbb{A}^n))+\frac{d_X}{2}\varpi_r\Bigr)[d_X-d_{\mathbb{A}^n}] \\
& \simeq \mathscr{O}_{\mathbb{V}^{\bullet}}\Bigl(\mathrm{wt}(\det(\mathbb{A}^n))+\frac{d_X}{2}\varpi_r\Bigr)[d_X-d_{\mathbb{A}^n}]. \\
\end{align*}
Finally we may make the $\mathbb{T}$-equivariant identification $\mathbb{A}^n=(T_0X)^{\neq 0}$ to obtain 
\[\omega_{\mathbb{V}^{\bullet}} \simeq \mathscr{O}_{\mathbb{V}^{\bullet}}\Bigl(\mathrm{wt}\left(\det((T_0X)^{\neq 0})\right)+\frac{d_X}{2}\varpi_r\Bigr)[d_X-n].\]
\end{proof}

\subsection{Bigraded Poincar\'e duality}
Applying the results of the previous section to our specific case, we obtain the following corollary.

\begin{theorem}\label{thm2}
Let \[g': \mathbb{V}^{\bullet}(I^T(X), \underline{x}) \rightarrow \Spec  \mathbb{C}\] be the structural morphism of diagram 6.1. We have isomorphisms in $D^{\mathbb{T}}_{\coh}(pt)$:

\begin{equation}\label{eq:isom_thm}
	\mathbb{D}_{pt}g'_* \mathscr{O}_{\mathbb{V}^{\bullet}} \simeq g'_* \mathscr{O}_{\mathbb{V}^{\bullet}}\Bigl(\mathrm{wt}\left(\mathrm{det}(T_0X^{\neq 0})\right)+\frac{d_X}{2}\varpi_r\Bigr)[d_{X}-n].
\end{equation}
\end{theorem}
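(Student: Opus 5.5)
The plan is to combine the compatibility $f_*\mathbb{D}_Y \simeq \mathbb{D}_Z f_*$ for proper morphisms of locally finite Tor-amplitude with Proposition \ref{prop:dualizingcomplex}. First I will check that $g'$ satisfies the hypotheses. The classical truncation of $\mathbb{V}^{\bullet}$ is $X^T$, which is supported at the point $0$ because $X^{\mathbb{T}}=0$ and the $\Gmrot$-weights of $\mathbb{C}[X^T]$ are positive. So $\mathbb{C}[X^T]$ is finite dimensional and $g'$ is proper. Indeed, each $H^{-i}(K^{\bullet}_A(\underline{x}))$ is a finitely generated $H^0=\mathbb{C}[X^T]$-module, hence finite dimensional. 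For Tor-amplitude, I would argue that the Koszul complex is a bounded complex of finitely generated free $A$-modules and that, over a point, every map has finite Tor-amplitude trivially, since $\mathbb{C}$ is a field. Alternatively, one can avoid this point by using the general isomorphism $\mathbb{D}_{pt} g'_* \simeq g'_* \mathbb{D}_{\mathbb{V}^{\bullet}}$ for proper $g'$, with $\omega_{\mathbb{V}^{\bullet}}=g'^!\mathscr{O}_{pt}$ by definition.

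Next I apply this isomorphism to $\mathscr{O}_{\mathbb{V}^{\bullet}}$:
\[\mathbb{D}_{pt}\, g'_*\mathscr{O}_{\mathbb{V}^{\bullet}} \simeq g'_*\mathbb{D}_{\mathbb{V}^{\bullet}}\mathscr{O}_{\mathbb{V}^{\bullet}} = g'_*\underline{\mathrm{Hom}}(\mathscr{O}_{\mathbb{V}^{\bullet}},\omega_{\mathbb{V}^{\bullet}}) \simeq g'_*\omega_{\mathbb{V}^{\bullet}}.\]
Then I substitute Proposition \ref{prop:dualizingcomplex} for $\omega_{\mathbb{V}^{\bullet}}$. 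The character twist and the shift are applied by tensoring with a one-dimensional $\mathbb{T}$-representation placed in a fixed degree, and $g'_*$ commutes with both. This gives exactly \eqref{eq:isom_thm}. All identifications are $\mathbb{T}$-equivariant, since they take place in $D^{\mathbb{T}}_{\qcoh}$ of the quotient stacks.

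The main point needing care is that the adjunction or duality isomorphism must be available in the $\mathbb{T}$-equivariant setting for the quotient stacks $[\mathbb{V}^{\bullet}/\mathbb{T}]\to[pt/\mathbb{T}]$. I would take this from the setup recalled above, where $f^!$ is defined as the right adjoint of $f_*$ on the equivariant categories and the displayed isomorphism $f_*\mathbb{D}_Y\simeq\mathbb{D}_Z f_*$ is stated. I would also check that $\omega_{\mathbb{V}^{\bullet}}$ computed via $i^!\omega_X$ in Proposition \ref{prop:dualizingcomplex} agrees with $g'^!\mathscr{O}_{pt}$. This follows because $g'$ factors as $X\circ i$ after compactifying $X$, with $\mathbb{V}^{\bullet}$ proper, and $!$-pullbacks compose.
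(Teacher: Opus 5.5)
Your proposal is correct and is essentially the paper's argument (properness of $g'$, the isomorphism $g'_*\mathbb{D}_{\mathbb{V}^{\bullet}}\simeq\mathbb{D}_{pt}g'_*$, the identification $\mathbb{D}_{\mathbb{V}^{\bullet}}\mathscr{O}_{\mathbb{V}^{\bullet}}\simeq\omega_{\mathbb{V}^{\bullet}}$, then Proposition \ref{prop:dualizingcomplex}). Your Tor-amplitude check and your check that $i^!\omega_X\simeq g'^!\mathscr{O}_{pt}$ supply details the paper leaves implicit; one justification should be fixed, though. Finiteness of $\mathbb{C}[X^T]$ does not follow from $X^{\mathbb{T}}=0$ and positivity of the $\Gmrot$-grading: that fixed locus is automatically the cone point, and if $T$ acts trivially then $X^T=X$. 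It is instead the standing hypothesis from the introduction that $X^T$ is supported at $0$, and once you invoke it your finite-dimensionality argument goes through.
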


\begin{proof}
Note that \[g': \mathbb{V}^{\bullet}(I^T(X), \underline{x}) \rightarrow \Spec  \mathbb{C}\] is finite and thus proper. So we have an isomorphism \[  g'_* \mathbb{D}_{\mathbb{V}^{\bullet}} \simeq \mathbb{D}_{pt}g'_*.  \] Then \[\mathbb{D}_{pt} g'_* \mathscr{O}_{\mathbb{V}^{\bullet}} \simeq g'_* \mathbb{D}_{\mathbb{V}^{\bullet}} \mathscr{O}_{\mathbb{V}^{\bullet}} \simeq g'_* \omega_{\mathbb{V}^{\bullet}} \simeq g'_*\mathscr{O}_{\mathbb{V}^{\bullet}}\Bigl(\mathrm{wt}\left(\mathrm{det}(T_0X^{\neq 0})\right)+\frac{d_X}{2}\varpi_r\Bigr)[d_{X}-n].\] 
\end{proof}

\begin{corollary}\label{pcare}
We have a $\mathbb{T}$-equivariant isomorphism 
\begin{equation*}\label{eq:isom}
	H^{-i}(K_A^{\bullet}(\underline{x}))^* \simeq H^{d_X -n+i}(K_A^{\bullet}(\underline{x}))\Bigl(\mathrm{wt}\left(\mathrm{det}(T_0X^{\neq 0})\right)+\frac{d_X}{2}\varpi_r\Bigr).
\end{equation*}
%Moreover, this is an isomorphism of $H^0 (K_A^\bullet (\underline{x}))=A/I^T(X)=\C[X^T]$-modules.
\end{corollary}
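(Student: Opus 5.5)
We deduce the corollary directly from Theorem \ref{thm2} by taking cohomology of both sides of \eqref{eq:isom_thm} in $D^{\mathbb{T}}_{\coh}(pt)$.

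First, we identify $g'_*\mathscr{O}_{\mathbb{V}^{\bullet}}$ with the Koszul complex $K_A^{\bullet}(\underline{x})$, regarded as a complex of $\mathbb{T}$-representations. Since $g'$ is finite, the cohomologies $H^j(K_A^{\bullet}(\underline{x}))$ are finite-dimensional $\mathbb{T}$-modules. Each is a subquotient of finitely generated $A$-modules and is supported at $0$, hence finite length. The complex is bounded, living in degrees $[-n,0]$.

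Second, we compute $\mathbb{D}_{pt}$. Over a point, $\omega_{pt}=\mathscr{O}_{pt}$ with trivial $\mathbb{T}$-weight, so $\mathbb{D}_{pt}(V^\bullet)=\underline{\mathrm{Hom}}(V^\bullet,\mathbb{C})$ is the graded linear dual. The category of $\mathbb{T}$-representations over $\mathbb{C}$ is semisimple, so $\mathrm{Hom}(-,\mathbb{C})$ is exact. This gives
\[H^{j}(\mathbb{D}_{pt}V^\bullet)\simeq H^{-j}(V^\bullet)^*\]
$\mathbb{T}$-equivariantly, where the dual carries the contragredient weights.

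Third, we take $H^{j}$ of the right side of \eqref{eq:isom_thm}. The shift $[d_X-n]$ gives $H^{j+d_X-n}(K_A^{\bullet}(\underline{x}))$. Twisting by a character commutes with cohomology, so the weight twist $\mathrm{wt}(\det(T_0X^{\neq 0}))+\frac{d_X}{2}\varpi_r$ is carried along. Setting $j=i$ and comparing with the second step yields
\[H^{-i}(K_A^{\bullet}(\underline{x}))^*\simeq H^{d_X-n+i}(K_A^{\bullet}(\underline{x}))\Bigl(\mathrm{wt}(\det(T_0X^{\neq 0}))+\tfrac{d_X}{2}\varpi_r\Bigr).\]

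The main point needing care is bookkeeping of sign conventions, in two places:
\begin{itemize}
\item checking that $g'_*$ on the derived stack $[\mathbb{V}^{\bullet}/\mathbb{T}]\to B\mathbb{T}$ really returns the Koszul complex with its natural $\mathbb{T}$-weights, so that no extra twist enters;
\item checking that the dual on $D^{\mathbb{T}}_{\coh}(pt)$ is computed degreewise with contragredient weights.
\end{itemize}
Both follow from the equivalence $D^{\mathbb{T}}_{\qcoh}(pt)\simeq$ complexes of $X^*(\mathbb{T})$-graded vector spaces, under which $\underline{\mathrm{Hom}}$ is the graded Hom.
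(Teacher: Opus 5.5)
Your proposal is correct and follows the paper's route. The paper gives no separate proof of the corollary; it is read off from Theorem \ref{thm2} exactly as you do, by applying $H^{i}$ to both sides. On the left, $\mathbb{D}_{pt}$ is the graded linear dual, so $H^{i}(\mathbb{D}_{pt}V^\bullet)\simeq H^{-i}(V^\bullet)^*$. On the right, the shift $[d_X-n]$ and the character twist turn $H^{i}$ into $H^{d_X-n+i}$ twisted by $\mathrm{wt}(\det(T_0X^{\neq 0}))+\frac{d_X}{2}\varpi_r$, and your index and weight bookkeeping checks out.
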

\begin{remark}
	Moreover, $g'_*\mathscr{O}_{\mathbb{V}^\bullet}$ can be equipped with a $K_A^\bullet (\underline{x})$-module structure, and $\mathbb{D}_{\mathrm{pt}} g'_*\mathscr{O}_{\mathbb{V}^\bullet}$ is the dual $K_A^\bullet (\underline{x})$-module. The isomorphism \eqref{eq:isom_thm} is compatible with the $K_A^\bullet (\underline{x})$-module structure by Proposition \ref{prop:dualizingcomplex}. In particular, \eqref{eq:isom} is an isomorphism of $H^0 (K_A^\bullet (\underline{x}))=\C[X^T]$-modules.
\end{remark}

After Corollary \ref{pcare}, we have the following bounds on the cohomology of the structure complex (Koszul complex):

\begin{corollary}\label{amplitude}
 The cohomology $H^{i}(\mathscr{O}_{\mathbb{V}^{\bullet}})=H^i(K_A^{\bullet}(\underline{x}))=0$ for $i \notin [d_X-n, 0]$, where $n$ is the dimension of $(T_0X)^{\neq 0}$. %Moreover, \[H^{{d_{\overline{\mathrm{Gr}}^{\lambda}}-d_{\mathbb{A}^n}}}(K^{\bullet}(f_{\lambda_1+k_1 \varpi_r} \dots f_{\lambda_n+k_n \varpi_r}))\] is dual (as $T \times \mathbb{G}_m$-modules) to $\mathbb{C}[(\overline{\mathrm{Gr}}_0^{\lambda})^T]$.
\end{corollary}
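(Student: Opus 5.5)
The plan is to combine the trivial upper bound coming from the Koszul complex itself with the duality of Corollary \ref{pcare}, which converts that upper bound into the lower bound.

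\textbf{Upper bound.} The Koszul complex $K_A^{\bullet}(\underline{x})$ on $n$ elements is concentrated in cohomological degrees $[-n,0]$, with $K^{-j}=\bigwedge^j A^n$. Hence $H^i(K_A^{\bullet}(\underline{x}))=0$ for $i>0$ and for $i<-n$. The bound $i\le 0$ is the one we keep.

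\textbf{Lower bound via duality.} Corollary \ref{pcare} gives, for every $i\in\mathbb{Z}$, a $\mathbb{T}$-equivariant isomorphism
\[H^{-i}(K_A^{\bullet}(\underline{x}))^* \simeq H^{d_X-n+i}(K_A^{\bullet}(\underline{x}))\Bigl(\mathrm{wt}\left(\mathrm{det}(T_0X^{\neq 0})\right)+\frac{d_X}{2}\varpi_r\Bigr).\]
Twisting by a character does not affect vanishing. Each $H^j(K_A^\bullet(\underline{x}))$ is a finitely generated module over $H^0=\mathbb{C}[X^T]$, which is finite-dimensional because $X^T$ is supported at $0$. So each $H^j$ is a finite-dimensional vector space and vanishes exactly when its dual does.

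Now take $m<d_X-n$ and set $i=m-(d_X-n)<0$. Then $-i>0$, so $H^{-i}=0$ by the upper bound. Its dual vanishes, and therefore $H^{m}(K_A^{\bullet}(\underline{x}))=0$.

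Combining the two bounds, $H^i(K_A^{\bullet}(\underline{x}))=0$ for $i\notin[d_X-n,0]$. Since $H^i(\mathscr{O}_{\mathbb{V}^\bullet})=H^i(K_A^\bullet(\underline{x}))$, this is the claim.

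\textbf{Main obstacle.} The only delicate point is the implication "dual vanishes $\Rightarrow$ module vanishes", which needs the finiteness of $H^j(K_A^{\bullet}(\underline{x}))$. This is already implicit in Theorem \ref{thm2}, where $g'$ is finite and so $g'_*\mathscr{O}_{\mathbb{V}^\bullet}$ has finite-dimensional cohomology. Note also that the index convention of $(-)^*$ in $D^{\mathbb{T}}_{\coh}(pt)$ sends degree $-i$ to degree $i$; this is already absorbed into the statement of Corollary \ref{pcare}, so no further sign bookkeeping is needed.
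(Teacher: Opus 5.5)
Your proposal is correct and follows the paper's intended argument: the paper gives no separate proof and presents the corollary as an immediate consequence of Corollary \ref{pcare}. As in your write-up, the trivial vanishing $H^i(K_A^{\bullet}(\underline{x}))=0$ for $i>0$ is transported by the duality $H^{-i}(K_A^{\bullet}(\underline{x}))^*\simeq H^{d_X-n+i}(K_A^{\bullet}(\underline{x}))$ (up to twist) to vanishing below $d_X-n$. Your finiteness caveat is harmless but not actually needed, since a vector space whose dual is zero is already zero.
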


We wish to highlight a further corollary. We say a polynomial $P(T) \in \mathbb{C}[T]$ is palindromic if there exists an $k \in \mathbb{Z}^{\geq 0}$ such that $T^k P(T^{-1})=P(T)$.

\begin{corollary}\label{sympoly}
Let $X$ satisfy $\dim X=\dim  (T_0X)^{\neq 0}$. Then the Poincar\'e polynomial of $\mathbb{C}[X^{T}]$ is palindromic.
\end{corollary}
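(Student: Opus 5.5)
The plan is to apply Corollary \ref{pcare} with $n=d_X$, and then use Corollary \ref{amplitude} to see that all of the Koszul cohomology sits in degree $0$. Under the hypothesis $\dim X=\dim (T_0X)^{\neq 0}$ we have $d_X-n=0$, so Corollary \ref{amplitude} forces $H^i(K_A^{\bullet}(\underline{x}))=0$ for every $i\neq 0$. In particular $K_A^{\bullet}(\underline{x})$ is quasi-isomorphic to $H^0=\mathbb{C}[X^T]$, which is finite-dimensional because $X^T$ is supported at the point $0$. Setting $i=0$ in Corollary \ref{pcare} then yields a $\mathbb{T}$-equivariant isomorphism
\[\mathbb{C}[X^T]^* \simeq \mathbb{C}[X^T]\Bigl(\mathrm{wt}\left(\det(T_0X^{\neq 0})\right)+\tfrac{d_X}{2}\varpi_r\Bigr).\]

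Next I would pass to $\Gmrot$-graded dimensions; I take ``Poincaré polynomial'' to mean $P(t)=\sum_k \dim \mathbb{C}[X^T]_k\, t^k$ for this grading. Write $\det(T_0X^{\neq 0})$ as having weight $\nu+c\varpi_r$, where $c$ is a fixed integer computed from the $\Gmrot$-weights of the generators $x_i$, with signs fixed by the conventions of Proposition \ref{prop:dualizingcomplex}. Only the $\Gmrot$-part of the isomorphism is needed. Dualizing a graded vector space negates weights, so the graded dimension of $\mathbb{C}[X^T]^*$ is $P(t^{-1})$. The twisted module on the right has graded dimension $t^{m}P(t)$, where $m=c+\tfrac{d_X}{2}$ (up to an overall sign convention). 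Comparing the two sides gives $P(t^{-1})=t^{m}P(t)$.

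It remains to turn this into the paper's notion of palindromic, i.e. to find $k\geq 0$ with $t^kP(t^{-1})=P(t)$. Since $A$ is positively graded, $P$ is a polynomial with constant term $1$, coming from $\mathbb{C}[X^T]_0=\mathbb{C}$. Let $D$ be its degree. The identity $P(t^{-1})=t^mP(t)$ forces $m=-D$: the lowest-order term on the right is $t^m$, while the left side has terms only in degrees $-D,\dots,0$ with nonzero coefficient at $t^{-D}$. Hence $t^{D}P(t^{-1})=P(t)$, and we may take $k=D\geq 0$.

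The main obstacle is bookkeeping rather than anything conceptual. One has to check that the equivariant dual and the twist convention of Notation \ref{notation} produce exactly $P(t^{-1})$ and $t^{\pm m}P(t)$ with consistent signs. The degree argument above sidesteps this, because any sign choice still yields $P(t^{-1})=t^{m}P(t)$ for some integer $m$, and the value of $m$ is forced. One should also confirm that the $T$-weight twist $\nu$ does not affect the $\Gmrot$-graded dimensions, which is clear since a twist by a $T$-character leaves the $\Gmrot$-grading unchanged.
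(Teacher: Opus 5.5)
Your proposal is correct and follows the paper's own argument. When $\dim X=\dim (T_0X)^{\neq 0}$, Corollary \ref{amplitude} places all Koszul cohomology in degree $0$, and Corollary \ref{pcare} at $i=0$ then gives the twisted self-duality $\mathbb{C}[X^T]^*\simeq \mathbb{C}[X^T](\cdot)$. Your step of comparing lowest-degree terms to force the exponent to be $-D$ spells out what the paper's one-line proof leaves implicit, and it has the advantage of not depending on the sign conventions for the twist.
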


\begin{proof}
We apply the general Poincar\'e duality of \ref{pcare} in the case when the cohomology is concentrated in degree 0.
\end{proof}

\section{Additional notation for Slices}\label{Sec6}
Our aim in the following sections is to apply the results of Sections \ref{Sec4} and \ref{Sec5} to an interesting class of symplectic singularities. Our preferred symplectic singularities are the slices $\overline{W}^{\lambda}_{\mu}$ associated to spherical affine Schubert varieties of type $A,D,E$. For technical reasons we mostly restrict our attention to type $A$. 

In this section we supply some additional background and notation for the slices $\overline{W}^{\lambda}_{\mu}$.
\subsection{Reductive groups}
We use the standard notation for reductive groups. We let $G$ be a reductive group over $\mathbb{C}$, and fix $T \subset B \subset G$ a maximal torus and Borel. We write $\Phi$ for $\Phi(G,T)$ the root system, and $\Phi^{\vee}$ for the coroots. The positive roots (weights of the $T$-action on $\mathrm{Lie}(B)$) are denoted $\Phi^+$ and a base of simple roots is denoted $\alpha_1, \dots \alpha_r$. Simple coroots are denoted $\alpha_i^{\vee}$.
\subsection{Affine Grassmannians}The following material is standard and may be found in many references, for instance \cite{ZhuGrass}.
For $G$ an algebraic group, the functor \[LG: \mathbb{C}\text{-}\mathrm{Alg} \rightarrow \mathrm{Set}\] \[R \mapsto G(R((t))),\] is representable by an ind-scheme. We also have the arc group $L^+G$ and congruence subgroup $L^{--}_{\mathrm{pol}}G$ which represent \[ R \mapsto G(R[[t]])\] and \[R \mapsto  \mathrm{ker}_{\mathrm{ev}_{0}(t^{-1})}\left( G(R[t^{-1}]) \rightarrow G(R)\right).\] These are represented by a scheme of infinite type over $\mathbb{C}$ and an ind-scheme over $\mathbb{C}$, respectively. We will also make passing use of the functor 
\[L^{--}G: R \mapsto  \mathrm{ker}_{\mathrm{ev}_{0}(t^{-1})}\left( G(R[[t^{-1}]]) \rightarrow G(R)\right).\] It is represented by an affine scheme of infinite type.

We also have the affine Grassmannian $\mathrm{Gr}_G$, which represents the sheafification in the etale topology of \[R \mapsto LG(R)/L^+G(R).\] This is representable by an ind-projective ind-scheme, for $G$ reductive. Maps of affine algebraic groups \[H \rightarrow G\] induce \[LH \rightarrow LG,\] \[L^+H \rightarrow L^+G,\] \[\mathrm{Gr}_H \rightarrow \mathrm{Gr}_G,\] and so on.

We pick a uniformizer $t$ such that $\mathbb{G}_{m} = \Spec \mathbb{C}[t,t^{-1}]$.
Any coweight $\lambda \in X_*(T)=\mathrm{Hom}(\mathbb{G}_m, T)$ may be composed with the inclusion $T \rightarrow G$ to yield a map $\mathbb{G}_m \rightarrow G$, and as usual we write $t^{\lambda}$ for $\lambda(t) \in LG(\mathbb{C})$. By abuse of notation we also write $t^{\lambda}$ for the image of this point in $\mathrm{Gr}_G(\mathbb{C})$. Note that $t^{\lambda} \in LG$ depends on the choice of uniformizer $t$, but $t^{\lambda} \in \mathrm{Gr}_G$ does not.

The affine Grassmannian is a homogeneous space for $LG$. There is an additional action of $\mathbb{G}_m$ on $\mathrm{Gr}_G$, coming from rescaling the uniformizing parameter (called loop rotation); we write $\Gmrot$ for this copy of $\mathbb{G}_{m}$. Most relevant in this paper will be a restricted action; we restrict \[(LG \rtimes \Gmrot ) \times \mathrm{Gr}_G \rightarrow \mathrm{Gr}_G\]  to \[(T \times \Gmrot )\times \mathrm{Gr}_G=\mathbb{T} \times \mathrm{Gr}_G  \rightarrow \mathrm{Gr}_G.\]

We have isomorphisms $\pi_0(Gr_G) \simeq \pi_1^{alg}(G)$. For instance we have \[\pi_0(\mathrm{Gr}_{SL_{n+1}})=1 \] and for $T$ a maximal torus in $SL_{n+1}$ we have \[\pi_0 (\mathrm{Gr}_T) \simeq \mathbb{Z}\Phi^{\vee}.\]

\subsection{Affine Schubert Varieties}
The Cartan decomposition yields \[\mathrm{Gr}_G = \bigsqcup_{\lambda \in X_*(T)^+} L^+G t^{\lambda} L^+G/L^+G.\]  The orbits 
\[\mathrm{Gr}_G^{\lambda} :=L^+G t^{\lambda} L^+G/L^+G\]
are called (spherical) affine Schubert cells, and their closures $\overline{\mathrm{Gr}}_G^{\lambda}$ are called (spherical) affine Schubert varieties. We have the following closure relations between the strata: if $\mu, \lambda \in X_*(T)^+$ then \[\mathrm{Gr}_G^{\mu} \subset \overline{\mathrm{Gr}}_G^{\lambda}\] if and only if $\mu \leq \lambda$, that is $\lambda-\mu= \sum a_i \alpha_i^{\vee}$ with $a_i \in \mathbb{Z}^{\geq 0}$.

The above action of $\mathbb{T}$ preserves each affine Schubert cell and each affine Schubert variety, so we have an action of $\mathbb{T}$ on $\overline{\mathrm{Gr}}_G^{\lambda}$. 

The $\mathbb{T}$-fixed subscheme $(\overline{\mathrm{Gr}}_G^{\lambda})^{\mathbb{T}}$ is a reduced discrete scheme: \[(\overline{\mathrm{Gr}}_G^{\lambda})^{\mathbb{T}} = \bigsqcup_{w \in W, \mu \leq \lambda \in X_*(T)^+} t^{w \mu}.\]

In contrast, the $T$-fixed subscheme $(\overline{\mathrm{Gr}}_G^{\lambda})^T$ is a non-reduced scheme such that \[(\overline{\mathrm{Gr}}_G^{\lambda})^T_{\text{red}} = (\overline{\mathrm{Gr}}_G^{\lambda})^{\mathbb{T}}.\] In other words $(\overline{\mathrm{Gr}}_G^{\lambda})^T$ is a non-reduced scheme supported on $ \bigsqcup_{w \in W, \mu \leq \lambda \in X_*(T)^+} t^{w \mu}$.

\subsection{Slices}
As in \cite{KWWY} we define the slices at $\mu$ to $\overline{\mathrm{Gr}}^{\lambda}_G$ as follows: \[\overline{W}_{\mu}^{\lambda} := \overline{\mathrm{Gr}}_G^{\lambda} \cap L^{--}Gt^{\mu}. \] This intersection takes place in the thick affine Grassmannian but we can understand \[\overline{W}_{\mu}^{\lambda} \rightarrow \overline{\mathrm{Gr}}_G^{\lambda}\] as a closed subscheme of the affine Schubert variety. 

\begin{remark} When $\mu$ is not a dominant coweight, our $\overline{W}^{\lambda}_{\mu}$ do not agree with the \textit{generalized slices} found in Coulomb branch literature.
\end{remark}

The slices enjoy the following properties, see \cite[Section 2B,2C]{KWWY}.
\begin{proposition}
\begin{itemize}
\item $\overline{W}_{\mu}^{\lambda}$ are affine schemes of finite type over $\mathbb{C}$, and are normal and Cohen-Macaulay.
\item $\overline{W}_{\mu}^{\lambda}$ carry algebraic actions of $\mathbb{T}$ such that $(\overline{W}_{\mu}^{\lambda})^{\mathbb{T}}=t^{\mu}$, equipped with the reduced scheme structure.
\item $\overline{W}_{\mu}^{\lambda}$ are conical symplectic singularities  satisfying our assumptions from section \ref{symplecticsingassump}. In particular there is a symplectic form on $(\overline{W}_{\mu}^{\lambda})_{reg}$ of $\mathbb{T}$-weight $\varpi_r$. 
\end{itemize}
\end{proposition}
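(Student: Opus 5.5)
The plan is to deduce all three items from the geometry of the big cell $L^{--}G\cdot t^{\mu}\subset \mathrm{Gr}_G$, reducing the singularity statements to known facts about affine Schubert varieties. Most of this is in \cite[Section 2B,2C]{KWWY}, and at the hardest point I would simply quote it.

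\textbf{Step 1 (affine, finite type).} I would use that the orbit map $L^{--}G\to \mathrm{Gr}_G$, $g\mapsto g t^{\mu}$, is an open embedding onto the translated big cell $L^{--}G\, t^{\mu}L^+G/L^+G$. This is the translate by $t^{\mu}$ of the standard fact that $L^{--}G\times L^+G\to LG$ is an open immersion.
\begin{itemize}
\item $L^{--}G$ is an affine scheme, so $\overline{W}^{\lambda}_{\mu}=\overline{\mathrm{Gr}}^{\lambda}_G\cap L^{--}Gt^{\mu}$ is closed in an affine scheme, hence affine.
\item It is also open in the projective finite-type variety $\overline{\mathrm{Gr}}^{\lambda}_G$, hence of finite type over $\mathbb{C}$.
\end{itemize}

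\textbf{Step 2 (normal, Cohen--Macaulay).} I would write $L^{--}G\cap t^{\mu}L^+Gt^{-\mu}$ for the stabilizer-complement group acting on the orbit through $t^{\mu}$. The multiplication map
\[\bigl(L^{--}G\cap t^{\mu}L^+Gt^{-\mu}\bigr)\times \overline{W}^{\lambda}_{\mu}\longrightarrow \overline{\mathrm{Gr}}^{\lambda}_G\]
should be an isomorphism onto an open neighbourhood of $t^{\mu}$ in $\overline{\mathrm{Gr}}^{\lambda}_G$. The first factor is an affine space identified with an open piece of $\mathrm{Gr}^{\mu}_G$. Affine Schubert varieties are normal and Cohen--Macaulay (Faltings). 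Both properties descend from a product with a smooth factor to the other factor, so $\overline{W}^{\lambda}_{\mu}$ is normal and Cohen--Macaulay.

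\textbf{Step 3 ($\mathbb{T}$-action and fixed points).}
\begin{itemize}
\item $T$ normalizes $L^{--}G$ and fixes $t^{\mu}$, so $T$ preserves the slice.
\item For loop rotation, $s\cdot t^{\mu}=s^{\mu}t^{\mu}\equiv t^{\mu}\pmod{L^+G}$, because $s^{\mu}\in T$ commutes with $t^{\mu}$. Rotation also preserves $L^{--}G$ and $\overline{\mathrm{Gr}}^{\lambda}_G$, so $\Gmrot$ preserves the slice.
\item On coordinate functions of $L^{--}G$, which are coefficients of $t^{-k}$ with $k>0$, rotation acts with strictly positive weights. Hence $\mathbb{C}[\overline{W}^{\lambda}_{\mu}]$ is positively graded with degree-zero part $\mathbb{C}$, and $\Gmrot$ repels everything from the single point $t^{\mu}$.
\item Therefore $(\overline{W}^{\lambda}_{\mu})^{\mathbb{T}}\subset(\overline{W}^{\lambda}_{\mu})^{\Gmrot}=\{t^{\mu}\}$, with the reduced structure.
\end{itemize}

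\textbf{Step 4 (symplectic singularity; the main obstacle).} First I would put a Poisson structure on $\overline{W}^{\lambda}_{\mu}$ from the Manin triple $(\mathfrak{g}((t)),\mathfrak{g}[[t]],t^{-1}\mathfrak{g}[t^{-1}])$. The symplectic leaves are the intersections $\mathrm{Gr}^{\nu}\cap L^{--}Gt^{\mu}$, so the regular locus is symplectic. Then I would check that the form is $T$-invariant and has $\Gmrot$-weight $1$, by tracking the weight of the residue pairing under rotation; this gives the normalization $\varpi_r$.

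The hard point is showing that $\sigma$ extends over a resolution. Here I would use the convolution resolutions $\overline{W}^{\underline{\lambda}}_{\mu}\to\overline{W}^{\lambda}_{\mu}$ attached to a decomposition of $\lambda$ into minuscule coweights. In types $A,D,E$ these are symplectic, or at least $\sigma$ pulls back to a regular $2$-form, as shown in \cite{KWWY} and in the Coulomb-branch description of Braverman--Finkelberg--Nakajima. Any resolution is dominated by one of these, so the extension property holds for all resolutions, and I would take it from \cite{KWWY} rather than reprove it.
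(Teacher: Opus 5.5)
The paper gives no argument of its own here; it simply refers to \cite[Section 2B,2C]{KWWY}. At the top level your plan (reconstruct the easy parts, cite KWWY for the hard part) is in the same spirit, but the reconstruction contains concrete errors.

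\textbf{Steps 1--2.} As soon as $\mu$ is not central (e.g.\ $\mu\neq 0$ for $G$ semisimple), the map $L^{--}G\to \mathrm{Gr}_G$, $g\mapsto gt^{\mu}$, is neither injective nor open.
\begin{itemize}
\item Its fibres are cosets of the stabilizer $L^{--}G\cap t^{\mu}L^{+}Gt^{-\mu}$, whose Lie algebra $\bigoplus_{\langle\mu,\alpha\rangle<0}\mathfrak{g}_\alpha\otimes\mathrm{span}(t^{\langle\mu,\alpha\rangle},\dots,t^{-1})$ is nonzero.
\item The orbit $L^{--}Gt^{\mu}$ has codimension $\langle 2\rho,\mu_+\rangle=\dim \mathrm{Gr}^{\mu_+}$.
\item The translate of the big cell is $t^{\mu}L^{--}GL^{+}G/L^{+}G$, which is the orbit of $t^{\mu}L^{--}Gt^{-\mu}$, not of $L^{--}G$.
\end{itemize}
Consequently $\overline{W}^{\lambda}_{\mu}$ is not open in $\overline{\mathrm{Gr}}^{\lambda}$ (think of $\overline{W}^{\lambda}_{\lambda}=\{t^{\lambda}\}$). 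You also cannot view it as a closed subscheme of $L^{--}G$ via $g\mapsto gt^{\mu}$, as Steps 1 and 3 do. And Step 1 is incompatible with the positive-dimensional factor in Step 2.

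Step 2 also uses the wrong group. $L^{--}G\cap t^{\mu}L^{+}Gt^{-\mu}$ is exactly the group fixing $t^{\mu}$, so its orbit is a point. It is also not contained in $L^{+}G$, so it does not preserve $\overline{\mathrm{Gr}}^{\lambda}$. The transversal factor you want is $L^{+}G\cap t^{\mu}L^{--}Gt^{-\mu}$, with Lie algebra $\bigoplus_{\langle\mu,\alpha\rangle>0}\mathfrak{g}_\alpha\otimes \mathrm{span}(1,t,\dots,t^{\langle\mu,\alpha\rangle-1})$. Its orbit through $t^{\mu}$ is an affine-space neighbourhood of $t^{\mu}$ in $\mathrm{Gr}^{\mu_+}$; this is the paper's $w_{\mu}(I)\cdot t^{\mu}$ in $U^{\lambda}_{\mu}$.

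To get the product neighbourhood, factor $t^{\mu}L^{--}Gt^{-\mu}=(L^{+}G\cap t^{\mu}L^{--}Gt^{-\mu})\cdot(L^{--}G\cap t^{\mu}L^{--}Gt^{-\mu})$. The second factor maps isomorphically onto $L^{--}Gt^{\mu}$, so the translated big cell is a product of the two factors, and intersecting with $\overline{\mathrm{Gr}}^{\lambda}$ gives $U^{\lambda}_{\mu}$. With this in place your conclusions survive:
\begin{itemize}
\item $\overline{W}^{\lambda}_{\mu}$ is closed in the affine open $\overline{\mathrm{Gr}}^{\lambda}\cap t^{\mu}L^{--}GL^{+}G/L^{+}G$, hence affine of finite type.
\item Normality and the Cohen--Macaulay property descend from $U^{\lambda}_{\mu}$.
\item The positivity argument of Step 3 works with the coordinates of the rotation-stable closed subgroup $L^{--}G\cap t^{\mu}L^{--}Gt^{-\mu}\subset L^{--}G$. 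Loop rotation sends $gt^{\mu}$ to $\mathrm{rot}_s(g)t^{\mu}$ modulo $L^+G$, since $s^{\mu}\in T$ commutes with $t^{\mu}$.
\end{itemize}

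\textbf{Step 4.} The convolution varieties $\overline{W}^{\underline{\lambda}}_{\mu}$ are resolutions only when $\lambda$ is a sum of minuscule coweights. That holds for every $\lambda$ in type $A$, but not in general: it fails for $\lambda=\theta^{\vee}$ in type $D_n$ with $n\geq 4$, and for every $\lambda\neq 0$ in $E_8$, which has no nonzero minuscule coweights. The proposition, however, is stated for arbitrary $G$. Also, ``any resolution is dominated by one of these'' is false. The correct reduction is that the extension property for one resolution implies it for all: pass to a common refinement and use Hartogs on the smooth target.

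A route that works uniformly is the following.
\begin{itemize}
\item $\overline{\mathrm{Gr}}^{\lambda}$ has rational singularities (Faltings; see also \cite{Kum}), hence so does $\overline{W}^{\lambda}_{\mu}$, via $U^{\lambda}_{\mu}$.
\item The form on the open leaf $\mathrm{Gr}^{\lambda}\cap L^{--}Gt^{\mu}$ extends to a symplectic form on the whole regular locus. The other leaves have codimension $\langle 2\rho,\lambda-\nu\rangle\geq 2$, and the zero locus of $\sigma^{d/2}$ on the regular locus would be a divisor, so it is empty. Your sentence ``so the regular locus is symplectic'' needs exactly this argument.
\item Namikawa's criterion then finishes: a normal variety with rational singularities and a symplectic form on its regular locus is a symplectic singularity. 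Gorenstein is automatic here, since $\sigma^{d/2}$ trivializes $\omega$.
\end{itemize}
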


In particular, the $\overline{W}_{\mu}^{\lambda}$ satisfy our assumptions from Section \ref{Sec2} and we have the following corollary.

We fix $\{\underline{x}_{\mu}^{\lambda} \}$ a minimal set of homogeneous generators of the ideal of $I^T(\overline{W}^{\lambda}_{\mu})$; let $n$ be the cardinality of this generating set. We have the following derived intersection 

\begin{equation}
\begin{tikzcd}
\mathbb{V}^{\bullet}(I^T(\overline{W}^{\lambda}_{\mu}), \underline{x}^{\lambda}_{\mu})\arrow[r] \arrow[d] & \overline{W}^{\lambda}_{\mu} \arrow[d] \\
\Spec  \mathbb{C} \arrow["0",r] & (T_{\mu} \overline{W}^{\lambda}_{\mu})^{\neq 0}.\\
\end{tikzcd}
\end{equation}
We will usually shorten the notation by writing simply $\mathbb{V}^{\bullet}(I^T(\overline{W}^{\lambda}_{\mu}))$.
Now, applying Theorem \ref{thm2} and Corollaries \ref{pcare} and \ref{amplitude} to the above diagram, we obtain the following.
\begin{corollary}
The derived intersection $\mathbb{V}^{\bullet}(I^T(\overline{W}^{\lambda}_{\mu})) \rightarrow \overline{W}^{\lambda}_{\mu}$ has structure complex with cohomology concentrated in degrees 

\[[\dim  \overline{W}_{\mu}^{\lambda} - \dim  (T_{\mu} \overline{W}_{\mu}^{\lambda})^{ \neq 0}, 0]. \] In particular, \[(\overline{W}_{\mu}^{\lambda})^{T} \rightarrow \overline{W}_{\mu}^{\lambda} \] is a complete intersection if and only if the dimension $\dim  (T_{\mu} \overline{W}_{\mu}^{\lambda})^{\neq 0}$ is equal to $\dim  (\overline{W}_{\mu}^{\lambda})$. 

Moreover, writing $d= \dim  \overline{W}^{\lambda}_{\mu}$ and $n= \dim  (T_0 \overline{W}^{\lambda}_{\mu})^{\neq 0}$, we have $\mathbb{T}$-equivariant isomorphisms:
\[H^{-i}(\mathscr{O}_{\mathbb{V}^{\bullet}(I^T (\overline{W}^{\lambda}_{\mu}))})^* \simeq H^{d -n+i}(\mathscr{O}_{\mathbb{V}^{\bullet}(I^T(\overline{W}^{\lambda}_{\mu}))})\Bigl(\mathrm{wt}\left(\mathrm{det}((T_0\overline{W}^{\lambda}_{\mu})^{\neq 0})\right)+ \frac{d}{2} \varpi_r\Bigr).\]
\end{corollary}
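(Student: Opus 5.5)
The plan is to specialise the general results of Section \ref{Sec5} to $X=\overline{W}^{\lambda}_{\mu}$ and then add one extra argument for the complete intersection criterion. By the preceding proposition (from \cite{KWWY}), $\overline{W}^{\lambda}_{\mu}$ is a conical affine symplectic singularity. It satisfies the assumptions of Section \ref{symplecticsingassump}, with $\mathbb{T}$-fixed point $t^{\mu}$ playing the role of $0$. So $A=\mathbb{C}[\overline{W}^{\lambda}_{\mu}]$ is Noetherian, positively graded for $\Gmrot$, and has $A_0=\mathbb{C}$. By Proposition \ref{prop:basis}, a minimal homogeneous generating set $\underline{x}^{\lambda}_{\mu}$ of $I^T$ has cardinality $n=\dim (T_{\mu}\overline{W}^{\lambda}_{\mu})^{\neq 0}$. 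Proposition \ref{prop:generator} identifies the target of $\overline{W}^{\lambda}_{\mu}\to\mathbb{A}^n$ equivariantly with $(T_{\mu}\overline{W}^{\lambda}_{\mu})^{\neq 0}$.

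\textbf{Concentration and duality.} The concentration statement is then literally Corollary \ref{amplitude}. The duality isomorphism is literally Corollary \ref{pcare}, with $d_X=d$ and the twist $\mathrm{wt}(\det((T_0\overline{W}^{\lambda}_{\mu})^{\neq 0}))+\frac{d}{2}\varpi_r$. One only needs to note that the Koszul complex $K_A^{\bullet}(\underline{x}^{\lambda}_{\mu})$ represents $\mathscr{O}_{\mathbb{V}^{\bullet}}$. For completeness I would recall where the amplitude bound comes from. The Koszul complex lives in degrees $[-n,0]$, so $H^{j}=0$ for $j>0$. Applying the duality with $i<0$ shows that $H^{d-n+i}$ is dual to $H^{-i}$ with $-i>0$, hence vanishes. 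This kills all degrees below $d-n$.

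\textbf{Complete intersection criterion.} The claim is that $(\overline{W}^{\lambda}_{\mu})^T\to\overline{W}^{\lambda}_{\mu}$ is a complete intersection iff $n=d$.
\begin{itemize}
\item If $n=d$, the interval $[d-n,0]$ is $\{0\}$. Then the Koszul complex on $\underline{x}$ is acyclic in negative degrees. In the graded-local setting (positively graded ring, homogeneous elements in $\mathfrak{m}_A$) this means $\underline{x}$ is a regular sequence, so the embedding is a complete intersection.
\item Conversely, suppose $I^T$ is generated by some regular sequence. Localising at $t^{\mu}$, and using that $X^T$ is supported at the single point $t^{\mu}$, the local ring $A_{\mathfrak{m}}/I^T$ has dimension $0$. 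Hence any regular sequence generating $I^T A_{\mathfrak{m}}$ has length $\mathrm{ht}(I^T)=d$. Here I use that $A$ is Cohen--Macaulay, so height equals $\dim A_{\mathfrak m}-\dim A_{\mathfrak m}/I^T=d$.
\item Any regular sequence generating the ideal is a minimal generating set in the local ring. Graded Nakayama shows the minimal number of generators of $I^T$ computed locally agrees with the graded count $n$. Hence $n=d$.
\end{itemize}
An alternative route for this direction: when $n>d$, the top group $H^{d-n}$ is dual to $H^0=\mathbb{C}[X^T]\neq 0$, so the Koszul complex on a minimal generating set is not acyclic. Since all minimal generating sets give isomorphic Koszul complexes (by the Lemma after Lemma \ref{changeofbasis}), no minimal generating set is regular.

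\textbf{Expected obstacle.} The main care point is this last equivalence: checking that ``complete intersection'' (global, or local at $t^{\mu}$) matches acyclicity of the Koszul complex on a \emph{minimal homogeneous} generating set. This rests on graded Nakayama and the standard fact that, for homogeneous elements of positive degree in a positively graded Noetherian ring, Koszul acyclicity is equivalent to regularity. Everything else is a direct substitution into Section \ref{Sec5}.
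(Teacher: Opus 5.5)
Your proposal is correct and is essentially the paper's argument: the paper obtains this corollary by specializing Theorem \ref{thm2} and Corollaries \ref{pcare} and \ref{amplitude} to $X=\overline{W}^{\lambda}_{\mu}$, with the hypotheses supplied by the preceding proposition. Your justification of the complete-intersection equivalence fills in a step the paper leaves implicit. For ``if'' you use Koszul acyclicity $\Leftrightarrow$ regularity for positive-degree homogeneous elements, and for ``only if'' you note that a generating regular sequence has length $\mathrm{ht}(I^T)=d$ and is minimal, so $n=d$ by graded Nakayama; both are correct. In its examples the paper instead gets ``if'' by noting that $n=d$ generators form a homogeneous system of parameters in the Cohen--Macaulay ring $\mathbb{C}[\overline{W}^{\lambda}_{\mu}]$.
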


See section \ref{examples} for many examples.

\section{Derived enhancements and character formula}\label{K-Theory}

\newcommand{\aff}{\mathrm{aff}}
This section is a digression, in which we provide a representation-theoretic meaning for our derived intersections in terms of characters.
Let $G$ be a simply connected simple algebraic group. We fix some basic notions with regard to affine Kac-Moody groups. Let $\widehat{LG}$ be the universal central extension of $LG$, with central torus denoted by $\Gm^{\mathrm{cen}}$. Define the affine Kac-Moody group to be $\widetilde{LG}:=\widehat{LG} \rtimes \Gmrot$; for any subgroup $H$ of $LG$, we also write $\widetilde{H}$ for its preimage in $\widetilde{LG}$. The Kac-Moody maximal torus is $\widetilde{T}=T\times\Gm^{\mathrm{cen}}\times\Gmrot$, and the affine character lattice is $X^*(\widetilde{T})=X^*(T)\oplus \Z \Lambda_0 \oplus \Z \varpi_r$, where $\Lambda_0$ is the generator of $X^*(\Gm^{\mathrm{cen}})$ and $\varpi_r$ is the generator of $X^*(\Gmrot)$. Note that in Kac-Moody literature, $\varpi_r$ is usually denoted $\delta$. Let $W_{\aff}:=W\ltimes X_*(T)$ be the affine Weyl group; we write the translation associated to $\lambda$ in $W_\aff$ by $\tau_\lambda$. For each element $w =w_{\mathrm{f}} \tau_{\lambda} \in W_{\aff}$, we can find some lift $\dot{w}= \dot{w_{\mathrm{f}}}t^{\lambda}$ in $LG$, and we have the Bruhat decomposition
\[\widetilde{LG }= \bigsqcup_{w\in W_\aff} \widetilde{I} \dot{{w}} \widetilde{I},\ \  LG = \bigsqcup_{w \in W_\aff} I \dot{{w}} I.\]
Since we assumed $G$ is simply connected, $W_\aff$ is a Coxeter group generated by simple reflections $\{s_0,\dots,s_n\}$. 

Recall that the Picard group of $\Gr_G$ is isomorphic to $\Z$, and generated by the ample line bundle $\cL_{\Lambda_0}:=\widetilde{LG} \times^{\widetilde{L^+G}}\C_{-\Lambda_0}$; we also have $\cL_{n\Lambda_0}:=\cL_{\Lambda_0}^n$. We will abuse notation and denote the restriction of $\cL_{n\Lambda_0}$ to subschemes of $\Gr_G$ by the same symbol $\cL_{n\Lambda_0}$.

According to \cite[Sec. 8]{Kum}, when $m\ge 0$, the higher cohomology groups $H^i(\overline{\mathrm{Gr}}^{\lambda}, \mathcal{L}_{m \Lambda_0})$ vanish and the dual space to global sections
\[D(m,\lambda):=H^0(\overline{\Gr}_G^{\lambda}, \cL_{m\Lambda_0})^*,\]
is called a (spherical) affine Demazure module. The goal of this section is to give a formula of the $\mathbb{T}$-character of $H^0(\overline{\Gr}_G^{\lambda}, \cL_{m\Lambda_0})$ using the tools in equivariant K-theory, involving our derived intersections $\mathbb{V}^{\bullet}(I^T(\overline{W}^{\lambda}_{\mu}))$.

\subsection{Computations of localization maps}
For $Y$ a (derived) scheme with $H$-action, we write $K^H(Y)$ for the Grothendieck group of $D^{H}_{\coh}(Y)$. For an object $\mathcal{F} \in D^H_{\coh}(Y)$ we write $[\mathcal{F}]$ for the class of $\mathcal{F}$ in $K^H(Y)$. Given an exact functor $F$ between two equivariant derived categories of coherent sheaves, we use the same symbol to denote the induced functor on $K$-groups. Let $\pi: X \rightarrow \mathrm{pt}$ be a proper complex variety with a $\mathbb{T}$-action. Assume the reduced structure $(X^\mathbb{T})_{\text{red}}$ of the $\bT$-fixed subscheme consists of finitely many points. We write $i$ for the closed embedding $(X^\mathbb{T})_{\text{red}} \hookrightarrow X$. In this context, we can use equivariant $K$-theory to obtain character formulas in the following sense: if a $\mathbb{T}$-equivariant coherent sheaf $\mathcal{F}$ on $X$ has cohomology concentrated at degree $0$, then
\begin{equation}\label{eq:character_formula}
	[\Gamma(X,\mathcal{F})]=\pi_*([\mathcal{F}])=\pi_*^\mathbb{T}\circ(i_*)^{-1}([\mathcal{F}])\in K^{\mathbb{T}}(\mathrm{pt})_{\mathrm{loc}},
\end{equation}
which gives a character formula of $\Gamma(X,\mathcal{F})$.

For convenience, we furthermore assume the $\mathbb{T}$-action is linearizable, i.e. for each $\mu \in (X^\mathbb{T})_{\text{red}}$, there exists a $\mathbb{T}$-invariant affine open subset $U_{\mu} \subset X$ containing $\mu$ as the unique $\bT$-fixed point, and 
\[X=\bigcup_{\mu \in (X)^\bT_{\text{red}}} U_{\mu}.\] We write $i_{\mu}$ for the closed embedding $\mu \hookrightarrow U_\mu$, and $j_\mu$ for the open embedding $U_{\mu}\hookrightarrow X$.
Using Thomason localization theorem \cite{Th92}, we have isomorphisms
\[i_*=\bigoplus_{\mu \in (X^\mathbb{T})_{\text{red}}} (j_{\mu}\circ i_{\mu})_*: K^{\mathbb{T}}((X^\mathbb{T})_{\text{red}})_{\mathrm{loc}}=\bigoplus_{\mu \in (X^\mathbb{T})_{\text{red}}} K^{\mathbb{T}}(\mu)_{\mathrm{loc}} \xrightarrow{\simeq} K^{\mathbb{T}}(X)_{\mathrm{loc}},\]
\[i_{\mu*}:K^{\mathbb{T}}(\mu)_{\mathrm{loc}} \xrightarrow{\simeq} K^{\mathbb{T}}(U_{\mu})_{\mathrm{loc}};\]
here $(-)_{\mathrm{loc}}$ means $\otimes_{R(\mathbb{T})} \mathrm{Frac}(R(\mathbb{T}))$. We also have the following commutative diagram, where each arrow is an isomorphism
% https://q.uiver.app/#q=WzAsMyxbMCwwLCJcXGJpZ29wbHVzX3tcXGFscGhhIFxcaW4gKFhee1xcbWF0aGJie1R9fSlfe1xcdGV4dHtyZWR9fX0gS157XFxtYXRoYmJ7VH19KFxcYWxwaGEpX3tcXG1hdGhybXtsb2N9fSJdLFsyLDAsIktee1xcbWF0aGJie1R9fShYKV97XFxtYXRocm17bG9jfX0uIl0sWzEsMSwiXFxiaWdvcGx1c197XFxhbHBoYSBcXGluIChYXlxcbWF0aGJie1R9KV97XFx0ZXh0e3JlZH19fSBLXntcXG1hdGhiYntUfX0oVV97XFxhbHBoYX0pX3tcXG1hdGhybXtsb2N9fSJdLFswLDEsImlfKj1cXGJpZ29wbHVzX3tcXGFscGhhIFxcaW4gKFheXFxtYXRoYmJ7VH0pX3tcXHRleHR7cmVkfX19IChqX3tcXGFscGhhfVxcY2lyYyBpX3tcXGFscGhhfSlfKiJdLFswLDIsIlxcYmlnb3BsdXNfe1xcYWxwaGEgXFxpbiAoWF5cXG1hdGhiYntUfSlfe1xcdGV4dHtyZWR9fX0gaV97XFxhbHBoYSp9IiwxXSxbMSwyLCJcXGJpZ29wbHVzX3tcXGFscGhhIFxcaW4gKFheXFxtYXRoYmJ7VH0pX3tcXHRleHR7cmVkfX19al97XFxhbHBoYX1eKiIsMV1d
\[\begin{tikzcd}
	{\bigoplus_{\mu \in (X^{\mathbb{T}})_{\text{red}}} K^{\mathbb{T}}(\mu)_{\mathrm{loc}}} && {K^{\mathbb{T}}(X)_{\mathrm{loc}}.} \\
	& {\bigoplus_{\mu \in (X^\mathbb{T})_{\text{red}}} K^{\mathbb{T}}(U_{\mu})_{\mathrm{loc}}}
	\arrow["{i_*=\bigoplus_{\mu \in (X^\mathbb{T})_{\text{red}}} (j_{\mu}\circ i_{\mu})_*}", from=1-1, to=1-3]
	\arrow["{\bigoplus_{\mu \in (X^\mathbb{T})_{\text{red}}} i_{\mu*}}"{description}, from=1-1, to=2-2]
	\arrow["{\bigoplus_{\mu \in (X^\mathbb{T})_{\text{red}}}j_{\mu}^*}"{description}, from=1-3, to=2-2]
\end{tikzcd}\]
Thus, the localization map
\[(i_*)^{-1}:  K^{\mathbb{T}}(X)_{\mathrm{loc}} \to \bigoplus_{\mu \in (X^\mathbb{T})_{\text{red}}} K^{\mathbb{T}}(\mu)_{\mathrm{loc}}\]
can be computed by the composition
\[K^{\mathbb{T}}(X)_{\mathrm{loc}} \xrightarrow{\bigoplus_{\mu \in (X^\mathbb{T})_{\text{red}}} j_{\mu}^*} \bigoplus_{\mu \in (X^\mathbb{T})_{\text{red}}} K^{\mathbb{T}}(U_{\mu})_{\mathrm{loc}} \xrightarrow{\bigoplus_{\mu \in (X^\mathbb{T})_{\text{red}}} (i_{\mu *})^{-1}} \bigoplus_{\mu \in (X^\mathbb{T})_{\text{red}}} K^{\mathbb{T}}(\mu)_{\mathrm{loc}}.\]
So it suffices to compute
\[(i_{\mu*})^{-1}:   K^{\mathbb{T}}(U_{\mu})_{\mathrm{loc}} \to K^{\mathbb{T}}(\mu)_{\mathrm{loc}}.\]
%Let $a_{\mu}:U_{\mu}\to \mu$ be the projection to a point. Then we know 
%\[a_{\mu*}\circ i_{\mu*}=\mathrm{id}.\]
Note that
\[\Gamma(U_\mu,i_{\mu*}(-))=\mathrm{id}.
\]
So if we can make sense of $\Gamma(U_\mu,-)$ for elements in $K^{\mathbb{T}}(U_{\mu})_{\mathrm{loc}}$, we find the inverse of $i_{\mu*}$. We have the following tools at our disposal to understand $\Gamma(U_{\mu}, -)$.
\begin{enumerate}
	\item We choose a set of homogeneous elements $x_1,\dots,x_k \in \C[U_{\mu}]$ such that the subscheme cut out by these elements is a finite subscheme set-theoretically supported on $\mu$. Let $K^{\bullet}(x_1,\dots,x_k)$ be the associated Koszul complex. Then for any $[\mathcal{F}]\in K^{\mathbb{T}}(U_{\mu})$, $[\mathcal{F}]\otimes [K^{\bullet}(x_1,\dots,x_k)]$ is supported on the finite thickening of $\mu$ defined by the $x_i$. Thus we can apply $\Gamma(U_\mu,-)$ to this product, and get
	\[(i_{\mu*})^{-1}([\mathcal{F}]\otimes [K^{\bullet}(x_1,\dots,x_k)])=\Gamma(U_\mu,[\mathcal{F}]\otimes [K^{\bullet}(x_1,\dots,x_k)])\in K^{\mathbb{T}}(\mu).\]
	Therefore, we have the following expressions for $(i_{\mu *})^{-1}([\mathcal{F}]):$ 
	\begin{equation}\label{eq:eulerclass}
	\begin{aligned}
		(i_{\mu*})^{-1}([\mathcal{F}])&=\frac{\Gamma(U_\mu,[\mathcal{F}]\otimes [K^{\bullet}(x_1,\dots,x_k)])}{[K^{\bullet}(x_1,\dots,x_k)]}\\
		&=\frac{\Gamma(U_\mu,[\mathcal{F}]\otimes [K^{\bullet}(x_1,\dots,x_k)])}{\prod_{i=1}^k(1-\mathrm{wt}(x_i))}\in K^{\mathbb{T}}(\mu)_{\mathrm{loc}}.
		\end{aligned}
	\end{equation}
	
	\item The second tool relies on the repelling action of $\Gmrot$. Assume $\mathbb{T}$ splits as $T \times \Gmrot$, and the $\Gmrot$-action repels $U_{\mu}$ from $\mu$, i.e. the $\Gmrot$-weights of $\C[U_{\mu}]$ are non-negative. Then for any coherent sheaf $\mathcal{F}$ on $U_{\mu}$, $\Gamma(U_\mu,\mathcal{F})$ is a finitely generated $\C[U_{\mu}]$-module, thus the $\Gmrot$-weights of this module are bounded below. We  write  $\Gamma(U_{\mu},\mathcal{F})_n$ for the subspace of $\Gamma(U_{\mu}, \mathcal{F})$ with $\Gmrot$-weight $n \varpi_r$, and we form the series:
	\begin{equation}\label{eq:a*}
		\Gamma(U_\mu,[\mathcal{F}]):=\mathrm{ch}^T(\Gamma(U_\mu,[\mathcal{F}])):= \bigoplus_{n\in \Z}[\Gamma(U_\mu,\mathcal{F})_n] e^{n\varpi_r} \in K^{T}(\mu)((e^{\varpi_r})).
	\end{equation}
	 Using the finitely generated property of $\Gamma(U_\mu,\mathcal{F})$, the element in \eqref{eq:a*} turns out to lie in $\mathrm{Frac}(R(\mathbb{T}))$, which yields another expression for $(i_{\mu*})^{-1}(\mathcal{F})$.
\end{enumerate}

\subsection{A character formula for $D(m,\lambda)$}
Now we apply the tools in previous subsection to our case. For $\mu$ an arbitrary coweight we write $\mu_+$ for the dominant translate and $w_{\mu}$ for the minimal representative in $W$ effecting this translation: $w_{\mu}\mu_+=  \mu$.

We have $$(\overline{\Gr}_G^{\lambda})^{\mathbb{T}}=\bigsqcup_{\mu_+ \leq \lambda} t^{\mu}.$$
 %we will denote this set of cocharacters also by $\mathcal{G}^{\lambda}$ {\color{red}(is there some better notation?)}. 
For each such $\mu$, there is an open neighborhood of $t^{\mu}$ ($\bT$-equivariantly) of the form 
%isomorphic to $\Gr^{\mu}_G \times \overline{\cW}^{\lambda}_{\mu}$ (see \cite[Prop. 2.3.9]{ZhuGrass}), and all such open subsets form an open cover of $\overline{\Gr}_{G}^{\lambda}$. For the dominant coweight $\mu$, define an affine open neighborhood of $t^{\mu}$ to be
$$U^{\lambda}_{\mu}:=w_{\mu}(I) \cdot t^{\mu} \times \overline{\cW}^{\lambda}_{\mu}.$$ The collection of $U_{\mu}^{\lambda}$ form an open covering of $\overline{\mathrm{Gr}}^{\lambda}$.
 Observe that $w_{\mu} (I)\cdot t^{\mu}$ is an affine space isomorphic to the tangent space $T_{{\mu}} \Gr^{\mu_+}$. 
Note that 
$$T_{\mu}\Gr^{\mu_+}=\mathfrak{g}[[t]]/(\mathrm{Ad}_{t^\mu}\mathfrak{g}[[t]]\cap \mathfrak{g}[[t]]);$$
the set of $\mathbb{T}$-weights of this vector space is
\[\{\alpha + n \varpi_r \mid \alpha \in \Phi_G, n\in \mathbb{N},\text{ such that } 0\le n\le \langle\check{\alpha},\mu\rangle-1\}.\]

Working $\mathbb{T} \times \mathbb{G}_m^{\mathrm{cen}}$ equivariantly, and using the formula \cite[Chapter 6]{Kac}, the restriction of the line bundle $\cL_{m\Lambda_0}$ to $U_{\mu}^{\lambda}$ is $$\mathscr{O}_{U^{\lambda}_{\mu}}\left(-\tau_{\lambda}\cdot m\Lambda_0\right)=\mathscr{O}_{U^{\lambda}_{\mu}}\left(-m(\Lambda_0+\iota(\mu)-\tfrac12 (\mu,\mu) \varpi_r)\right).$$
Here $(,)$ is the $W$-invariant bilinear form on $X_*(T)_{\mathbb{R}}:=X_*(T)\otimes_{\mathbb{Z}}\mathbb{R}$ such that the norm of each long root is $2$, and $\iota:X_*(T)_{\mathbb{R}} \to X^*(T)_{\mathbb{R}}$ is the map determined by $(,)$. As we work only $\mathbb{T}$-equivariantly, we may write 
\[\mathscr{O}_{U^{\lambda}_{\mu}}\left(-m(\iota(\mu)-\tfrac12 (\mu,\mu) \varpi_r)\right).\]

Combining \eqref{eq:eulerclass} and \eqref{eq:a*}, we obtain the localization formula for $[\mathscr{O}_{U^{\lambda}_{\mu}}]\in K^{\mathbb{T}}(U^{\lambda}_{\mu})$:
\begin{align*}
(i_{\mu*})^{-1}([\mathscr{O}_{U^{\lambda}_{\mu}}])&=(i_{\mu*})^{-1}([\mathscr{O}_{T_{t^\mu}\Gr^{\mu_+}}\boxtimes \mathscr{O}_{\overline{W}_{\mu}^{\lambda}}])\\
&=\frac{\mathrm{ch}^T(\C[\overline{W}_{\mu}^{\lambda}])}{\prod_{\alpha \in \Phi_G, 0\le n\le \langle\alpha,\mu\rangle-1}(1-e^{-n \varpi_r} e^{-\alpha})}\in \mathrm{Frac}(R(T)((e^{\varpi_r}))),
\end{align*}
which turns out to lie in $K^{\mathbb{T}}(t^{\mu})_{\mathrm{loc}}=\mathrm{Frac}(R(\mathbb{T}))$; we will just write the denominator as $\mathrm{eu}(T^*_{\mu}\Gr^{\mu_+})$, i.e. the Euler class of the $\mathbb{T}$-vector space $T^*_{\mu}\Gr^{\mu_+}$. Then the localization formula for $[\mathcal{L}_{m \Lambda_0}] \in K^{\mathbb{T}}(\overline{\Gr}_{G}^{\lambda})$ is 
\begin{equation*}\label{}
	\begin{aligned}
		(i_*)^{-1}(\mathcal{L}_{m \Lambda_0})&=\sum_{\mu\in (\overline{\Gr}_G^\lambda)^\bT} (i_{\mu *})^{-1} (\mathscr{O}_{U^{\lambda}_{\mu}}(-m(\iota(\mu) -\tfrac12(\mu,\mu)\varpi_r)))\\
		&=\sum_{\mu\in (\overline{\Gr}_G^\lambda)^\bT}e^{\frac{1}{2}m(\mu,\mu)\varpi_r}e^{-m\iota(\mu)}\frac{\mathrm{ch}^T(\C[\overline{W}_{\mu}^{\lambda}])}{\mathrm{eu}(T^*_{\mu}\Gr^{\mu_+})}t^{\mu}\in K^{\mathbb{T}}\left(\bigsqcup_{\mu \in (\overline{\Gr}_G^\lambda)^\bT} t^{\mu}\right)_{\mathrm{loc}}.
	\end{aligned}
\end{equation*}
We finally conclude using \eqref{eq:character_formula}: 

\begin{equation*}\label{eq:formula}
	[\Gamma(\overline{\Gr}_G^{\lambda},\cL_{m\Lambda_0})]=\sum_{\mu\in (\overline{\Gr}_G^\lambda)^\bT}e^{\frac{1}{2}m(\mu,\mu)\varpi_r}e^{-m\iota(\mu)}\frac{\mathrm{ch}^T(\C[\overline{W}_{\mu}^{\lambda}])}{\mathrm{eu}(T^*_{\mu}\Gr^{\mu_+})}.
\end{equation*}

Furthermore, we can use our derived enhancement $\mathbb{V}^\bullet (I^T(\overline{W}_\mu^\lambda))$ to describe $\mathrm{ch}^T(\C[\overline{\cW}_{\mu}^{\lambda}])$. As before, we choose a set of homogeneous generators $\{\underline{x}_\mu^\lambda\}$. Then according to \eqref{eq:eulerclass}, 
\[\mathrm{ch}^T(\C[\overline{\cW}_{\mu}^{\lambda}])=\frac{\mathrm{ch}^T(K^\bullet_{\C[\overline{\cW}_\mu^\lambda]}(\underline{x}_\mu^\lambda))}{\prod(1-\mathrm{wt}(x_\mu^\lambda))}=\frac{\mathrm{ch}^T(\C[\mathbb{V}^\bullet (I^T(\overline{W}_\mu^\lambda))])}{\prod(1-\mathrm{wt}(x_\mu^\lambda))}.\]
By Proposition \ref{prop:generator}, the images of $\{\underline{x}_\lambda^\mu\}$ in $(T^*_{\mu} \overline{\cW}_\mu^\lambda)^{\ne 0}$ form a basis, thus $\prod(1-\mathrm{wt}(x_\mu^\lambda))$ is exactly the Euler class $\mathrm{eu}((T^*_{\mu} \overline{\cW}_\mu^\lambda)^{\ne 0})$. We finally get the following character formula.

\begin{proposition} We have the following equality in $K^{\mathbb{T}}(\mathrm{pt})$:
\[[\Gamma(\overline{\Gr}_G^{\lambda},\cL_{m\Lambda_0})]=\sum_{\mu\in (\overline{\Gr}_G^\lambda)^\bT}e^{\frac{1}{2}m(\mu,\mu)\varpi_r}e^{-m\iota(\mu)}\frac{\mathrm{ch}^T(\C[\mathbb{V}^\bullet (I^T(\overline{W}_\mu^\lambda))])}{\mathrm{eu}(T^*_{\mu}\Gr^{\mu_+})\cdot \mathrm{eu}((T^*_{\mu} \overline{\cW}_\mu^\lambda)^{\ne 0})}.\]
\end{proposition}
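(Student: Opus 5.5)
The plan is to assemble the displayed identity from the two formulas obtained just before the statement, checking that every manipulation is legitimate in the localized ring $\mathrm{Frac}(R(\mathbb{T}))$. First I would invoke the character formula \eqref{eq:character_formula} for $X=\overline{\Gr}_G^{\lambda}$ and $\mathcal{F}=\cL_{m\Lambda_0}$. This is allowed because \cite[Sec. 8]{Kum} gives vanishing of the higher cohomology for $m\ge 0$, so $\pi_*[\cL_{m\Lambda_0}]=[\Gamma(\overline{\Gr}_G^{\lambda},\cL_{m\Lambda_0})]$. I would also need that the $\mathbb{T}$-action is linearizable, with charts $U^\lambda_\mu=w_\mu(I)\cdot t^\mu\times\overline{W}^\lambda_\mu$ covering $\overline{\Gr}_G^\lambda$ and each containing exactly one $\mathbb{T}$-fixed point. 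Then Thomason localization reduces the computation of $(i_*)^{-1}[\cL_{m\Lambda_0}]$ to computing $(i_{\mu*})^{-1}$ of the restriction $\mathscr{O}_{U^\lambda_\mu}(-m(\iota(\mu)-\tfrac12(\mu,\mu)\varpi_r))$ on each chart.

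Second, for each fixed point $t^\mu$ I would compute $(i_{\mu*})^{-1}[\mathscr{O}_{U^\lambda_\mu}]$ with tool (2), the graded character. This applies because $\Gmrot$ repels $U^\lambda_\mu$ from $t^\mu$. The ring of the product chart is a tensor product, so the character factors as
\[\mathrm{ch}^T(\C[\overline{W}^\lambda_\mu])\cdot \mathrm{eu}(T^*_\mu\Gr^{\mu_+})^{-1},\]
where the second factor is the character of the polynomial ring on the affine space $T_\mu\Gr^{\mu_+}$. Twisting by the character of the line bundle produces the prefactor $e^{\frac12 m(\mu,\mu)\varpi_r}e^{-m\iota(\mu)}$.

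Third, I would rewrite $\mathrm{ch}^T(\C[\overline{W}^\lambda_\mu])$ using tool (1), with the minimal homogeneous generators $\underline{x}^\lambda_\mu$ of $I^T(\overline{W}^\lambda_\mu)$. The justification is that the Koszul complex is a finite free resolution-type complex built from shifted copies of $A=\C[\overline{W}^\lambda_\mu]$, so additivity of characters gives $\mathrm{ch}^T(K^\bullet_A(\underline{x}))=\mathrm{ch}^T(A)\prod(1-\mathrm{wt}(x_i))$ as formal series. This identity holds in $R(T)((e^{\varpi_r}))$, since every $\Gmrot$-graded piece is finite dimensional and bounded below. By Proposition \ref{prop:basis}, the weights of the $x_i$ are exactly the weights of $(T^*_\mu\overline{W}^\lambda_\mu)^{\neq0}$, so the product equals $\mathrm{eu}((T^*_\mu\overline{W}^\lambda_\mu)^{\neq0})$. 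The numerator $\mathrm{ch}^T(K^\bullet_A(\underline{x}))$ is the Euler characteristic $\sum(-1)^i\mathrm{ch}^T H^i$, which is finite by Corollary \ref{amplitude} together with finiteness of $\mathbb{V}^\bullet$. It is therefore a genuine element of $R(\mathbb{T})$, namely $\mathrm{ch}^T(\C[\mathbb{V}^\bullet])$. Substituting this and summing over $\mu$ gives the formula.

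I expect the main obstacle to be justifying the passage between formal series in $R(T)((e^{\varpi_r}))$ and $\mathrm{Frac}(R(\mathbb{T}))$. That is, one must check that each localized term is the expansion of a rational function consistent with Thomason's inverse, and that the terms can be summed. My approach is to note that each Euler factor $1-e^{-\alpha-n\varpi_r}$ with a nonzero weight is invertible in both rings. The resulting expansions then agree, because the map $\mathrm{Frac}(R(\mathbb{T}))\to \mathrm{Frac}(R(T))((e^{\varpi_r}))$ is injective for the relevant denominators. Given this, the final identity follows, because the left side is a finite character and the equality holds after embedding into the larger field.
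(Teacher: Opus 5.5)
Your overall strategy coincides with the paper's. It uses the character formula \eqref{eq:character_formula} with Kumar's vanishing, Thomason localization on the charts $U^\lambda_\mu$, the prefactor $e^{\frac12 m(\mu,\mu)\varpi_r}e^{-m\iota(\mu)}$ from the restricted line bundle, and then rewrites $\mathrm{ch}^T(\C[\overline{W}^\lambda_\mu])$ through the Koszul complex on minimal generators and Proposition \ref{prop:generator}. Your Step 3 is fine: the factors $1-e^{\mathrm{wt}(x_i)}$ are units because each $x_i$ has positive $\Gmrot$-degree.

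The gap is in Step 2. For $\mu\neq 0$ it is false that $\Gmrot$ repels $U^\lambda_\mu$ from $t^\mu$. The paper's list of weights of $T_\mu\Gr^{\mu_+}$ contains $\alpha+0\cdot\varpi_r$ for every root with $\langle\mu,\alpha\rangle\ge 1$. These are tangent directions to the partial flag variety $G\cdot t^\mu$, which is pointwise fixed by loop rotation, since $g(st)^\mu L^+G=g\,t^\mu s^\mu L^+G=g\,t^\mu L^+G$. So $\C[U^\lambda_\mu]_0$ contains a polynomial ring in the corresponding coordinates, and the graded pieces $\Gamma(U^\lambda_\mu,\mathscr{O})_n$ are infinite-dimensional. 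Hence the series \eqref{eq:a*} for the whole chart, in particular ``the character of the polynomial ring on $T_\mu\Gr^{\mu_+}$'', is not defined.

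The remedy in your last paragraph does not help. In $R(T)((e^{\varpi_r}))$ the factors $1-e^{-\alpha}$ are not units. In $\mathrm{Frac}(R(T))((e^{\varpi_r}))$, the degree-zero coefficient of your series would be an infinite $T$-character, not an element of $\mathrm{Frac}(R(T))$. So the argument ``the graded character is inverse to $i_{\mu*}$'' cannot be run on $[\mathscr{O}_{U^\lambda_\mu}]$ as you set it up.

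The paper avoids this by \emph{combining} the two tools (``combining \eqref{eq:eulerclass} and \eqref{eq:a*}''). The Koszul complex on the linear coordinates of the affine factor $w_\mu(I)\cdot t^\mu\simeq T_\mu\Gr^{\mu_+}$ is a $\mathbb{T}$-equivariant resolution of $\mathscr{O}_{\{t^\mu\}\times\overline{W}^\lambda_\mu}$. Hence
\[\mathrm{eu}(T^*_\mu\Gr^{\mu_+})\cdot[\mathscr{O}_{U^\lambda_\mu}]=[\mathscr{O}_{\{t^\mu\}\times\overline{W}^\lambda_\mu}]\]
in $K^{\mathbb{T}}(U^\lambda_\mu)$. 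The graded character is then applied only to the right-hand side. Its $\Gmrot$-graded pieces are finite-dimensional, because $\C[\overline{W}^\lambda_\mu]$ is positively graded with degree-zero part $\C$. This gives $\mathrm{ch}^T(\C[\overline{W}^\lambda_\mu])/\mathrm{eu}(T^*_\mu\Gr^{\mu_+})$, which is exactly what your Step 2 needs. With that repair, the rest of your proposal goes through as written.
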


\subsection{A character formula from resolutions}

We may contrast the character formula above with a character formula coming from Bott-Samelson-Demazure-Hansen resolutions. Set $P_{i}:=I \dot{s_i} I \sqcup I\subset LG$ to be the parahoric corresponding to a simple reflection $s_i$ for $i=0,1,\dots,n$.

Let $\mathfrak{w}=s_{i_1} s_{i_2} \dots s_{i_k}$ be a reduced expression for $\tau_\lambda \in W_{\aff}$. Define
\[Z_{\mathfrak{w}}:=P_{i_1}\times^I P_{i_2}\times^I \dots P_{i_k}/I, \]
in other words, $Z_{\mathfrak{w}}$ is defined to be the quotient of $P_{i_1}\times\dots \times P_{i_k}$ by the right $I^{\times k}$-action:
\[(p_1,p_2,\dots,p_k)\cdot (b_1,b_2,\dots,b_k)=(p_1 b_1,b_1^{-1} p_2 b_2\dots,b_{n-1}^{-1}p_nb_n).\]
Then we have the Bott-Samelson-Demazure-Hansen (BDSH) resolution
\[\pi_{\mathfrak{w}}:Z_{\mathfrak{w}} \rightarrow \overline{\Gr}^{\lambda},\] \[[p_1,\dots,p_k]\mapsto [p_1\dots p_k];\]
c.f. \cite[Def. 7.1.3, Thm. 8.2.2]{Kum}.

We have a $\mathbb{T}$-action on $Z_{\mathfrak{w}}$ by multiplication on the left and the resolution $\pi_{\mathfrak{w}}$ is $\mathbb{T}$-equivariant. According to \cite[Thm. 8.2.2]{Kum}, $\pi_{\mathfrak{w}*} \mathscr{O}_{Z_{\mathfrak{w}}} \simeq \mathscr{O}_{\overline{Gr}^{\lambda}}$, thus by the projection formula we have
\[R \Gamma(\overline{\Gr}^\lambda, \cL_{m\Lambda_0}) \simeq R\Gamma(Z_\mathfrak{w},\pi_{\mathfrak{w}}^*\cL_{m\Lambda_0}).\] 

Since $Z_{\mathfrak{w}}$ is smooth with isolated fixed points, we may apply localization to compute this class. The following lemma is not hard to prove. %The $\mathbb{T}$-fixed points of $Z_{\mathfrak{w}}$ are indexed by subwords of $\mathfrak{w}$. 
\begin{lemma}
	The $\bT$-fixed points of $Z_{\mathfrak{w}}$ corresponds to the subwords of $\mathfrak{w}$: 
	\[Z_\mathfrak{w}^\bT=\{[s_{i_1}^{e_1},\dots,s_{i_k}^{e_k}] \colon e_j\in \{0,1\},j=1,\dots,k\}=\{\text{subwords } \mathfrak{v} \text{ of }\mathfrak{w}\},\]
	The $\bT$-weights of cotangent space $T^*_{\mathfrak{v}}Z_{\mathfrak{w}}$ for $\mathfrak{v}=[s_{i_1}^{e_1},\dots,s_{i_k}^{e_k}]\in Z_\mathfrak{w}^\bT$ are 
	\begin{equation*}
		\{s_{i_1}^{e_1}\dots s_{i_j}^{e_j} (\alpha_{i_j}),j=1,\dots,k\}\footnotemark;
	\end{equation*}
	\footnotetext{This is a character of $\widetilde{T}$; we restrict it to a $\mathbb{T}$-character; we will similarly abuse notations later.}
	here, $\alpha_i$ is the simple affine root corresponding to the simple reflection $s_i$. The $\bT$-weight of the fiber $\pi_\mathfrak{w}^*\cL_{m\Lambda_0}|_{\mathfrak{v}}$ is $-mv(\Lambda_0)$ for $v:=s_{i_1}^{e_1}\dots s_{i_k}^{e_k}$, the image in $W_{\aff}$ of the word $\mathfrak{v}$.
\end{lemma}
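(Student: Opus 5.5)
We treat $Z_{\mathfrak{w}}$ as an iterated $\mathbb{P}^1$-bundle and induct on the length $k$ of the word $\mathfrak{w}$. Let $\mathfrak{w}'=s_{i_2}\cdots s_{i_k}$. The projection $[p_1,\dots,p_k]\mapsto p_1 I\in P_{i_1}/I\simeq\mathbb{P}^1$ is $\mathbb{T}$-equivariant, and its fiber over $p_1I$ is $p_1\cdot Z_{\mathfrak{w}'}$, with $Z_{\mathfrak{w}'}$ embedded as $[p_2,\dots,p_k]\mapsto[1,p_2,\dots,p_k]$. We thus realize $Z_{\mathfrak{w}}=P_{i_1}\times^I Z_{\mathfrak{w}'}$ as a fiber bundle over $\mathbb{P}^1$ with fiber $Z_{\mathfrak{w}'}$.

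\textbf{Fixed points.} The $\mathbb{T}$-fixed points of $P_{i_1}/I$ are $I$ and $\dot s_{i_1}I$; this is the rank-one computation on $P_{i_1}/I\simeq \mathbb{P}^1$, where $\mathbb{T}$ acts with weight $\pm\alpha_{i_1}$ restricted to $\mathbb{T}$. These weights are nonzero because affine real roots $\alpha+n\varpi_r$ never restrict to zero on $\mathbb{T}$. A fixed point of $Z_{\mathfrak{w}}$ lies over one of these two points. Over $s_{i_1}^{e_1}I$ we may choose the representative $p_1=\dot s_{i_1}^{e_1}$, which normalizes $\mathbb{T}$. The fiber $\dot s_{i_1}^{e_1}Z_{\mathfrak{w}'}$ then carries the $\mathbb{T}$-action twisted by $s_{i_1}^{e_1}$, so its fixed points are $\dot s_{i_1}^{e_1}$ applied to the fixed points of $Z_{\mathfrak{w}'}$. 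By induction we obtain exactly the points $[s_{i_1}^{e_1},\dots,s_{i_k}^{e_k}]$, which are in bijection with subwords. The base case $k=1$ is the $\mathbb{P}^1$ computation itself.

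\textbf{Cotangent weights.} At a fixed point we use the $\mathbb{T}$-equivariant splitting
\[
T^*_{\mathfrak v}Z_{\mathfrak w}\simeq T^*_{s_{i_1}^{e_1}}(P_{i_1}/I)\oplus s_{i_1}^{e_1}\bigl(T^*_{\mathfrak v'}Z_{\mathfrak w'}\bigr).
\]
This splitting exists because the cotangent sequence of the bundle splits at a fixed point for a torus action, since torus representations are semisimple. The tangent space of $P_{i_1}/I$ at $I$ is the root space $\mathfrak g_{-\alpha_{i_1}}$, whose cotangent weight is $\alpha_{i_1}$. At the point $s_{i_1}I$ the cotangent weight is $s_{i_1}(\alpha_{i_1})=-\alpha_{i_1}$. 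Both cases are captured by the formula $s_{i_1}^{e_1}(\alpha_{i_1})$, which is the $j=1$ term. Applying $s_{i_1}^{e_1}$ to the inductive list $\{s_{i_2}^{e_2}\cdots s_{i_j}^{e_j}(\alpha_{i_j})\}_{j\ge2}$ gives the remaining terms. The sign convention (cotangent versus tangent, and $I$ versus the opposite Iwahori) must be matched with the paper's conventions at this step; this check is the only delicate point.

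\textbf{Line bundle fiber.} At the point $\mathfrak v$ we have $\pi_{\mathfrak w}(\mathfrak v)=\dot v\, L^+G$, and $\dot v$ normalizes $\widetilde T$. The fiber of $\cL_{m\Lambda_0}=\widetilde{LG}\times^{\widetilde{L^+G}}\C_{-m\Lambda_0}$ at $\dot v$ is identified with $\C_{-m\Lambda_0}$ via the representative $\dot v$. An element $t\in\widetilde T$ acts there by $t\dot v=\dot v(\dot v^{-1}t\dot v)$, that is, through the character $-m\Lambda_0\circ \mathrm{Ad}_{\dot v^{-1}}=-m\,v(\Lambda_0)$. Restricting to $\mathbb{T}$ gives the claimed weight. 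The same computation appears in the formula $-\tau_\lambda\cdot m\Lambda_0$ quoted from \cite{Kac} above, and pullback along $\pi_{\mathfrak w}$ does not change the fiber.

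I expect the main obstacle to be bookkeeping rather than ideas: keeping the conventions consistent (left versus right actions, positive versus negative affine roots, the twist by $\dot s_{i}^{e}$) so that the weights come out exactly as $s_{i_1}^{e_1}\cdots s_{i_j}^{e_j}(\alpha_{i_j})$ and not their negatives.
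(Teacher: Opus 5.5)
The paper gives no proof of this lemma; it only remarks that it is ``not hard to prove''. Your argument is correct, and it is presumably the standard one the authors have in mind: induct along the $\mathbb{P}^1$-fibration $Z_{\mathfrak{w}}=P_{i_1}\times^I Z_{\mathfrak{w}'}\to P_{i_1}/I$, using that $\dot s_{i_1}^{e_1}$ twists the torus action on the fiber.

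The convention check you flag comes out right. With $\mathrm{Lie}(P_i)=\mathrm{Lie}(I)\oplus\mathfrak{g}_{-\alpha_i}$ (including $i=0$, where $\mathfrak{g}_{-\alpha_0}=\mathfrak{g}_\theta t^{-1}$), the base case gives cotangent weight $s_{i_1}^{e_1}(\alpha_{i_1})$ with no sign flip. Iterating then yields exactly $s_{i_1}^{e_1}\cdots s_{i_j}^{e_j}(\alpha_{i_j})$.

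There is one point of care in your two steps. In the fixed-point and cotangent steps, the statement that $\dot s_i$ normalizes $\mathbb{T}$ should be read in $LG\rtimes\Gmrot$. In $\widehat{LG}$, conjugation picks up a central component. This is harmless because the center acts trivially on $Z_{\mathfrak{w}}$ and the affine roots have level zero. For the line bundle, by contrast, that central component matters. You handle it correctly by computing $-m\Lambda_0\circ\mathrm{Ad}_{\dot v^{-1}}$ on all of $\widetilde T$ before restricting to $\mathbb{T}$.
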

Again by localization (now on $Z_{\mathfrak{w}}$) we obtain a character formula 
\begin{equation*}
	\begin{aligned}
		[\Gamma(\overline{\Gr}^\lambda, \cL_{m\Lambda_0})]&=[\Gamma(Z_\mathfrak{w},\pi_{\mathfrak{w}}^*\cL_{m\Lambda_0})] \\ &= \sum_{\mathfrak{v}} \frac{e^{-mv(\Lambda_0)}}{\mathrm{eu} (T^*_{\mathfrak{v}} Z_{\mathfrak{w}})}\\
		&= \sum_{\mathfrak{v}}\frac{e^{-mv(\Lambda_0)}}{\mathrm{eu} (T^*_{\mathfrak{v}} Z_{\mathfrak{w}})}.
	\end{aligned}
\end{equation*}
We can rearrange the summation as
\[[\Gamma(\overline{\Gr}^\lambda, \cL_{m\Lambda_0})]=\sum_{\mu\in (\overline{\Gr}_G^\lambda)^\bT}\sum_{\mathfrak{v}\in \pi_{\mathfrak{w}}^{-1}(\mu)}\frac{e^{\frac{1}{2}m(\mu,\mu)\varpi_r}e^{-m\iota(\mu)}}{\mathrm{eu} (T^*_{\mathfrak{v}} Z_{\mathfrak{w}})}.\]
We find it interesting to compare this character formula with the previous character formula \eqref{eq:eulerclass}. The individual terms in the BSDH character formula are simple and familiar, involving weights of a line bundle at smooth fixed points in $Z_{\mathfrak{w}}$ and their cotangent spaces. The price to pay is the difficult Weyl group combinatorics of describing the words $\mathfrak{v} $ in the fiber $ \pi^{-1}_{\mathfrak{w}}(\mu)$.  In contrast, the character formula \eqref{eq:eulerclass} has terms indexed by the $T$-fixed points of $\overline{\mathrm{Gr}}^{\lambda}$, but involve the euler characteristic of our derived intersections $\mathbb{V}^{\bullet}(I^T(\overline{W}^{\lambda}_{\mu}))$. It seems the derived intersection $\mathbb{V}^{\bullet}(I^T(\overline{W}^{\lambda}_{\mu}))$ is packaging together the contributions of the various $\mathfrak{v} \in \pi^{-1}_{\mathfrak{w}}(\mu)$.

\section{Generalized Minors}\label{genminors}
There is one family of slices where we can be particularly explicit. This is the case where $G$ is $SL_{n+1}$ and $\mu=0$. In this case, the slice $\overline{W}^{\lambda}_{0}$ is an open dense affine neighborhood of $t^0$ in $\overline{\mathrm{Gr}}_{SL_{n+1}}^{\lambda}$. We begin with some material on matrix coefficients and then specialize to the $G=SL_{n+1}$ situation.

Let $G$ be a reductive algebraic group over $\mathbb{C}$. Let $V$ be an irreducible finite-dimensional algebraic representation. We have the following matrix coefficient functions $\Delta_{v_1,v_2} \in \mathscr{O}_G(G)$ for $v_1 \in V^*$, $v_2 \in V$: \[\Delta_{v_1, v_2}(g):=\langle v_1, g v_2 \rangle.\]  

By change of base we can also understand \[\Delta_{v_1, v_2} : G(R((t^{-1}))) \rightarrow R((t^{-1})),\] and we write $\Delta_{v_1,v_2}^{(s)}$ for the coefficients: for $g \in G(R((t^{-1})))$ we have
\[\Delta_{v_1, v_2}(g)=\sum_{-\infty}^N \Delta_{v_1, v_2}^{(s)}(g)t^{-s}.\]

 Let $J_0^{\lambda}$ be the ideal in $\mathscr{O}(L^{--}G)$ generated by $\Delta_{\beta, \nu}^{(s)}$ as $\beta, \nu$ range over weight bases for $V_{\varpi_i}^*=V_{\varpi_i^*}$ and $V_{\varpi_i}$ and where $s > \langle \lambda, \varpi_{i}^* \rangle$. Recall that we can embed $\overline{W}_0^{\lambda}$ as a closed subscheme of $L^{--}G$. When $G=SL_{n+1}$,  $\mathbb{C}[\overline{W}^{\lambda}_0]$ has a straightforward description in terms of these generalized minor matrix coefficients. Note that such an easy description is known not to hold in other types, for instance $E_6, E_7$ and $E_8$, \cite{BHY}.
Thus from now on, we will fix $G= SL_{n+1}$.
\begin{theorem} Let $G=SL_{n+1}$.
The ideal of $\overline{W}_0^{\lambda}$ in $\mathscr{O}(L^{--}G)$ is $J_0^{\lambda}$.
\end{theorem}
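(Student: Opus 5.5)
The plan is to prove the equality of ideals in two stages: first as an equality of zero loci, then upgraded to an equality of ideals by showing that $J_0^{\lambda}$ is radical. Throughout, $G=SL_{n+1}$, and we view $L^{--}G \hookrightarrow \mathrm{Gr}_G$ via $g \mapsto g\, t^0$.

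\emph{Step 1 (set-theoretic equality).} I would use the lattice-theoretic description of affine Schubert varieties for $SL_{n+1}$. A point $gL^+G$ lies in $\overline{\mathrm{Gr}}^{\lambda}$ if and only if, for each fundamental representation $V_{\varpi_i}=\bigwedge^i\mathbb{C}^{n+1}$, the pole order of $g$ acting on $V_{\varpi_i}\otimes R[[t]]$ is at most $\langle \lambda, \varpi_i^*\rangle$. Concretely, this says that all $i\times i$ minors of $g$ have pole order at most $\langle \lambda,\varpi_i^*\rangle$. Since the entries of $g\in L^{--}G$ lie in $R[[t^{-1}]]$, in the paper's convention pole order at $t$ is recorded by the coefficients $\Delta^{(s)}_{\beta,\nu}$ with $s$ large. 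So this condition is exactly the vanishing of the $\Delta^{(s)}_{\beta,\nu}$ for $s>\langle\lambda,\varpi_i^*\rangle$. I would check this first on $R$-points for reduced $R$, using the Cartan decomposition and the fact that the minors in the fundamental representations detect the elementary divisors. This gives $V(J_0^\lambda)_{\mathrm{red}}=\overline{W}^{\lambda}_0$. Because $\overline{W}^\lambda_0$ is reduced (it is normal), we then get the inclusion $J_0^{\lambda}\subseteq I(\overline{W}^\lambda_0)$ together with $\sqrt{J_0^\lambda}=I(\overline{W}^\lambda_0)$.

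\emph{Step 2 (reduction to finite type).} Only finitely many coefficients of the entries of $g$ survive modulo $J_0^\lambda$. To see this, note that the $(1\times1)$ minors with $i=1$ and $i=n$ already bound the pole order of every entry. Hence $\mathscr{O}(L^{--}G)/J_0^\lambda$ is a quotient of a polynomial ring in finitely many variables. It is $\mathbb{T}$-graded, positively with respect to $\Gmrot$, and the radicality question becomes a statement about finitely generated graded algebras.

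\emph{Step 3 (radicality, the main obstacle).} Proving $J_0^\lambda=\sqrt{J_0^\lambda}$ is where the real content lies. My first approach would be to transport the problem to matrices via the Mirkovi\'c--Vybornov isomorphism. This identifies $\overline{W}^\lambda_0$ with the intersection of a nilpotent orbit closure $\overline{\mathbb{O}}\subset\mathfrak{gl}_N$ (with $N=\langle\lambda,\varpi_1^*\rangle(n+1)$, or a suitable multiple) and a Mirkovi\'c--Vybornov transversal slice. Under this identification the generalized minors $\Delta^{(s)}_{\beta,\nu}$ should correspond to minors of powers of the matrix, i.e.\ rank conditions of the form $\operatorname{rk}(A^k)\le r_k$. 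One then needs:
\begin{itemize}
\item Weyman's theorem that the minors of powers generate the radical ideal of $\overline{\mathbb{O}}$ in type A;
\item a transversality and Cohen--Macaulay argument (both $\overline{\mathbb{O}}$ and the slice intersection are Cohen--Macaulay of the expected dimension) showing that restricting to the slice preserves reducedness, since a generically reduced Cohen--Macaulay scheme is reduced.
\end{itemize}
If this matching of generators proves messy, my fallback is a Hilbert series comparison. One computes the graded character of $\mathscr{O}(L^{--}G)/J_0^\lambda$ from a Gr\"obner degeneration (for instance to a Stanley--Reisner-type ideal in the minor coordinates) and compares it with $\mathrm{ch}\,\mathbb{C}[\overline{W}^\lambda_0]$, which can be extracted from the Demazure character formula of Section~\ref{K-Theory}. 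Equality of Hilbert series together with $J_0^\lambda\subseteq I(\overline{W}^\lambda_0)$ forces equality of the two ideals.
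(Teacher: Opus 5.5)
\textbf{Verdict: genuine gap.} The paper does not prove this statement; it quotes the main theorem of \cite{KMWY}. That proof has the architecture of your Step 3: the Mirkovi\'c--Vybornov isomorphism combined with Weyman's equations for nilpotent orbit closures. Your Steps 1--2 are fine. The key input, however, is misstated, and as written Step 3 fails. Weyman's theorem does not say that minors of powers of $A$ generate the ideal of $\overline{\mathbb{O}}$. Nor do the generalized minors turn into rank conditions $\mathrm{rk}(A^k)\le r_k$.

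The smallest case already shows this. Take $G=SL_2$ and $\lambda=\alpha^\vee$. Then $c=1$, $N=2$, $\mu=(1,1)$, the Mirkovi\'c--Vybornov slice $S_{(1,1)}$ is all of $\mathfrak{gl}_2$, and $\overline{\mathbb{O}}_{(2)}$ is the nilpotent cone. The rank-condition ideal is $(\det A,\,(A^2)_{ij})$. It is generated in degree $2$, so it does not contain $\mathrm{tr}A$, yet $(\mathrm{tr}A)^2=\mathrm{tr}(A^2)+2\det A$ lies in it. So it is not radical. It is also not what the loop side produces: modulo $J_0^{\alpha^\vee}$ one has $g=1-At^{-1}$, and the relation $\det g=1$ gives exactly $\mathrm{tr}A=\det A=0$.

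\textbf{The fix.} The form of Weyman's theorem you need is the following. Let $\nu=\lambda+c(1,\dots,1)$ be the $GL_{n+1}$-lift of $\lambda$, a partition of $N$, and let $x_\nu$ be a nilpotent of Jordan type $\nu$. Let $t^{d_q(\nu)}$ be the $q$-th determinantal divisor of $t-x_\nu$. Weyman's theorem says the ideal of $\overline{\mathbb{O}}_\nu\subset\mathfrak{gl}_N$ is generated by the coefficients of $t^j$, $j<d_q(\nu)$, in the $q\times q$ minors of the characteristic matrix $t\cdot 1_N-A$.

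These are exactly what the generalized minors become, for the following reasons.
\begin{enumerate}
\item By construction of the Mirkovi\'c--Vybornov map, over the coordinate ring $R$ the polynomial matrices $t^c g$ and $t-A$ present the same $R[t]$-module. Hence their Fitting ideals agree: $I_p(t^cg)=I_{N-n-1+p}(t-A)$.
\item For an ideal of $R[t]$, the ideal of $R$ generated by the coefficients of $t^j$, $j<d$, of a generating set does not depend on the choice of generators.
\item One has $d_{N-n-1+p}(\nu)=pc-\langle\lambda,\varpi_p^*\rangle$, which is precisely the cutoff defining $J_0^\lambda$.
\item After imposing the $1\times 1$ bounds, the map identifies the ambient with $S_\mu$, and $\det g=1$ becomes $\det(t-A)=t^N$.
\end{enumerate}
This scheme-theoretic matching is the real content, and your proposal only asserts it (``should correspond''). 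Once it is in place, your reducedness argument for $\overline{\mathbb{O}}_\nu\cap S_\mu$ finishes the proof. Alternatively, use smoothness of the action map $GL_N\times S_\mu\to\mathfrak{gl}_N$.

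\textbf{The fallback.} The Hilbert-series route is not an argument as it stands. No Gr\"obner basis for $J_0^\lambda$ is available, and computing the Hilbert series of $\mathscr{O}(L^{--}G)/J_0^\lambda$ is essentially the whole problem.
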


\begin{proof}
This was conjectured in \cite{KWWY} and is the main result of \cite{KMWY}.
\end{proof}
We will make a minor change of notation (which also appears in \cite{KMWY}). Since the rest of the paper will only concern itself with affine Schubert varieties for $SL_{n+1}$ we switch from fundamental weight notation to more direct generalized minor notation. 

\begin{definition}
Fix $n$ a positive integer. Let $I$,$J$ be two subsets of $k$ distinct elements $I,J \subset \{1,2, \dots n\}$. We write \[\Delta_{I,J}^{(s)}: L^{--}G \rightarrow \mathbb{A}^1\] which sends an $R$-point $M \in L^{--}G(R)$ to the coefficient of $t^{-s}$ in $det_{I,J}(M) \in R[[t^{-1}]]$. 

\end{definition}

\begin{proposition}
The ideal $J_0^{\lambda}$ in $\mathscr{O}(L^{--}G)$ is generated by $\Delta_{I,J}^{(s)}$, where $1 \leq |I|=|J| \leq n$ and $s > \langle \lambda, \omega_{n+1-|I|} \rangle$.
\end{proposition}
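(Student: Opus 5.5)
The plan is to identify the matrix coefficients of fundamental representations of $SL_{n+1}$ with minors. Recall that $V_{\varpi_k}=\bigwedge^k \mathbb{C}^{n+1}$. It has the standard weight basis $e_J=e_{j_1}\wedge\dots\wedge e_{j_k}$ indexed by $k$-subsets $J\subset\{1,\dots,n+1\}$. The dual basis $e_I^*$ of $V_{\varpi_k}^*$ is also a weight basis. For $g\in G(R((t^{-1})))$ we have $g\,e_J=\sum_I \det_{I,J}(g)\, e_I$, so
\[\Delta_{e_I^*,e_J}(g)=\langle e_I^*, g e_J\rangle=\det\nolimits_{I,J}(g).\]
Taking the coefficient of $t^{-s}$ on both sides gives $\Delta^{(s)}_{e_I^*,e_J}=\Delta^{(s)}_{I,J}$ as functions on $L^{--}G$.

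First I would check that the choice of weight bases does not matter. The weight spaces of $\bigwedge^k\mathbb{C}^{n+1}$ are one-dimensional: the weights $\epsilon_{j_1}+\dots+\epsilon_{j_k}$ are distinct for distinct subsets. So any weight basis differs from the standard one by nonzero scalars, and it generates the same ideal. The same holds for $V^*_{\varpi_k}$. Hence $J_0^\lambda$ is generated by the $\Delta^{(s)}_{I,J}$ with $|I|=|J|=k$, for $1\le k\le n$, subject to the same constraint on $s$.

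Second, I would match the index constraint. In the definition of $J_0^\lambda$ the bound is $s>\langle\lambda,\varpi_i^*\rangle$, and $V_{\varpi_i}^*=V_{\varpi_i^*}$. For $SL_{n+1}$ the dual of $\bigwedge^k$ is $\bigwedge^{n+1-k}$, so $\varpi_k^*=\varpi_{n+1-k}$. Writing $k=|I|$, the condition becomes $s>\langle\lambda,\omega_{n+1-|I|}\rangle$, which is exactly the statement.

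The main obstacle is bookkeeping of conventions rather than mathematics. One must confirm which representation the index $i$ labels: minors of size $|I|$ come from $V_{\varpi_{|I|}}$, and the dual weight then appears in the bound. One must also confirm that the $t^{-s}$ expansion convention matches. I would check both against a small case: take $n=1$ and $\lambda=m\alpha^\vee$, where $\overline{W}^\lambda_0$ is cut out by the entries vanishing beyond degree $m$. I would also note the index-set typo: subsets should lie in $\{1,\dots,n+1\}$.
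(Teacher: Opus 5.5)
Your proposal is correct and follows the paper's argument: the paper just says the proposition is the preceding theorem restated in different notation, and you spell out that translation. You identify matrix coefficients of $V_{\varpi_k}=\bigwedge^k\mathbb{C}^{n+1}$ with $k\times k$ minors, check that the ideal is independent of the choice of weight bases (this already follows from bilinearity of $(v_1,v_2)\mapsto\Delta^{(s)}_{v_1,v_2}$), and use $\varpi_k^*=\varpi_{n+1-k}$ to convert the bound; your remark that the index sets should lie in $\{1,\dots,n+1\}$ rather than $\{1,\dots,n\}$ is also correct.
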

\begin{proof}
This is just the theorem above restated in different notation.
\end{proof}

\begin{example}
Let us consider $\lambda=\alpha_1^{\vee}+2\alpha_2^{\vee}+2\alpha_3^{\vee}$. Then $\overline{W}_0^{\lambda}(R)$ is the set of matrices

\[M=\begin{pmatrix} 1+ a_1t^{-1}+a_2t^{-2} & b_1t^{-1}+b_2t^{-2} & c_1t^{-1}+c_2t^{-2}& d_1t^{-1} +d_2t^{-2} \\ h_1t^{-1}+h_2t^{-2} & 1+ a'_1t^{-1}+a'_2t^{-2} & e_1t^{-1} +e_2t^{-2} & f_1t^{-1}+f_2t^{-2} \\ i_1t^{-1}+i_2t^{-2} & j_1t^{-1}+j_2t^{-2} & 1+a^{''}_1t^{-1}+a^{''}_2t^{-2} & g_1t^{-1}+g_2t^{-2} \\ k_1t^{-1}+k_2t^{-2} & l_1t^{-1}+l_2t^{-2} & m_1t^{-1}+m_2t^{-2} & 1+a^{'''}_1t^{-1}+a_2^{'''}t^{-2} \\ \end{pmatrix}\] defined by the conditions that $a_i, b_i$ etc. belong to $R$, the valuation (with respect to $t$) of all 2x2 minors be greater than or equal to -2, the valuation of all 3x3 minors be greater than or equal to -1, and the determinant be equal to 1. The coefficient $2$ of $\alpha_3^{\vee}$ is used to bound the valuations of the 1x1 minors.
\end{example}

In this case, it is easy to describe $(T_0 \overline{W}_0^{\lambda})^{\neq 0}$ and $(T_0^* \overline{W}_0^{\lambda})^{\neq 0}$.
\begin{proposition}\label{prop:min ai}
Let $G=SL_{n+1}$, and let $\lambda = \sum a_i \alpha_i^{\vee}$ be a dominant coweight. The vector space $(T_{0}^* \overline{W}_0^{\lambda} )^{\neq 0}= (\mathfrak{m}_A/\mathfrak{m}_A^2)^{\neq 0}$ has a basis given by the images (all of which are nonzero) of the matrix coefficients  \[\{\Delta_{i \neq j}^{(l)} \}_{1 \leq l \leq \mathrm{min\{}a_i\}}\] for $1 \leq l \leq \mathrm{min}(a_i)$. Note moreover that this minimum is either $a_1$ or $a_n$.

By Nakayama's lemma in the graded case, the ideal $I^T(\overline{W}_0^{\lambda})$ is minimally generated by these matrix coefficients: 

\[I^T(\overline{W}_0^{\lambda}) = ( \{\Delta_{i \neq j}^{(l)}\}_{1 \leq l \leq \mathrm{min} \{a_i\}} )\subset A=\mathbb{C}[\overline{W}_0^{\lambda}].\]
\end{proposition}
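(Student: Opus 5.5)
The strategy is to describe $\mathfrak{m}_A/\mathfrak{m}_A^2$ for $A=\mathbb{C}[\overline{W}^\lambda_0]$ directly from the presentation $A=\mathscr{O}(L^{--}G)/J^\lambda_0$, using the generalized-minor generators of $J_0^\lambda$. Then we pick off its nontrivial $T$-weight part. The generating statement will follow from Proposition \ref{prop:basis}.

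\textbf{Step 1: the ambient cotangent space.} The ring $\mathscr{O}(L^{--}G)$ is generated by the matrix coefficients $\Delta_{i,j}^{(l)}$, $l\ge 1$. The cotangent space of $L^{--}G$ at the identity is the dual of $t^{-1}\mathfrak{sl}_{n+1}[[t^{-1}]]$. In it, the classes of $\Delta_{i,j}^{(l)}$ with $i\neq j$ form a basis of the nonzero-$T$-weight part; the weight of $\Delta_{i,j}^{(l)}$ is $\epsilon_i-\epsilon_j$ up to sign, together with $l\varpi_r$. The diagonal coefficients span only the $T$-weight-zero part.

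\textbf{Step 2: the cotangent space of the slice.} We have $\mathfrak{m}_A/\mathfrak{m}_A^2=\mathfrak{m}/(\mathfrak{m}^2+J_0^\lambda)$. So we compute the linear parts, modulo $\mathfrak{m}^2$, of the generators $\Delta^{(s)}_{I,J}$ of $J_0^\lambda$ with $s>\langle\lambda,\omega_{n+1-|I|}\rangle$.

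Write $M=1+N$ with $N\in t^{-1}\mathfrak{gl}[[t^{-1}]]$. Expanding the $I\times J$ minor, its linear part in $N$ is nonzero only when $|I\triangle J|\le 2$.
\begin{itemize}[label={}]
\item If $I=J\cup\{i\}\setminus\{j\}$ with $i\neq j$, the linear part is $\pm\Delta^{(s)}_{i,j}$.
\item If $I=J$, the linear part is a sum of diagonal coefficients, which has $T$-weight zero.
\end{itemize}
Hence the off-diagonal linear relations are exactly $\Delta^{(s)}_{i,j}\equiv 0$ for $s>\langle\lambda,\omega_k\rangle$, where $k$ ranges over those values of $n+1-|I|$ for which some $I,J$ of that size differ by swapping $j$ for $i$. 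For $SL_{n+1}$ every size $1\le|I|\le n$ can be realized for any pair $i\neq j$: choose $J\ni j$, $J\not\ni i$, and set $I=J\setminus\{j\}\cup\{i\}$.

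So $\Delta_{i,j}^{(l)}$ survives in $(\mathfrak{m}_A/\mathfrak{m}_A^2)^{\neq0}$ if and only if $l\le\min_k\langle\lambda,\omega_k\rangle$. The weight-zero relations do not interfere, because the quotient respects $T$-weights.

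\textbf{Step 3: identifying the minimum.} With $\lambda=\sum a_i\alpha_i^\vee$ we have $\langle\lambda,\omega_k\rangle=a_k$, so the bound is $\min_k a_k$. To see that the minimum is attained at $a_1$ or $a_n$, use dominance: $2a_k\ge a_{k-1}+a_{k+1}$ (with $a_0=a_{n+1}=0$) says the sequence $(a_k)$ is concave. A concave function on an interval attains its minimum at an endpoint.

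\textbf{Step 4: conclusion.} Linear independence of the surviving classes follows because distinct pairs $(i,j,l)$ have distinct $\mathbb{T}$-weights, except that different $l$ share a $T$-weight but differ in $\Gmrot$-weight. Since the classes are nonzero and pairwise of distinct $\mathbb{T}$-weight, they are independent. Proposition \ref{prop:basis} then gives the minimal generating set of $I^T(\overline{W}^\lambda_0)$.

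\textbf{Main obstacle.} The delicate point is Step 2. One must verify that no relation in $J_0^\lambda$ has a linear part mixing off-diagonal coefficients in a way not captured by the single-swap minors. One must also check the indexing convention $\omega_{n+1-|I|}$ against $a_k$, including the dual-representation flip $\varpi_i^*$. I would handle this by a careful first-order expansion of $\det_{I,J}(1+N)$: only the terms with at most one off-diagonal factor contribute linearly.
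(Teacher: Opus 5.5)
Your proof is correct, but it takes a different route from the paper's.

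\textbf{The paper's route.} It argues geometrically. Whenever $t^{k\alpha}\in\overline{\mathrm{Gr}}^\lambda$, it produces root curves $U_{\alpha-k\varpi_r}\colon\mathbb{A}^1\to\overline{W}^\lambda_0$ whose composites with $\Delta^{(l)}_{i,j}$ are either the identity or zero. This shows the classes $\overline{\Delta}^{(k)}_{i,j}$ are nonzero and independent. Spanning is not proved but imported from the cited equality $(T_0\overline{W}^\lambda_0)^{\mathrm{root}}=(T_0\overline{W}^\lambda_0)^{\neq 0}$ of KMWY and KPZ. The range of $k$ is read off from $k\theta^\vee\le\lambda\iff k\le\min_i a_i$ together with Weyl symmetry.

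\textbf{Your route.} You compute $\mathfrak{m}/(\mathfrak{m}^2+J_0^\lambda)$ directly from the KMWY generators. One first-order expansion of $\det_{I,J}(1+N)$ then gives both nonvanishing and spanning, with no appeal to KPZ or to root curves. In effect you prove the needed tangent-space equality at $\mu=0$ rather than citing it.

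\textbf{Your two concerns.} The ``main obstacle'' you flag is not a real difficulty. For $g\in\mathfrak{m}$ and any $a$ one has $ag\equiv a(0)g\pmod{\mathfrak{m}^2}$. Hence the image of $J_0^\lambda$ in $\mathfrak{m}/\mathfrak{m}^2$ is exactly the span of the linear parts of any generating set, and your single-swap and $I=J$ analysis is complete. The $\varpi_i^*$ flip is also harmless: $k\times k$ minors are bounded by $a_{n+1-k}$, and, as you observe, every $k\in\{1,\dots,n\}$ occurs for every pair $i\neq j$, so the minimum is $\min_i a_i$ regardless of the indexing. You are also right that dominance makes $(a_k)$ concave; the paper's Lemma~\ref{dominantcharacterization} says ``convex'', but the conclusion about the minimum is the same.

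\textbf{What each approach buys.} The paper's formulation, in terms of root tangent vectors and coroot strings, is exactly what is reused at the other fixed points $t^\mu$ in the proof of Theorem~\ref{lcicriterion}. Your computation is tied to the explicit minor generators of $J_0^\lambda$ at $\mu=0$, but it is self-contained given KMWY.
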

\begin{proof}
For a coroot $\alpha \in \Phi^{\vee}$, assume $t^{k\alpha} \in \overline{\mathrm{Gr}}^{\lambda}$. Then $t^{k \alpha}$ is in the closure of the locally closed subscheme \footnote{Recall that we write $\varpi_r$ for the generator of $X^*(\Gmrot)$; this is usually written $\delta$ in the Kac-Moody literature.}
\[\mathbb{A}^1 \rightarrow \overline{W}_0^{\lambda}\]
\[x \mapsto U_{\alpha-k\varpi_r}(x). \]
Such curves in $\overline{W}_0^{\lambda}$ lead to a supply of tangent vectors $X_{\alpha-k\varpi_r}$ called root tangent vectors. We have the subspace spanned by root tangent vectors 
\[(T_0\overline{W}_0^{\lambda})^{\text{root}} \subset (T_0 \overline{W}_0^{\lambda})^{ \neq 0}.\] Dually, taking the images $\overline{\Delta}_{i, j}^{(k)} \in \mathfrak{m}_A/\mathfrak{m}_A^2$ of matrix coefficients $\Delta_{i, j}^{(k)}$ yields a supply of cotangent vectors dual to the root tangent vectors. More precisely we have the following diagrams: 

\begin{center}
\begin{tikzcd}
\mathbb{A}^1 \arrow["{U_{\alpha-k \varpi_r}}",r] & \overline{W}_0^{\lambda} \arrow["{\Delta_{i,j}^{(k)}}",d] \\
 & \mathbb{A}^1 \\
\end{tikzcd}
\end{center}

such that the composition 
\[\Delta_{i,j}^{(k)} \circ U_{\alpha-k\varpi_r}: \mathbb{A}^1 \rightarrow \mathbb{A}^1\] 
is either the identity (if $\alpha=\epsilon_i-\epsilon_j, k=l$) or $0$ otherwise.
For general types the containment $(T_0\overline{W}_0^{\lambda})^{\text{root}} \subset (T_0 \overline{W}_0^{\lambda})^{\neq 0}$ may be strict but for $G=SL_{n+1}$, by work of \cite{KMWY} and \cite{KPZ}, we have an equality 
\[(T_0 \overline{W}^{\lambda}_0)^{\text{root}} = (T_0 \overline{W}_0^{\lambda})^{\neq 0}.\] Thus for $G=SL_{n+1}$, $A=\mathbb{C}[\overline{W}_0^{\lambda}]$ we observe that
\[(T^*_0 \overline{W}_0^{\lambda})^{\neq 0} =(\mathfrak{m}_A/\mathfrak{m}_A^2)^{\neq 0}\] 
has a homogeneous basis given by $\{\overline{\Delta}_{a,b}^{(k)}\}_{a \neq b}$ where $k$ ranges between $1$ and the maximal number such that $t^{k \alpha} \in \overline{\mathrm{Gr}}^{\lambda}$. Note that by the $G$-symmetry on $\overline{\mathrm{Gr}}^{\lambda}$,  $t^{k \alpha} \in \overline{\mathrm{Gr}}^{\lambda} \iff t^{k \beta} \in \overline{\mathrm{Gr}}^{\lambda}$ for any $\alpha, \beta \in \Phi^{\vee}$.

With this in mind,  it is enough to study when $t^{k \theta^{\vee}} \in \overline{\mathrm{Gr}}^{\lambda}$. Recall that in type $A$, 
\[\theta^{\vee}= \sum_i \alpha_i^{\vee}.\] 
In particular, we have
\[k \theta^{\vee} \leq \lambda \iff k \leq \mathrm{min}_i \{a_i\}.\]
 Moreover, by Lemma \ref{dominantcharacterization} below, this minimum is always attained by either $a_1$ or $a_n$.
We conclude that for $G=SL_{n+1}$, the images $\overline{\Delta}_{i \neq j}^{(k)}$ for $k \leq \mathrm{min}_i\{a_i\}= \mathrm{min}\{a_1, a_n\}$ span $(\mathfrak{m}_A/\mathfrak{m}_A^2)^{ \neq 0}$.

\end{proof}

\begin{lemma}\label{dominantcharacterization} Let $G= SL_{n+1}$. 
Let $\lambda=\sum a_i \alpha_i^{\vee}$ be a dominant coweight. Then the $a_i$ form a convex sequence. In particular, the minimal coefficient is either $a_1$ or $a_n$. 
\end{lemma}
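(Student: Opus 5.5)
The plan is to convert dominance of $\lambda$ into a second-difference inequality on the coefficients $a_i$, using the Cartan matrix of type $A_n$. Since $G=SL_{n+1}$, every coweight in $X_*(T)$ lies in the coroot lattice, so we may write $\lambda=\sum_{i=1}^n a_i\alpha_i^{\vee}$ with $a_i\in\mathbb{Z}$. Pairing with the simple roots gives
\[
\langle \lambda, \alpha_j\rangle=\sum_i a_i\langle \alpha_i^{\vee},\alpha_j\rangle = 2a_j - a_{j-1}-a_{j+1},
\]
with the conventions $a_0=a_{n+1}=0$. Dominance of $\lambda$ means $\langle\lambda,\alpha_j\rangle\ge 0$ for all $1\le j\le n$, which is exactly
\[
a_j\ \ge\ \tfrac12\,(a_{j-1}+a_{j+1}), \qquad 1\le j\le n.
\]
So the extended sequence $0=a_0,a_1,\dots,a_n,a_{n+1}=0$ has nonpositive second differences. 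This makes it concave as a function of the index, that is, "convex" in the sense meant in the statement (lying above its chords).

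To conclude that the minimum is attained at an endpoint, I will argue via the first differences. Set $d_j=a_{j+1}-a_j$ for $0\le j\le n$. The inequality above says $d_j\le d_{j-1}$, so the sequence $(d_j)$ is nonincreasing. Hence $(a_j)_{0\le j\le n+1}$ first weakly increases and then weakly decreases. It follows that for $1\le j\le n$,
\[
a_j\ \ge\ \min(a_1,a_n).
\]
To see this, take the last index $k$ with $d_{k-1}\ge 0$. On $[1,k]$ the sequence is nondecreasing, so it is at least $a_1$ there; on $[k,n]$ it is nonincreasing, so it is at least $a_n$ there. Therefore $\min_i a_i=\min(a_1,a_n)$.

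None of these steps is a real obstacle; the argument is essentially bookkeeping. The one point needing care is the boundary convention $a_0=a_{n+1}=0$, together with the fact that dominance is imposed only at $j=1,\dots,n$. That is exactly the range needed for the monotonicity of $d_0,\dots,d_n$. As a byproduct we also get $a_i\ge 0$, since the concave extended sequence is bounded below by its endpoint values $0$, although the lemma does not require this.
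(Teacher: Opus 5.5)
Your proposal is correct and is the argument the paper intends: the paper leaves this lemma to the reader, pointing to Section \ref{globaltheorems}, where dominance is rewritten as $a_{k+1}-a_k\le a_k-a_{k-1}$ with $a_0=a_{n+1}=0$, exactly your monotonicity of the first differences $d_j$. You are also right that the sequence is concave in the usual sense, so the statement's ``convex'' has to be read that way. It is this concavity that forces the minimum over $1\le i\le n$ to occur at $a_1$ or $a_n$.
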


\begin{proof}
Left to the reader (who can glance at section \ref{globaltheorems} for a hint).
\end{proof}

\section{Examples}\label{examples}

We provide several examples in low rank, computed with the help of Macaulay2 \cite{M2}.
\subsection{$G=SL_2$}

\begin{example}
For $\lambda= n \alpha_1^{\vee}$, $n \in \mathbb{Z}^{\geq 0}$, $\mathbb{V}^{\bullet}(I^T(\overline{W}_0^{\lambda}))$ is classical; that is, the derived scheme $\mathbb{V}^{\bullet}(I^T(\overline{W}_0^{\lambda}))$ is isomorphic to $(\overline{W}^{\lambda}_0)^T.$ %$(\overline{\mathrm{Gr}}_{SL_2}^{\lambda})_0^{T, \bullet}$ is 0-coconnective.
\end{example}

We have explicit coordinates for $\overline{W}_0^{\lambda}$; an $R$-point is given by \[M=\begin{pmatrix} 1+a_1t^{-1}+a_2 t^{-2} \dots +a_n t^{-n} & b_1t^{-1} + \dots + b_nt^{-n} \\ c_1t^{-1} + \dots + c_nt^{-n} & 1+d_1t^{-1} + \dots d_nt^{-n} \\ \end{pmatrix}\]
where $a_i, b_i, c_i, d_i\dots \in R$ and such that $\mathrm{det}(M)=1$. It is not difficult to see that $M \in (\overline{W}_0^{\lambda})^T$ if and only if the $b_i$ and $c_i$ all vanish; in other words $M \in (\overline{W}_0^{\lambda})^T(R)$ if and only if $\Delta_{1,2}^{(i)}(M)=0$ and $\Delta_{2,1}^{(i)}(M)=0$ for all $1 \leq i \leq n$. These $2n$ elements minimally generate the ideal $I^T(\overline{W}_0^{\lambda})$. Since $\mathbb{C}[\overline{W}_0^{\lambda}]/I^T(\overline{W}_0^{\lambda})$ is 0-dimensional and since $\dim  \overline{W}_0^{\lambda}=2n$, the collection $\{\Delta_{1,2}^{(i)}, \Delta_{2,1}^{(i)}\}$ is a homogeneous system of parameters. Since $\overline{W}_0^{\lambda}$ is Cohen-Macaulay, these elements form a regular sequence. The derived scheme defined by $K^{\bullet}(\Delta^{(1)}_{1,2}, \Delta_{2,1}^{(1)}, \dots, \Delta^{(n)}_{1,2}, \Delta^{(n)}_{2,1})$  has only homology in degree 0, and $\mathbb{V}^{\bullet}(I^T(\overline{W}^{\lambda}_0))$ is classical.

Since the derived scheme $\mathbb{V}^{\bullet}(I^T(\overline{W}^{\lambda}_0))$ is always classical, Corollary \ref{sympoly} implies that the Poincar\'e polynomial is always a palindromic polynomial. In this case it coincides with the $\Lambda_0$-string of the level one affine Demazure module $D(1, \lambda)$. In this case, a stronger result is known; the $\Lambda_0$ strings of level 1 affine Demazure characters for $SL_2$ are q-binomial coefficients, which are always palindromic.

\subsection{$G=SL_3$}
\begin{example}\label{sl3example}
Consider $\theta^{\vee}=\alpha_1^{\vee}+\alpha_2^{\vee}$ for $SL_3$. The derived scheme $\mathbb{V}^{\bullet}(I^T(\overline{W}^{\theta^{\vee}}_0))$ is not classical.
\end{example}

For $R$ a $\mathbb{C}$-algebra, an $R$-point of $\overline{W}_{0}^{\theta^{\vee}}$ is given by \[\begin{pmatrix}1 + a_1t^{-1} & b_1 t^{-1} & c_1 t^{-1} \\ d_1 t^{-1} & 1+ e_1 t^{-1} & f_1 t^{-1} \\ g_1 t^{-1} & h_1 t^{-1} & 1+k_1t^{-1} \\ \end{pmatrix}\] such that $\mathrm{det} M = 1$ and the valuation of all $2x2$ minors is greater than or equal to $-1$. It is easy to see that $M \in (\overline{W}_{0}^{\theta^{\vee}})^T$ if and only if $b_1= c_1= f_1= d_1=g_1=f_1=0$, or in other words $\Delta_{1,2}^{(1)}(M)=\Delta_{1,3}^{(1)}(M)= \dots =0$. Less obvious is that $\Delta_{1,2}^{(1)}, \Delta_{1,3}^{(1)}$, $\Delta_{2,3}^{(1)}$, $\Delta_{2,1}^{(1)}$, $\Delta_{3,1}^{(1)}$ and $\Delta_{3,2}^{(1)}$ minimally generate $I^T(\overline{W}_{0}^{\theta^{\vee}})$, but this is true by Proposition \ref{prop:min ai} and Proposition \ref{prop:generator}. Since $\overline{W}_{0}^{\theta^{\vee}}$ is $4$-dimensional, the $T$-fixed subscheme is not a complete intersection. The derived intersection $\mathbb{V}^{\bullet}(I^T(\overline{W}^{\theta^{\vee}}_0))$ associated to the CDGA $K_A^{\bullet}(\Delta_{1,2}^{(1)}, \Delta_{1,3}^{(1)}, \Delta_{2,3}^{(1)}, \Delta_{2,1}^{(1)}, \Delta_{3,1}^{(1)},\Delta_{3,2}^{(1)})$ has cohomology concentrated in degrees $[-2,0]$. We give the Poincar\'e polynomial in graded form. The horizontal grading is the homological grading and the vertical grading is the $\Gmrot$-weight.

\begin{center}
\begin{tabular}{ c c c | c}
1 &  &  & (4) \\
2 & & & (3) \\
& 6 & & (2) \\
 &  & 2 & (1) \\
 & & 1 & (0) \\
\hline
$H^{-2}$ & $H^{-1}$ & $H^{0}$ & \\
\end{tabular}
\end{center}

\begin{remark}\label{sl3remark}
Note that this derived intersection is not a $T$-fixed derived scheme. The $6$ generators in degree $(-1, 2)$ have nontrivial $T$-weight, given by weights of off-diagonal $2x2$ generalized minors.
\end{remark}

\begin{example} The derived intersection $\mathbb{V}^{\bullet}(I^T(\overline{W}_0^{\alpha_1^{\vee}+2\alpha_2^{\vee}}))$ is classical (in other words, isomorphic to $(\overline{W}_0^{\alpha_1^{\vee}+2\alpha_2^{\vee}})^T)$.
\end{example}

For a $\mathbb{C}$-algebra $R$, an $R$ point of $M \in \overline{W}_{0}^{\alpha_1^{\vee} +2 \alpha_2^{\vee}}(R)$ is given by \[\begin{pmatrix} 1+ a_1t^{-1}+a_2t^{-2} & b_1t^{-1}+b_2t^{-2} & c_1t^{-1}+c_2t^{-2} \\ d_1t^{-1}+d_2t^{-2} & 1+ e_1t^{-1}+e_2t^{-2} & f_1t^{-1}+f_2t^{-2} \\ g_1t^{-1}+g_2t^{-2} & h_1t^{-1}+h_2t^{-2} & 1+ k_1t^{-1}+k_2t^{-2} \\\end{pmatrix}\] such that all $2 \times 2$ minors have valuation of determinant $\geq -1$, and such that the determinant of $M$ is 1. The $R$-point $M$ factors through the $T$-fixed locus $M \in (\overline{W}_{0}^{\alpha_1^{\vee}+2 \alpha_2^{\vee}})^T_{0}$ if and only if all off-diagonal terms vanish, e.g. if and only if $\Delta_{i,j}^{(l)}(M)=0$ for $i \neq j, 1 \leq l \leq 2$. In this case however, the ideal $I^T(\overline{W}_{0}^{\alpha_1^{\vee}+2 \alpha_2^{\vee}})$ is minimally generated by $\{ \Delta_{i,j}^{(1)}\}$ for $i \neq j$. For instance, the valuation condition on the upper right $2x2$ minor implies that the coefficient of $t^{-2}$ vanishes; expanding in coordinate functions we have the relation $\Delta_{1,2}^{(1)}\Delta_{2,3}^{(1)}-\Delta_{1,3}^{(2)}-\Delta_{2,2}^{(1)} \Delta_{1,3}^{(1)}=0$ in $\mathbb{C}[\overline{W}_{0}^{\alpha_1^{\vee}+2\alpha_2^{\vee}}]$, so we see that $\Delta_{1,3}^{(2)}$ is in the ideal generated by the $\Delta_{i,j}^{(1)}$, $i \neq j$. Since the ideal of the $T$-fixed subscheme is minimally generated by $6$ elements $\Delta_{i,j}^{(1)}$ and since $\dim  \overline{W}_{0}^{\alpha_1^{\vee}+2\alpha_2^{\vee}}=6$, and since the $T$-fixed subscheme 0 dimensional, the six generators of the ideal form a homogeneous system of parameters. Since $\overline{W}^{\lambda}_{\mu}$ are Cohen-Macaulay, this homogeneous system of parameters is a regular sequence, and the derived intersection is classical. The graded Poincar\'e polynomial as as follows:

\begin{center}
\begin{tabular}{  c | c}
1& (3) \\
 2& (2) \\
 2& (1) \\
 1& (0) \\
\hline
 $H^{0}$ & \\
\end{tabular}
\end{center}

These are the Betti numbers of $H^*(SL_3/B)$, as well as the string below $\Lambda_0$ of the affine Demazure module $D(1, \alpha_1^{\vee}+2 \alpha_2^{\vee})$.

\begin{example}Derived structure on $\mathbb{V}^{\bullet}(I^T(\overline{W}^{2 \theta^{\vee}}_0))$. \label{2theta}
\end{example}
The dimension of $\overline{W}_{0}^{2 \theta^{\vee}}$ is 8, and the minimal number of generators of $I^T(\overline{W}^{2 \theta^{\vee}}_0)$ is $6 \cdot 2=12$. Thus our Corollary \ref{amplitude} above states that the derived intersection $\mathbb{V}^{\bullet}(I^T(\overline{W}^{2 \theta^{\vee}}_0))$ has structure sheaf with cohomologies concentrated in degrees $[-4,0]$. The Poincar\'e polynomial is as follows:

\begin{center}
\begin{tabular}{ c c c c c | c} 
1 & & &  & &  (14) \\
2 & & &  & &  (13) \\
5 & & &  & & (12) \\
4 & 6 & & & &  (11)\\
3 & 18 & &  & & (10)\\
 & 24 & 2 & & & (9) \\
 &12 &21  & & & (8) \\
 & &44 & & & (7) \\
 &  &21 &12 &  & (6) \\
 & &2 &24 & & (5)\\
 & & &18 &3 & (4) \\
 & & &6 &4 & (3)\\
 & & & & 5& (2) \\
 & & & & 2& (1) \\
 & & & & 1& (0)\\
 \hline
 $H^{-4}$ & $H^{-3}$ & $H^{-2}$ & $H^{-1}$ & $H^0$ & \\
\end{tabular}
\end{center}

\subsection{$G=SL_4$}
\begin{example} \label{sl4}
The derived intersection $\mathbb{V}^{\bullet}(I^T(\overline{W}^{\theta^{\vee}}_0))$.
\end{example}
The ideal of the $T$-fixed subscheme in $\overline{W}_{0}^{\theta^{\vee}}$ is minimally generated by $\Delta_{i,j}^{(1)}$, $i \neq j$; there are 12 generators, corresponding exactly to $\Phi_{SL_4}$. Since $\dim  \overline{W}_{0}^{\theta^{\vee}}=6$, the CDGA $K^{\bullet}(\{\Delta_{i,j}^{(1)}\})$ for $i \neq j$ has cohomology concentrated in degrees $[-6,0].$ We record the graded Poincar\'e polynomial of $H^*(K^{\bullet}(\{\Delta_{i,j}^{(1)}\})$, again in table form. 

\begin{center}
\begin{tabular}{ c c c c c c c | c }
 1& & & & & & & (9)\\
  3& & & & & & & (8)\\
  & 30& & & & & & (7)\\
  & & 62& & & & & (6)\\
  & &4 & 40& & & & (5)\\
   & & &40 & 4& & & (4) \\
  & & & & 62& & & (3) \\
 &  &  &   & & 30 & & (2) \\
 & &  &  & & & 3  & (1) \\
 & & & & & & 1 & (0) \\
\hline
$H^{-6}$  & $H^{-5}$ & $H^{-4}$ & $H^{-3}$ & $H^{-2}$ & $H^{-1}$ & $H^{0}$  \\  
\end{tabular}
\end{center}

\begin{example}
The derived intersection $\mathbb{V}^{\bullet}(I^T(\overline{W}^{2 \alpha_1^{\vee}+2\alpha_2^{\vee}+\alpha_3^{\vee}}_0))$.
\end{example}

Once again, the ideal of the $T$-fixed subscheme is minimally generated by the 12 generators $\Delta_{i,j}^{(1)}$, for $i \neq j$. We have $\dim  \overline{W}_{0}^{2 \alpha_1^{\vee}+2\alpha_2^{\vee}+\alpha_3^{\vee}}=10$ so by Corollary \ref{amplitude} we know the cohomology of the Koszul complex $K^{\bullet}(\{ \Delta_{i,j}^{(1)}\}_{i \neq j})$  is concentrated in degrees $[-2, 0]$. The graded Poincar\'e polynomial is as follows:

\begin{center}
\begin{tabular}{ c c c | c}
1& & & (7) \\
3& & & (6) \\
5 & & & (5) \\
3 &12 & & (4) \\
& 12 &3 & (3) \\
 &  & 5 & (2) \\
 &  & 3 & (1) \\
 & & 1 & (0) \\
 \hline
 $H^{-2}$ & $H^{-1}$ & $H^{0}$ & \\
\end{tabular}
\end{center}

\section{Global theorems in type A}\label{globaltheorems}
In this section we extract global theorems for affine Schubert varieties from the preceding considerations. We continue to keep $G=SL_{n+1}$. Recall that $\varpi_i^{\vee}$ are the fundamental coweights. These coweights are not in the coroot lattice, but $(n+1)k \varpi_1^{\vee}$ and $(n+1)k \varpi_n^{\vee}$ are dominant coweights in $\mathbb{Z} \Phi^{\vee}_{SL_{n+1}}$ for all $k$.

\begin{theorem}\label{lcicriterion}
Let $\overline{\mathrm{Gr}}^{\lambda}$ be an affine Schubert variety for $G=SL_{n+1}$, with $\lambda$ a dominant coweight. Then 
\[(\overline{\mathrm{Gr}}^{\lambda})^T \rightarrow \overline{\mathrm{Gr}}^{\lambda}\] is a local complete intersection (LCI) if and only if $\lambda = (n+1)  k \varpi_{1}^{\vee}$ or $ \lambda = (n+1)k \varpi_{n}^{\vee}$ for $k \in \mathbb{N}$.
\end{theorem}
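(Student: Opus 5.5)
The plan is to reduce the global LCI question on $\overline{\mathrm{Gr}}^{\lambda}$ to the slice at $t^0$, where Proposition \ref{prop:min ai} makes everything explicit.

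\emph{Reduction to the slice.} The $T$-fixed subscheme $(\overline{\mathrm{Gr}}^{\lambda})^T$ is supported at the points $t^{\mu}$ with $\mu_+\le\lambda$. Near $t^{\mu}$ we use the $\mathbb{T}$-stable affine open $U^{\lambda}_{\mu}=w_{\mu}(I)\cdot t^{\mu}\times\overline{W}^{\lambda}_{\mu}$ from Section \ref{K-Theory}. The first factor is an affine space whose coordinates all have nonzero $T$-weight (the weights are roots). Hence $(U^{\lambda}_{\mu})^T=\{\text{pt}\}\times(\overline{W}^{\lambda}_{\mu})^T$, and the first factor contributes a regular sequence. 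So $(\overline{\mathrm{Gr}}^{\lambda})^T\to\overline{\mathrm{Gr}}^{\lambda}$ is LCI at $t^{\mu}$ if and only if $(\overline{W}^{\lambda}_{\mu})^T\to\overline{W}^{\lambda}_{\mu}$ is a complete intersection.

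\emph{Only $\mu=0$ matters.} The LCI locus of a closed embedding is open. Every $T$-fixed point $t^{\mu}$ lies in $\overline{\mathrm{Gr}}^{\lambda}$, and $\overline{W}^{\lambda}_0$ is an open dense neighbourhood of $t^0$ whose $\Gmrot$-action contracts everything to $t^0$, so the loci are related through the point $t^0$. A cleaner route is to compare with the $t^0$ case via the $G$-action together with upper semicontinuity, but I expect to need a direct argument; this is where I would be most careful. The most robust version: for the ``only if'' direction it suffices to fail at $t^0$. For the ``if'' direction I would argue as follows. If $\mathrm{Spec}\,A\supset V(I)$ is a complete intersection at the cone point of a conical graded ring, then by grading (graded Nakayama plus the equivalence of local and graded regular sequences) it is a complete intersection on the whole affine chart $\overline{W}^{\lambda}_0$. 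I would then show the remaining points $t^{\mu}$, $\mu\neq0$, are handled by the same dimension count applied to $\overline{W}^{\lambda}_{\mu}$, using the $G$-translate $w_\mu$ to reduce to dominant $\mu$.

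\emph{The criterion at $t^0$.} By the slice corollary of Section \ref{Sec6}, which uses Corollary \ref{amplitude} and the Cohen--Macaulay property, complete intersection holds exactly when
\[\dim(T_0\overline{W}^{\lambda}_0)^{\neq 0}=\dim\overline{W}^{\lambda}_0 .\]
By Proposition \ref{prop:min ai}, the left side is $n(n+1)\min\{a_1,a_n\}$. The right side is $2\langle\lambda,\rho\rangle=2\sum_i a_i$, using the identity $\langle\alpha_i^\vee,\rho\rangle=1$ in coroot coordinates. So we need
\[n(n+1)\min(a_1,a_n)=2\sum_{i=1}^n a_i .\]
By Lemma \ref{dominantcharacterization}, $(a_i)$ is concave. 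In fact dominance means $2a_i\ge a_{i-1}+a_{i+1}$ with $a_0=a_{n+1}=0$; the paper says ``convex'' but the minimum is attained at an end.

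\emph{Combinatorics.} Concavity with $a_0=a_{n+1}=0$ gives $a_i\ge$ the linear interpolation. Take $a_1\le a_n$ without loss of generality. Comparing with the line through $(0,0),(1,a_1)$ and $(n+1,0),(n,a_n)$ yields
\[a_i\ \ge\ \max\bigl(\text{chord bounds}\bigr)\ \ge\ \frac{i(n+1-i)}{n}\,a_1 ,\]
whose sum is $\tfrac{n(n+1)}{2}a_1$. One checks this from $a_i\ge$ the piecewise-linear lower envelope; more simply, $a_i\ge i\,a_1\cdot$ concavity along the sequence. Equality forces all $a_i$ to equal the bound, and the second differences to vanish except where forced. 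This means $a_i=i\,a_1$ for $i\le n$ with $a_{n+1}=0$, giving $\lambda$ with $\langle\lambda,\alpha_j\rangle=0$ for $j<n$. That is, $\lambda$ is a multiple of $\varpi_n^\vee$ lying in the coroot lattice, namely $(n+1)k\varpi_n^\vee$; symmetrically we get $(n+1)k\varpi_1^\vee$. The main obstacle is making this inequality sharp with the correct equality case, together with the passage from $t^0$ to the other fixed points in the ``if'' direction.
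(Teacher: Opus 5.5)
The main gap is the ``if'' direction at the fixed points other than $t^0$. Your reduction to slices is fine, and for $a_k=ka_1$ Proposition~\ref{prop:min ai} does give the complete intersection property at $t^0$. But $(\overline{\mathrm{Gr}}^{\lambda})^T$ is a disjoint union of fat points, one at every $t^{\mu}$ with $\mu_+\le\lambda$. None of these except $t^0$ lies in $\overline{W}^{\lambda}_0$. So openness of the LCI locus and your graded argument on the conical chart $\overline{W}^{\lambda}_0$ say nothing about them, and Weyl symmetry only reduces you to dominant $\mu\neq 0$. ``The same dimension count'' is not available at such $\mu$, because Proposition~\ref{prop:min ai} computes the cotangent space only at $t^0$.

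What you need is $\dim(T_{t^\mu}\overline{\mathrm{Gr}}^{\lambda})^{\neq 0}$ for every $\mu$, and this is what the paper's Step~2 supplies. It uses that in type A this space is spanned by root-curve tangent vectors \cite{KMWY,KPZ}. Hence its dimension is $\sum_{\beta^\vee\in\Phi^\vee}RL^{\lambda}(\mu,\beta^\vee)$, where $RL^{\lambda}(\mu,\beta^\vee)$ is the largest $m$ such that $\mu+m\beta^\vee$ is a weight of $V_\lambda$. It then uses the degenerate shape of the weight polytope for $\lambda=(n+1)k\varpi_1^\vee$. Its weights are the integral $\mu$ with $\sum_i\mu_i=0$ and all $\mu_i\ge -k$, so $RL^{\lambda}(\mu,\epsilon_j-\epsilon_i)=\mu_i+k$. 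The total is therefore $n\sum_i(\mu_i+k)=n(n+1)k=\langle 2\rho,\lambda\rangle$, independently of $\mu$. Without an argument of this kind at each $\mu\neq 0$, your ``if'' direction is not proved.

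The combinatorial step in the ``only if'' direction is also wrong as written. The inequality $n(n+1)\min(a_1,a_n)\ge 2\sum_i a_i$ always holds by Krull's height theorem. So characterising equality requires a sharp \emph{upper} bound on $\sum_i a_i$; lower bounds cannot do it. Moreover, your bound $a_i\ge\frac{i(n+1-i)}{n}a_1$ is false. For $\lambda=\theta^\vee$ in $SL_4$ one has $(a_1,a_2,a_3)=(1,1,1)$, while $\frac{2\cdot 2}{3}a_1=\frac{4}{3}>a_2$.

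The correct step, as in the paper, runs as follows. Dominance (concavity with $a_0=0$) gives $a_k-a_{k-1}\le a_1-a_0=a_1$. Hence $a_k\le k a_1$ and $2\sum_k a_k\le n(n+1)a_1$, with equality if and only if $a_k=ka_1$ for all $k$. Your identification of this equality case as $\lambda=(n+1)a_1\varpi_n^\vee$ is correct. The paper writes $\varpi_1^\vee$ at this point, which is harmless by the diagram symmetry.
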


\begin{proof}
\textit{Step 1:} We first show that if $(\overline{\Gr}^\lambda)^T \to \overline{\Gr}^\lambda$ is a complete intersection, then $\lambda=(n+1)k\varpi_1^{\vee}$ or $ \lambda = (n+1)k \varpi_{n}^{\vee}$ for $k \in \mathbb{N}$.

The case when $n=1$ is trivial, so we assume $n \ge 2$. Let $\lambda=a_1 \alpha_1^\vee+\dots +a_n\alpha_n^\vee$. Assume $a_1 \le a_n$ (otherwise, we can apply the outer-automorphism of $G$ defined by the unique nontrivial Dynkin diagram automorphism). The condition that $\lambda$ is dominant is equivalent to the following $n$ inequalities:
%\begin{equation*}
%	\begin{cases}
%		\langle \lambda,\alpha_1 \rangle=2a_1-a_2 \ge 0;\\
%		\langle \lambda,\alpha_i \rangle=2a_i-a_{i-1}-a_{i+1} \ge 0, \text{ for }1<i<n;\\
%		\langle \lambda,\alpha_1 \rangle=2a_n-a_{n-1} \ge 0.
%	\end{cases}
%\end{equation*}
\begin{equation*}\
	\langle \lambda,\alpha_k \rangle=2a_k-a_{k-1}-a_{k+1} \ge 0, \text{ for }1\le k\le n;
\end{equation*}
here, we set $a_0=a_{n+1}=0$. These inequalities are equivalent to 
\[a_{k+1}-a_k \le a_k-a_{k-1}, \text{ for }1\le k\le n,\]
so
\[a_{k}-a_{k-1} \le a_1-a_0=a_1, \text{ for }1\le k\le n+1.\]
Therefore, 
\begin{equation}\label{eq:ineq1}
	a_k=a_k-a_0=\sum_{i=1}^k a_i-a_{i-1}\le \sum_{i=1}^k a_1=k a_1, \text{ for }1\le k\le n,
\end{equation}
thus
\begin{equation}\label{eq:ineq2}
	\langle\lambda,2\rho\rangle =\sum_{k=1}^n 2a_k \le \sum_{k=1}^n 2k a_1=n(n+1)a_1=|\Phi_{A_{n+1}}|a_1.
\end{equation}
According to Proposition \ref{prop:min ai}, the righthand side of this inequality is the minimum number of generators of the ideal $I^T (\overline{W}^{\lambda}_0)$. We also know the lefthand side of this inequality is the dimension of $\overline{\Gr}^\lambda$. 
If \[(\overline{\Gr}^\lambda)^T \to \overline{\Gr}^\lambda,\] is $LCI$ then so is
\[(\overline{W}^{\lambda}_0)^T \rightarrow \overline{W}^{\lambda}_0.\] If we assume this latter map is $LCI$ then we must have an equality between $\langle \lambda, 2 \rho \rangle$ and the minimal number of generators of $I^T(\overline{W}^{\lambda}_0)= |\Phi_{A_{n}}| a_1$. This forces the inequalities \eqref{eq:ineq1} \eqref{eq:ineq2} to be equalities, thus
$a_k=k a_1, \text{ for }1\le k\le n,$
and $\lambda=a_1 (n+1)\varpi^{\vee}_1$.
If we assume $a_1 >a_n$, the proof proceeds along similar lines and leads to the conclusion that $\lambda= a_n(n+1) \varpi^{\vee}_n$.

%Firstly, we will inductively show that $a_k \ge a_1$ for all $1\le k \le n$. 

%Now we will inductively show that $a_k \le k a_1$ for all $1\le k\le n$. When $k=1$ there is nothing to prove. Assume $a_i \le i a_1$ for $i\le k$. From the $k$-th equation in \eqref{eq:inductive}, we know
%\[a_{k+1}\le 2 a_{k}-a_{k-1} \le 2 k a_1-\]
	\textit{Step 2:} We now show that if $\lambda = k(n+1) \varpi^{\vee}_1$ or $k(n+1) \varpi^{\vee}_n$ for $k \in \mathbb{N}$, then \[(\overline{\mathrm{Gr}}^{\lambda})^T \rightarrow \overline{\mathrm{Gr}}^{\lambda}\] is LCI. We assume $\lambda = k (n+1) \varpi^{\vee}_1$, with $\lambda = k(n+1) \varpi^{\vee}_n$ being proved similarly.
	%Showing $(\overline{\mathrm{Gr}}^{\lambda})^T \rightarrow \overline{\mathrm{Gr}}^{\lambda}$ is LCI is equivalent to showing, for each $t^{\mu} \in \overline{\mathrm{Gr}}^{\lambda}$, that $\mathbb{V}^{\bullet}(I^T(\overline{W}^{\lambda}_{\mu}))$ is classical. Thus, for $\mathfrak{m}$ the maximal ideal of $\mathbb{C}[\overline{W}^{\lambda}_{\mu}]$, what must be shown is that $dim (\mathfrak{m}/\mathfrak{m}^2)^{\neq 0}$= $dim \overline{W}^{\lambda}_{\mu}$. 
	 We again appeal to the work of \cite{KMWY} and \cite{KPZ}: $(T_{\mu} \overline{\mathrm{Gr}}^{\lambda}_{SL_{n+1}})^{\neq 0}$ is spanned by the images of root curves. Thus, there is a tangent vector $X_{\alpha-k\varpi_{r}}$ of $\mathbb{T}$-weight $\alpha-k\varpi_{r}$ in $T_{\mu}\overline{\mathrm{Gr}}^{\lambda}_{SL_{n+1}}$ if and only if $t^{\mu-(\langle \mu, \alpha \rangle +k) \alpha^{\vee}} \in \overline{\mathrm{Gr}}^{\lambda}$, and $t^{\mu-(\langle \mu, \alpha \rangle +k) \alpha^{\vee}} \neq t^{\mu}$, see \cite[Lem. 2.2.2]{ZhuDem}. This is equivalent to $V_{\lambda}(\mu-(\langle \mu, \alpha \rangle+k) \alpha^{\vee}) \neq 0$ and $\langle \mu, \alpha \rangle +k \neq 0$, where $V_{\lambda}$ is the irreducible representation of $PGL_{n+1}$ of highest weight $\lambda$. Thus we see that, up to reindexing $k$, $\dim (T_{\mu}\overline{\mathrm{Gr}}^{\lambda})^{\neq 0}$ is the cardinality of the set $(\alpha^{\vee}, k) \in \Phi^{\vee} \times \mathbb{Z}^{>0}$ such that $V_{\lambda}(\mu+k\alpha^{\vee}) \neq 0$.

For $V_{\lambda}(\mu) \neq 0$ let $RL^{\lambda}(\mu, \alpha^{\vee})= \operatorname{max}\{k| V_{\lambda}(\mu+k\alpha^{\vee}) \neq 0\}$.	Another way of phrasing the above is 
\[\dim (T_{\mu} \overline{\mathrm{Gr}}^{\lambda})^{\neq 0} = \sum_{\alpha^{\vee} \in \Phi^{\vee}_{SL_{n+1}} }RL^{\lambda}(\mu, \alpha^{\vee}).\]

Let $P_{\lambda}$ be the convex hull of $V_{\lambda}$ in $\mathfrak{h}^*_{\mathbb{R}}$.
 Recall \cite{Kam} that the convex hull $P_{\lambda}$ is given by the following inequalities:
$\mu \in P_{\lambda}$ if and only if for all $w \in W$ and all $\varpi_i$ fundamental weights,

\[\langle \mu, w \varpi_i \rangle \geq \langle w_0 \lambda, \varpi_i \rangle.\]

Usually, it is difficult to predict, for $\mu$ integral in $V_{\lambda}$, which ``wall'' of the form 
\[\langle -, w \varpi_i \rangle = \langle w_0 \lambda, \varpi_i \rangle\]
the (co)root string $\mu+k \beta^{\vee}$ will hit, for $\beta^{\vee} \in \Phi^{\vee}$, $k \in \mathbb{Z}^{\geq 0}$; and by difficult to predict, we mean that this depends on $\mu$ inside $V_{\lambda}$. For $V_{\lambda}(\mu)$, where $V_{\lambda}$ has walls of the form $\langle -, w \varpi_i \rangle = \langle w_0 \lambda, \varpi_i \rangle$, the root string associated to $\beta^{\vee}$ and $\mu$ can intersect walls such that $\langle \beta^{\vee}, w \varpi_i \rangle <0$. See Figure \ref{fig1}.

\begin{figure}
\begin{center}
\begin{tikzpicture}[>=triangle 45]

\draw[fill=lightgray] (2.51,-.86) -- (.47,2.58) -- (-.47,2.58) -- (-2.51,-.86) -- (-2.02,-1.73205) -- (2.02,-1.73205) -- (2.51,-.86);
%\draw[thick,red,->] (0,0) -- (1,0);
%\draw[thick,red,->] (0,0) -- (-0.5,0.866025);

\draw[thick,dashed,purple] (0,2.58) -- (2.24,-1.27);
\draw[thick,dashed,purple] (-.72,2.15) -- (.72,2.15);
\draw[thick,dashed,purple] (.47,2.58) -- (-2.02,-1.73);
%\fill (-0.5,-0.866025) circle [radius=0.07];
\fill (0,0) circle [radius=0.07];
\fill (.24,2.15) circle[radius=0.07];

\draw[thick,dashed,blue] (-2,0) -- (2,0);
\draw[thick,dashed,blue] (-1,1.73) -- (1,-1.73);
\draw[thick,dashed,blue] (1.02,1.73) -- (-1.02,-1.73);

\fill (.47,2.58) circle [radius=0.07];
\fill (-.47,2.58) circle [radius=0.07];
\fill (-2.51,-.86) circle [radius=0.07];
\fill (-2.02,-1.73205) circle [radius=0.07];
\fill (2.02,-1.73205) circle [radius=0.07];
\fill (2.51,-.86) circle [radius=0.07];
%\node at (-1.5,-0.866025) {$\times$};
%\node at (1.5,-0.866025) {$\times$};
%\node at (1.2,1.93205) {\small $e$};
%\node at (-1.2,1.93205) {\small $s_1$};
%\node at (1.2,-1.98205) {\small $s_2s_1$};
%\node at (-1.2,-1.98205) {\small $w_0$};
%\node at (2.3,0) {\small $s_2$};
%\node at (-2.5,0) {\small $s_1s_2$};

\node at (0.2,0.2) {\tiny $0$};
\node at (.24,1.76) {$\mu$};

%\node at (-1.75,-1.196025) {$\mu''$};

\end{tikzpicture}
\caption{\label{hey1} \small For $V_{\lambda}$, with $\lambda$ not of the form $(n+1)k \varpi^{\vee}_1$ or $(n+1)k\varpi^{\vee}_n$, (co)root lines through $\mu$ may hit different walls depending on the weight $\mu$ in $V_{\lambda}.$}
\label{fig1}
\end{center}
\end{figure}

The special cases of $P_{(n+1)k \varpi^{\vee}_1}$ (as well as $P_{(n+1)k \varpi^{\vee}_n})$, on the other hand, are very degenerate; the inequalities corresponding to $i, w$ for $i >1$ are redundant. The convex hull $P_{(n+1) k \varpi^{\vee}_1}$ is given by the following inequalities:
$\mu \in P_{(n+1)k \varpi^{\vee}_1}$ if and only if 
\[\langle \mu, \varpi_1 \rangle \geq -k,\]
\[\langle \mu, s_1 \varpi_1 \rangle \geq -k,\]
\[ \langle \mu, s_2 s_1 \varpi_1 \rangle \geq -k,\]
\[\dots \]
\[\langle \mu, s_n s_{n-1} \dots s_1 \varpi_1 \rangle \geq -k.\]

 Consider $V_{(n+1)k \varpi^{\vee}_1}$. Given any $\beta^{\vee} \in \Phi^{\vee}_{SL_{n+1}}$, there exists a unique weight of the form $w \varpi_1$ such that $\langle \beta^{\vee}, w \varpi_1 \rangle <0$. Thus, for $V_{(n+1)k \varpi^{\vee}_1}(\mu) \neq 0$, the ``wall'' hit by the ray  $\mu+k \beta^{\vee}$, $\beta \in \Phi^{\vee}$, $k \in \mathbb{R}^{\geq 0}$ is independent of $\mu$ in $V_{(n+1)k \varpi^{\vee}_1}$. This allows us to obtain a closed expression for all the (co)rootlines through $\mu$ in $V_{(n+1) k \varpi^{\vee}_1}$.

Let us be more specific. We may partition  $\Phi^{\vee}_{SL_{n+1}}$ according to ``column''. The reader may verify that the coroots $-\alpha^{\vee}_1, -\alpha^{\vee}_1-\alpha^{\vee}_2, \dots -\theta^{\vee}$ from the first column all pair negatively with $\varpi_1$. 

Coroots from the second column $\alpha^{\vee}_1, -\alpha^{\vee}_2, -\alpha^{\vee}_2-\alpha^{\vee}_3, \dots$ all pair negatively with $s_1 \varpi_1$.

Coroots from the third column $\alpha^{\vee}_1+\alpha^{\vee}_2, \alpha^{\vee}_2, -\alpha^{\vee}_3, \dots$ all pair negatively with $s_2 s_1 \varpi_1$ and so on and so forth.

For $V_{(n+1)k\varpi^{\vee}_1}(\mu) \neq 0$, and $\beta^{\vee}$ any coroot from the first column, we have  \[RL^{(n+1)k \varpi^{\vee}_1}(\mu, \beta^{\vee})=\langle \mu, \varpi_1\rangle+k.\] 
For any coroot $\gamma^{\vee}$ from the second column, we have 
\[RL^{(n+1)k \varpi^{\vee}_1}(\mu, \gamma^{\vee})= \langle \mu, s_1\varpi_1 \rangle +k.\]
For any coroot $\eta^{\vee}$ from the third column, we have 
\[RL^{(n+1)k \varpi^{\vee}_1}(\mu, \eta^{\vee}) = \langle \mu, s_2 s_1 \varpi_1 \rangle +k,\]
and so on and so forth, \textit{and these expressions hold for any nontrivial $\mu$.}

Summing over all like terms we obtain for any nontrivial weight $\mu$ of $V_{(n+1)k \varpi^{\vee}_1}$,

\begin{align*}
\dim (T_{\mu}\overline{\mathrm{Gr}}^{(n+1)k \varpi_1^{\vee}})^{\neq 0} &= \sum_{\beta^{\vee} \in \Phi^{\vee}_{SL_{n+1}}} RL^{(n+1)k \varpi^{\vee}_1}(\mu, \beta^{\vee}) \\
& = \sum^{n+1}_{i=1} \sum_{ \beta^{\vee} \in \text{ column }i} RL^{(n+1)k \varpi^{\vee}_1}(\mu, \beta^{\vee}) \\
& =\sum_{i=1}^{n+1} \sum_{\beta^{\vee} \in \text{ column }i} \langle \mu, s_i \dots s_1 \varpi_1 \rangle + k \\
&= n(\langle \mu, \varpi_1 \rangle +k)+n(\langle \mu, s_1 \varpi_1 \rangle +k) + \dots + n(\langle \mu , s_n s_{n-1} \dots s_1 \varpi_1 \rangle+k) \\
& = (n+1) \cdot n \cdot k.\\
\end{align*}

In particular this dimension is independent of $\mu$. Moreover 
\[\dim \overline{\mathrm{Gr}}_{SL_{n+1}}^{(n+1)k \varpi^{\vee}_1}= \langle 2 \rho, (n+1)k \varpi^{\vee}_1 \rangle =(n+1) \cdot n \cdot k.\] Thus for $\lambda = (n+1)k\varpi^{\vee}_1$we have  \[(U^{\lambda}_{\mu})^T \rightarrow U^{\lambda}_{\mu}\] is a complete intersection for each $\mu$ and for $\lambda= (n+1)k \varpi^{\vee}_1$,\[(\overline{\mathrm{Gr}}^{\lambda})^T \rightarrow \overline{\mathrm{Gr}}^{\lambda}\] is a complete intersection. The case of $\lambda = (n+1)k \varpi^{\vee}_n$ is proved \textit{mutatis mutandis}.
\end{proof}

\begin{figure}
\begin{center}
\begin{tikzpicture}[>=triangle 45]

\draw[fill=lightgray] (3,1.72) -- (-3,1.72) -- (0,-3.42);
%\draw[thick,red,->] (0,0) -- (1,0);
%\draw[thick,red,->] (0,0) -- (-0.5,0.866025);

\draw[thick,dashed,purple] (2.6,1) -- (-2.6,1);
\draw[thick,dashed,purple] (0,1.72) -- (1.6,-.8);
\draw[thick,dashed,purple] (.9,1.72) -- (-1.25,-1.45);
%%\fill (-0.5,-0.866025) circle [radius=0.07];
\fill (.45,1) circle [radius=0.07];
\node at (.45,.7) {$\mu$};
%\fill (.24,2.15) circle[radius=0.07];
%
%\draw[thick,dashed,blue] (-2,0) -- (2,0);
%\draw[thick,dashed,blue] (-1,1.73) -- (1,-1.73);
%\draw[thick,dashed,blue] (1.02,1.73) -- (-1.02,-1.73);
%
%\fill (.47,2.58) circle [radius=0.07];
%\fill (-.47,2.58) circle [radius=0.07];
%\fill (-2.51,-.86) circle [radius=0.07];
%\fill (-2.02,-1.73205) circle [radius=0.07];
%\fill (2.02,-1.73205) circle [radius=0.07];
%\fill (2.51,-.86) circle [radius=0.07];
%\node at (-1.5,-0.866025) {$\times$};
%\node at (1.5,-0.866025) {$\times$};
%\node at (1.2,1.93205) {\small $e$};
%\node at (-1.2,1.93205) {\small $s_1$};
%\node at (1.2,-1.98205) {\small $s_2s_1$};
%\node at (-1.2,-1.98205) {\small $w_0$};
%\node at (2.3,0) {\small $s_2$};
%\node at (-2.5,0) {\small $s_1s_2$};

%\node at (0.2,0.2) {\tiny $0$};
%\node at (.24,1.76) {$\mu$};

%\node at (-1.75,-1.196025) {$\mu''$};

\end{tikzpicture}
\caption{\label{hey1} \small For highest weights of the form $k(n+1) \varpi^{\vee}_1$, and $V_{\lambda}(\mu) \neq 0$, the root strings through $\mu$ in direction $-\alpha^{\vee}_1, -\alpha^{\vee}_1-\alpha^{\vee}_2, \dots$ will always hit the wall associated to $\varpi_1$; the rootlines through $\mu$ in direction $\alpha^{\vee}_1$,  $-\alpha^{\vee}_2, \dots$ will always hit the wall associated to $s_1 \varpi_1$, etc, independently of $\mu$. }
\end{center}
\end{figure}

\section{Remarks}\label{remarks}

An important impetus for this work was symplectic duality. Let $\mathcal{M}(\lambda,\mu)$ be the Nakajima quiver variety associated to the pair $(\lambda,\mu)$, and $\mathcal{M}_0 (\lambda,\mu)$ be the spectrum of functions on the Nakajima quiver variety; the natural map
\[\pi:\mathcal{M}(\lambda,\mu) \rightarrow \mathcal{M}_0(\lambda, \mu).\]
is a symplectic resolution. It's expected that $\mathcal{M}_0 (\lambda,\mu)$ serves as the symplectic dual of the affine grassmannian slice $\overline{W}_\mu^\lambda$. The Hikita conjecture in this case predicts an isomorphism of graded rings
\begin{equation}\label{eq:Hikita}
	\C[(\overline{W}_\mu^\lambda)^T]\cong \mathsf{H}^\bullet_{\mathrm{sing}}(\mathcal{M}(\lambda,\mu),\C).
\end{equation}
This isomorphism is proved for $G$ of type $A,D,E_6$ in \cite[Section 8]{KTWWY}.

In our work, $H^0(\mathscr{O}_{\mathbb{V}^{\bullet}(I^T(\overline{W}^{\lambda}_{\mu}))})$ precisely recovers $\mathbb{C}[\overline{W}^{\lambda}_{\mu}]/I^T(\overline{W}_\mu^\lambda)=\C[(\overline{W}_\mu^\lambda)^T]$, and other cohomologies $H^i(\mathscr{O}_{\mathbb{V}^{\bullet}(I^T(\overline{W}^{\lambda}_{\mu}))})$ produce $H^0(\mathscr{O}_{\mathbb{V}^{\bullet}(I^T(\overline{W}^{\lambda}_{\mu}))})=\C[(\overline{W}_\mu^\lambda)^T]$-modules. It would be interesting to construct these modules on the symplectic dual side.

For example, Corollary \ref{pcare} gives an isomorphism of $\C[(\overline{W}_\mu^\lambda)^T]$-modules (up to $\mathbb{T}$-grading shifts)
\begin{equation}\label{eq:dual}
	\left(\C[(\overline{W}_\mu^\lambda)^T]\right)^*
	%(=\mathrm{Hom}_\C (\C[(\overline{W}_\mu^\lambda)^T],\C))
	\cong H^{d-n}(\mathscr{O}_{\mathbb{V}^{\bullet}(I^T(\overline{W}^{\lambda}_{\mu}))}),
\end{equation}
where $d=\dim \overline{W}_\mu^\lambda$, $n=\dim (T_0^* \overline{W}_\mu^\lambda)^{\neq 0}$. On the symplectic dual side, let $\mathcal{L}(\lambda,\mu):=\pi^{-1}(0)$ be the Lagrangian fiber; this is an equidimensional complex projective variety, which is homotopic to $\mathcal{M}(\lambda,\mu)$ in the classical topology (see \cite[Theorem 3.21]{Nak}). Then one can identify
\[(\mathsf{H}^\bullet_{\mathrm{sing}}(\mathcal{M}(\lambda,\mu),\C))^*\cong \mathsf{H}_\bullet^{\mathrm{sing}}(\mathcal{M}(\lambda,\mu),\C)\cong \mathsf{H}_\bullet^{\mathrm{sing}}(\mathcal{L}(\lambda,\mu),\C).\]
 By the Hikita isomorphism \eqref{eq:Hikita} and the isomorphism \eqref{eq:dual}, we can identify 
\[ H^{d-n}(\mathscr{O}_{\mathbb{V}^{\bullet}(I^T(\overline{W}^{\lambda}_{\mu}))}) \cong \mathsf{H}_\bullet^{\mathrm{sing}}(\mathcal{L}(\lambda,\mu),\C)\footnotemark.\]
\footnotetext{We thank Vasily Krylov for his remarks on this point.}

We finally mention one corollary of our duality theorem \ref{thm2} on the symplectic dual side. 
\begin{corollary}
Assume $(\overline{W}^{\lambda}_{\mu})^T \rightarrow \overline{W}^{\lambda}_{\mu}$ is a complete intersection, or equivalently $\dim \overline{W}_\mu^\lambda=\dim(T_0\overline{W}_\mu^\lambda)^{\neq 0}$ by Proposition \ref{prop:basis}; we also assume Hikita conjecture is true for the pair $(\lambda,\mu)$. Then the Lagrangian fiber $\mathcal{L}(\lambda,\mu)$ is irreducible.
\end{corollary}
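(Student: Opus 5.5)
The plan is to derive irreducibility of $\mathcal{L}(\lambda,\mu)$ from a dimension count of its top homology. That count will come from the duality of Corollary \ref{pcare} in the complete intersection case, combined with the Hikita isomorphism \eqref{eq:Hikita}.

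\emph{Step 1.} Write $d=\dim \overline{W}^{\lambda}_{\mu}$ and $n=\dim (T_0\overline{W}^{\lambda}_{\mu})^{\neq 0}$. Under the complete intersection hypothesis $d=n$, so Corollary \ref{amplitude} forces the Koszul complex to have cohomology only in degree $0$. Corollary \ref{pcare} with $i=0$ then gives a $\mathbb{T}$-equivariant isomorphism
\[\C[(\overline{W}^{\lambda}_{\mu})^T]^* \simeq \C[(\overline{W}^{\lambda}_{\mu})^T]\Bigl(\mathrm{wt}\bigl(\det (T_0\overline{W}^{\lambda}_{\mu})^{\neq 0}\bigr)+\tfrac{d}{2}\varpi_r\Bigr).\]
Since $A_0=\C$ and $A$ is positively graded, the $\Gmrot$-weight-$0$ part of $B:=\C[(\overline{W}^{\lambda}_{\mu})^T]$ is one-dimensional, spanned by $1$. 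Let $N$ be the top $\Gmrot$-weight of $B$. Comparing $\Gmrot$-gradings on both sides (the twist is a pure shift) shows that the grading of $B$ is symmetric. This is Corollary \ref{sympoly}, and it gives $\dim B_N=\dim B_0=1$.

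\emph{Step 2.} Under Hikita, $B\cong \mathsf{H}^\bullet_{\mathrm{sing}}(\mathcal{M}(\lambda,\mu),\C)$ with $\Gmrot$-weight $j$ corresponding to cohomological degree $2j$. Since $\mathcal{M}(\lambda,\mu)$ retracts onto $\mathcal{L}(\lambda,\mu)$, this ring is $\mathsf{H}^\bullet(\mathcal{L}(\lambda,\mu),\C)$. The fiber $\mathcal{L}(\lambda,\mu)$ is a Lagrangian subvariety and is equidimensional of dimension $\tfrac12\dim\mathcal{M}(\lambda,\mu)=\tfrac{d}{2}$. So its cohomology vanishes above degree $d$, and $\mathsf{H}^{d}(\mathcal{L},\C)$ has dimension equal to the number of irreducible components. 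This holds because for a projective variety of pure complex dimension $m$, the top Borel--Moore homology $\mathsf{H}_{2m}$ is freely spanned by the fundamental classes of the components, and for a compact space it coincides with ordinary $\mathsf{H}_{2m}$.

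\emph{Step 3.} It remains to show that the top weight $N$ equals $d/2$, so that $B_N$ corresponds to $\mathsf{H}^{d}$. This is the main obstacle. I would first check it by grading: the nonzero $\mathsf{H}^d$ forces $N\ge d/2$, and vanishing above degree $d$ gives $N\le d/2$. Alternatively, it follows from matching the duality twist $\tfrac d2\varpi_r$ with the socle degree, but the vanishing/nonvanishing argument suffices.

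Then $\dim \mathsf{H}^{d}(\mathcal{L},\C)=\dim B_{d/2}=\dim B_N=1$, so $\mathcal{L}(\lambda,\mu)$ has exactly one irreducible component and is irreducible.
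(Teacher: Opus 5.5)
Your skeleton matches the paper's proof: the palindromic Poincar\'e polynomial from Corollary \ref{sympoly} is transported through the Hikita isomorphism to $\mathsf{H}^\bullet_{\mathrm{sing}}(\mathcal{L}(\lambda,\mu),\C)$, and the top-degree cohomology of an equidimensional projective variety counts its components. However, Step 2 contains a false identification. You write $\tfrac12\dim\mathcal{M}(\lambda,\mu)=\tfrac d2$ with $d=\dim\overline{W}^\lambda_\mu$, but $\mathcal{M}(\lambda,\mu)$ lives on the other side of symplectic duality, and its dimension is unrelated to $d$ in general. This fails already inside the hypotheses of the corollary. Take $G=SL_2$, $\lambda=n\alpha_1^\vee$, $\mu=0$, which is a complete intersection by Section \ref{examples}, with $d=2n$. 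Then $\mathcal{M}(\lambda,\mu)=T^*\mathrm{Gr}(n,2n)$ and $\mathcal{L}(\lambda,\mu)=\mathrm{Gr}(n,2n)$ has dimension $n^2$. Correspondingly $B$ has Poincar\'e polynomial $\binom{2n}{n}_q$, so its top $\Gmrot$-weight is $N=n^2\neq n=d/2$ for $n\ge 2$. So the claims ``$\mathsf{H}^{>d}(\mathcal{L},\C)=0$'' and ``$\mathsf{H}^{d}$ counts components'' are false, and so is the target of Step 3, ``$N=d/2$''. Your suggested alternative via the twist $\tfrac d2\varpi_r$ is also wrong. The twist in Corollary \ref{pcare} also contains $\mathrm{wt}(\det(T_0\overline{W}^\lambda_\mu)^{\neq 0})$, and matching gradings actually gives $N=\sum_i e_i-\tfrac d2$, where $e_i$ are the $\Gmrot$-weights of the minimal generators. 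In the $SL_2$ example this is $n(n+1)-n=n^2$.

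The repair is immediate, because your argument never needs the value of $\dim\mathcal{L}$. Set $m:=\dim\mathcal{L}(\lambda,\mu)$, which equals $\tfrac12\dim\mathcal{M}(\lambda,\mu)$. Since $\mathcal{L}$ is projective of pure dimension $m$, you have $\mathsf{H}^{2m}(\mathcal{L},\C)\neq 0$ and $\mathsf{H}^{>2m}(\mathcal{L},\C)=0$. Under Hikita (weight $j$ corresponds to degree $2j$), your own vanishing/nonvanishing argument then gives $N=m$, and hence $\dim\mathsf{H}^{2m}(\mathcal{L},\C)=\dim B_N=\dim B_0=1$. With $d/2$ replaced by $m$ throughout Steps 2--3, your proof coincides with the paper's. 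It also makes explicit a step the paper leaves tacit: that the top degree of the palindromic polynomial is $2\dim\mathcal{L}(\lambda,\mu)$.
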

In this circumstance, by \cite[Theorem 10.2]{Nak}, the $\mu$-weight space in the $\mathfrak{g}^\vee$-highest weight representation $V_{\lambda}$ has dimension $1$.
\begin{proof}
By Corollary \ref{sympoly}, $\dim \overline{W}_\mu^\lambda=\dim(T_0\overline{W}_\mu^\lambda)^{\neq 0}$ implies the Poincare polynomial of $ \C[(\overline{W}_\mu^\lambda)^T]\cong \mathsf{H}^\bullet_{\mathrm{sing}}(\mathcal{M}(\lambda,\mu),\C)=\mathsf{H}^\bullet_{\mathrm{sing}}(\mathcal{L}(\lambda,\mu),\C)$ is palindromic. Then $$\dim \mathsf{H}^{2 \dim \cL(\lambda,\mu)}_{\mathrm{sing}}(\mathcal{L}(\lambda,\mu),\C)=\dim \mathsf{H}^0_{\mathrm{sing}}(\mathcal{L}(\lambda,\mu),\C)=1.$$
Since $\mathcal{L}(\lambda,\mu)$ is equidimensional, we deduce that $\mathcal{L}(\lambda,\mu)$ is irreducible.

\end{proof}

\bibliographystyle{plain}
\bibliography{sn-bibliography}% common bib file
%% if required, the content of .bbl file can be included here once bbl is generated
%%\input sn-article.bbl

\end{document}